\documentclass{amsart}
\usepackage[utf8]{inputenc}

\usepackage{fullpage}

	\usepackage{amsmath}
 \usepackage{amsfonts}
	\usepackage{amssymb}
	\usepackage{amsthm}
	\usepackage{braket}
	\usepackage{xcolor}
\usepackage[shortlabels]{enumitem}
\usepackage{stmaryrd}

 \usepackage{float}
	\usepackage{graphicx}
	\usepackage{hyperref}
    \usepackage{cleveref}
	\usepackage{ifsym}
	\usepackage{indentfirst}
	\usepackage{mathrsfs}
	\usepackage{mathtools}
	\usepackage{proof}
	\usepackage{qtree}
	\usepackage{setspace}
	\usepackage{tensor}
 \usepackage{tabularx}
	\usepackage{tikz}
	\usepackage{tikz-cd}
	\usepackage{tabstackengine}
        \usepackage{cancel}

\usepackage{nameref}

\usepackage{scalerel}

\makeatletter
\newsavebox{\@abr}
\newcommand{\llangle}[1][]{\savebox{\@abr}{\(\m@th{#1\langle}\)}%
  \mathopen{\copy\@abr\mkern2mu\kern-0.9\wd\@abr\usebox{\@abr}}}
\newcommand{\rrangle}[1][]{\savebox{\@abr}{\(\m@th{#1\rangle}\)}%
  \mathclose{\copy\@abr\mkern2mu\kern-0.9\wd\@abr\usebox{\@abr}}}

  \newsavebox{\@sbr}
\newcommand{\llsquare}[1][]{\savebox{\@sbr}{\(\m@th{#1{[}}\)}%
  \mathopen{\copy\@sbr\mkern3mu\kern-0.9\wd\@sbr\usebox{\@sbr}}}
\newcommand{\rrsquare}[1][]{\savebox{\@sbr}{\(\m@th{#1{]}}\)}%
  \mathclose{\copy\@sbr\mkern3mu\kern-0.9\wd\@sbr\usebox{\@sbr}}}
\makeatother

 	\setstretch{1.2}

\SetSymbolFont{stmry}{bold}{U}{stmry}{m}{n}

	\providecommand{\corollaryname}{Corollary}
	\providecommand{\definitionname}{Definition}
	\providecommand{\examplename}{Example}
	\providecommand{\lemmaname}{Lemma}
	\providecommand{\propositionname}{Proposition}
	\providecommand{\remarkname}{Remark}
	\providecommand{\theoremname}{Theorem}
	\providecommand{\setupname}{Setup}
	\providecommand{\conjecturename}{Conjecture}
	\providecommand{\questionname}{Question}
	\providecommand{\objectivename}{Objective}
	\providecommand{\aimname}{Aim}
	\providecommand{\notationname}{Notation}

	\theoremstyle{plain}
		\newtheorem{thm}{\protect\theoremname}[section] 
		\newtheorem{prop}[thm]{\protect\propositionname}
		\newtheorem{lem}[thm]{\protect\lemmaname}

	\theoremstyle{definition}
		\newtheorem{defn}[thm]{\protect\definitionname}
		\newtheorem{example}[thm]{\protect\examplename}
		\newtheorem{setup}[thm]{\protect\setupname}

		\newtheorem{notn}[thm]{\protect\notationname}

	\theoremstyle{remark}
		\newtheorem{rem}[thm]{\protect\remarkname}
		
	\numberwithin{figure}{section}
	\numberwithin{equation}{section}

	\usetikzlibrary{matrix,arrows,decorations.pathmorphing,positioning,decorations.pathreplacing}
	\tikzset{commutative diagrams/.cd, 
		mysymbol/.style = {start anchor=center, end anchor = center, draw = none}}
    \tikzset{
    labl/.style={anchor=north, rotate=90, inner sep=2mm}
    }

	\tikzcdset{every label/.append style = {font = \footnotesize}}
	\tikzcdset{scale cd/.style={every label/.append style={scale=#1},
    cells={nodes={scale=#1}}}}


		\newcommand{\rad}{\operatorname{rad}\nolimits}

		\newcommand{\cok}{\operatorname{coker}\nolimits}
		\newcommand{\im}{\operatorname{im}\nolimits}
		\newcommand{\coim}{\operatorname{coim}\nolimits}

\newcommand{\lMod}[1]{{#1}\operatorname{\hspace{-0.4mm}-\hspace{0.2mm}\mathsf{Mod}}\nolimits}

\newcommand{\lRel}[1]{{#1}\operatorname{\hspace{-0.4mm}-\hspace{0.2mm}\mathsf{Rel}}\nolimits}
\newcommand{\lRep}[1]{{#1}\operatorname{\hspace{-0.4mm}-\hspace{0.2mm}\mathsf{Rep}}\nolimits}

  \newcommand{\Hom}[1]{\tensor*[]{\operatorname{Hom}}{_{#1}}}
  \newcommand{\End}[1]{\tensor*[]{\operatorname{End}}{_{#1}}}

 	\newcommand{\jacob}{J}

    \newcommand{\Kron}{\text{\scalebox{1.4}{$\varkappa$}}}
	
    \newcommand{\nab}[1]{\nabla_{\hspace{-0.75mm}#1}}
 \newcommand{\CA}{\mathcal{A}}
\newcommand{\CB}{\mathcal{B}}
\newcommand{\CC}{\mathcal{C}}

\newcommand{\CF}{\mathcal{F}}
\newcommand{\CG}{\mathcal{G}}

\newcommand{\CS}{\mathcal{S}}
\newcommand{\CT}{\mathcal{T}}

\newcommand{\graph}[1]{\operatorname{graph}(#1)}

\newenvironment{acknowledgements}{
		\begin{abstract}} {\end{abstract}}

\newcommand*\circled[1]{\tikz[baseline=(char.base)]{
  \node[shape=circle,draw,inner sep=1pt] (char) {
  \scaleobj{0.75}{#1}
  }}}
  \newcommand{\Rceil}{\scalebox{0.8}{$\Omega$}}
  \newcommand{\Rfloor}{\scalebox{0.8}{$\boldsymbol{\mho}$}}
  
\setcounter{MaxMatrixCols}{20}

\newcommand{\idfunc}[1]{\boldsymbol{1}_{#1}}
\newcommand{\tens}[1]{{#1}^{\circled{\phantom{\hspace{1mm}}}}}
\newcommand{\homs}[1]{{#1}^{_{\square}}}

\newcommand{\exact}[3]{\big({}_{\mathcal{#2}}\hspace{-0.8mm}\overset{{}_{{#1}}}{*}_{ \mathcal{#3}}\big)}

\makeatother

\begin{document}
\title{Linear relations over commutative rings}
\author{Raphael Bennett-Tennenhaus}
\address{Fakult\"at f\"ur Mathematik, Universit\"at Bielefeld, 33501 Bielefeld, Germany}
\email{raphaelbennetttennenhaus@gmail.com}

\subjclass[2010]{Primary 16D70, Secondary 18E40}

\keywords{Definable category, linear relation, linearly compact module, torsion pair}

\thanks{}

\begin{abstract}
We consider the category of  linear relations over an arbitrary commutative ring, and identify it as a subcategory of the category of Kronecker representations.  
We observe that this subcategory forms a definable, faithful and hereditary torsion-free class. 
We also generalise  results used in the functorial filtrations method, known before only in case the ground ring is a field. 
In particular, our results strictly generalise what the so-called the `covering' and `splitting' properties from this method.  
\end{abstract}
\maketitle

\section{Introduction}

In representation theory the notions of finite, tame and wild representation type play a central role. 
The representation type of a finite-dimensional algebra over an algebraically closed field is determined by how one can parametrize the isoclasses of indecomposable finite-dimensional modules.
Said type is: finite if there are finitely many isoclasses;  tame if, for each fixed dimension, the isoclasses can be described by a finite number of one-parameter families; and wild if the problem of classifying these isoclasses contains the problem of finding a normal form of pairs of square matrices under simultaneous conjugation by a non-singular matrix. 

A theorem of Drozd says that these types are mutually exclusive, and that the type of any such algebra must be finite, tame, or wild. 
A theorem of Gabriel says that the type of a path algebra over a field is finite or tame if and only if the underlying graph of the quiver is classical or extended Dynkin $\mathbb{A}\text{-}\mathbb{D}\text{-}\mathbb{E}$. 
A prototypical example of a tame algebra is the path algebra $K\Kron$ over a field $K$ where $\Kron=\begin{tikzcd}
    t
    \arrow[r, shift left=0.75]
    \arrow[r, shift right=0.75]
    &
    h.
\end{tikzcd}$

In what follows we study more generally the path algebra $R\Kron$ where $R$ is a commutative ring. 
Modules over $R\Kron$ have a close connection to the concept of a \emph{linear relation}. 
For left $R$-modules $L$ and $M$, an \emph{$R$-linear relation from}  $L$ \emph{to}  $M$ is a submodule $C$ of $L\oplus M$. 
A prototypical example of an $R$-linear relation  is $\mathrm{graph}(\theta)\coloneqq\{(\ell,\theta(\ell))\mid \ell\in L\}$ for some $\theta\in\Hom{R}(L,M)$. 
An $R$-linear relation from $M$ to $M$ is referred to as an $R$-linear relation \emph{on} $M$. 
Objects in the category $\lRel{R}$ are  pairs $(M,C)$ where $C$ is an $R$-linear relation on $M$. 
Morphisms $(L,B)\to (M,C)$ are defined by $R$-linear maps $f\colon L\to M$ such that $(f(x),f(y))\in C$ whenever $(x,y)\in B$. 
In  \Cref{thm-main-general-intro} we study homological properties of this category. 
\begin{thm}
\label{thm-main-general-intro}   
$\lRel{R}$ is equivalent to a subcategory of $\lMod{R\Kron}$ that is precovering and closed under extensions,  limits, filtered colimits and coproducts and contains the regular representation.
Consequently, $\lRel{R}$ is quasi-abelian and  a covering, enveloping, definable, faithful and hereditary torsion-free class in $\lMod{R\Kron}$. 
\end{thm}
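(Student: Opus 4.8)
The plan is to realise $\lRel{R}$ as an explicit full subcategory $\CX$ of $\lMod{R\Kron}$ and then to read off every assertion from elementary closure properties of $\CX$ together with standard facts about definable subcategories and torsion-free classes. To identify $\CX$, I would define a functor $F\colon\lRel{R}\to\lMod{R\Kron}$ sending an object $(M,C)$ to the Kronecker representation with $C$ at the tail vertex, $M$ at the head vertex, and the two restricted coordinate projections $p_1,p_2\colon C\to M$ as structure maps, and sending a morphism $f$ to the pair $(f|_C,f)$. This is visibly a well-defined functor; it is faithful because a morphism in its image is determined by its head component; and it is full because, given any representation morphism $(\varphi_t,\varphi_h)\colon F(L,B)\to F(M,C)$, the identities $p_i\varphi_t=\varphi_h p_i$ force $\varphi_t=(\varphi_h\oplus\varphi_h)|_B$, whence $(\varphi_h\oplus\varphi_h)(B)\subseteq C$ and $\varphi_h$ defines a morphism $(L,B)\to(M,C)$ in $\lRel{R}$ with $F(\varphi_h)=(\varphi_t,\varphi_h)$. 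A representation $V$ with structure maps $\alpha,\beta\colon V_t\to V_h$ lies in the essential image of $F$ precisely when $(\alpha,\beta)\colon V_t\to V_h\oplus V_h$ is a monomorphism, in which case $V\cong F\big(V_h,\im(\alpha,\beta)\big)$. Hence $F$ is an equivalence onto the full subcategory $\CX$ of those representations for which $(\alpha,\beta)$ is monic, and it suffices to prove the theorem for $\CX$.

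The next step is to record the closure properties of $\CX$. The defining condition is inherited by every subrepresentation, because the structure maps of a subrepresentation are restrictions of the ambient ones and a restriction of a monomorphism has zero kernel; so $\CX$ is closed under subobjects, in particular under pure subobjects. Limits in $\lMod{R\Kron}$ are computed vertexwise and kernels commute with limits, so a limit of objects of $\CX$ again has monic $(\alpha,\beta)$; thus $\CX$ is closed under all limits, in particular products and kernels. Coproducts and filtered colimits are likewise vertexwise and exact, hence preserve the monomorphism $(\alpha,\beta)$, so $\CX$ is closed under coproducts and filtered colimits. If $0\to V'\to V\to V''\to 0$ is exact with $V',V''\in\CX$, the snake lemma applied to the evident diagram of structure maps gives $\ker(\alpha_V,\beta_V)=0$, so $\CX$ is closed under extensions. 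Finally the indecomposable projective left $R\Kron$-modules $P_h=(0\rightrightarrows R)$ and $P_t=(R\rightrightarrows R^2)$ — the latter with structure maps the two coordinate inclusions — both lie in $\CX$, so the regular representation $R\Kron\cong P_h\oplus P_t$ lies in $\CX$.

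It then remains to assemble the consequences. Since $\CX$ is closed under products, direct limits and pure subobjects it is a definable subcategory of $\lMod{R\Kron}$; definable subcategories of module categories are precovering, which settles the first sentence, and a precovering class closed under direct limits is covering. Since $\CX$ is closed under subobjects, products and extensions it is a torsion-free class by Dickson's criterion; in particular the inclusion $\CX\hookrightarrow\lMod{R\Kron}$ has a left adjoint, given by $M\mapsto M/t(M)$ where $t$ is the torsion radical, and this reflection is a $\CX$-envelope of $M$ (any map from $M$ to an object of $\CX$ kills $t(M)$, and left minimality is immediate from surjectivity of $M\to M/t(M)$), so $\CX$ is enveloping. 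A torsion-free class, being reflective and closed under kernels and extensions, carries a natural quasi-abelian structure, so $\CX$ is quasi-abelian. The torsion pair $({}^{\perp}\CX,\CX)$ is faithful because $R\Kron\in\CX$. And it is hereditary because $\CX$ is closed under essential extensions: if $X\subseteq Y$ is an essential subrepresentation with $X\in\CX$, then the subrepresentation of $Y$ concentrated at the tail vertex with underlying module $\{y\in Y_t:\alpha_Y(y)=\beta_Y(y)=0\}$ meets $X$ trivially, hence vanishes, so $\ker(\alpha_Y,\beta_Y)=0$ and $Y\in\CX$.

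The closure verifications above really are routine. The parts that need care are the fullness of $F$ and the identification of the essential image $\CX$, and the observation that closure under essential extensions — which is what ``hereditary'' requires — reduces to the behaviour of subrepresentations concentrated at the tail vertex. The only external inputs are the theory of definable subcategories (they are precovering, and covering when closed under direct limits) and the folklore fact that torsion-free classes are quasi-abelian; everything specific to the Kronecker quiver is concentrated in the single condition ``$(\alpha,\beta)$ is a monomorphism'' cutting out $\CX$.
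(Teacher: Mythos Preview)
Your proof is correct and follows the same overall strategy as the paper: identify $\lRel{R}$ with the full subcategory $\CX=\lRel{R}(\Kron)$ of representations with $(\alpha,\beta)$ monic, verify closure properties, and invoke standard torsion-theoretic and definability machinery. The functor $F$ you define is exactly the paper's $\nab{R}$, and your fullness and essential-image arguments coincide with \Cref{lem-essential-image}.

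There are a few local differences worth noting. For closure under filtered colimits you invoke exactness of filtered colimits in module categories in one line, whereas the paper spells out an explicit element-level argument (\Cref{lem-directed-colimits-closure}); your route is cleaner. For hereditariness you show $\CX$ is closed under essential extensions via the tail-concentrated subrepresentation $\ker(\alpha_Y,\beta_Y)$, while the paper instead computes the torsion class ${}^{\perp}\CX$ explicitly as the representations with $M_h=0$ (\Cref{lem-computing-torsion-class}) and observes this is closed under subobjects; both are standard equivalent characterisations. Finally, for faithfulness you check only that $R\Kron\in\CX$, which suffices, whereas the paper proves the stronger statement that every flat $R\Kron$-module lies in $\CX$ via the Morita-context description of flats (\Cref{lem-characterising-flat-and-injective-kronecker-mods}).
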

Considering linear relations as modules over the path algebra of $\Kron$ is not new, and served as a useful perspective in joint work with Crawley-Boevey \cite{BenCra2018} during an application of the  so-called \emph{functorial filtrations} classification method. 
This method has always relied on a careful analysis of linear relations defined by generators of the algebra, considered as subspaces of the underlying vector space of an arbitrary module. 
Especially useful in this analysis is to obtain a \emph{covering} statement and a \emph{splitting} statement; see for example \cite[Lemma~3.1]{BenCra2018} and \cite[Corollary~1.3]{BenCra2018} respectively. 
In the authors' thesis \cite{Ben2018} the functorial filtrations method was adapted to situations where the ground field is replaced by a complete noetherian local ring.


\emph{Splitting} concerns sequences of consecutive pairs that all lie in a common relation, and states that the subset of sequences that are eventually trivial has a complement, and that this complement satisfies certain closure operations. 
In the adaptation of the functorial filtrations method in \cite{Ben2018}, one observes that this splitting statement is both unnecessary for the purposes of classification, as well as being false when the local ring is not a field. 
However, the splitting result can be generalised suitably for classification. 

For an $R$-linear relation $C$ from $L$ to $M$ the \emph{inverse} is $C^{-1}= \{(m,\ell)\in M\oplus L\colon(\ell,m)\in C\}$.
For any $\ell\in L$ we let $C\ell=\{m\in M\colon (\ell,m)\in C\}$. 
Assuming $L=M$, in this notation define $R$-submodules of $M$ by
\begin{equation}
    \label{eqn-splitting-thm-submodules}
     \begin{array}{c}
C'' = \{ m\in M \colon  \text{$\exists \,(m_{n})\in M^{\mathbb{N}}$ with $m_{n+1}\in Cm_{n}$ for all $n$ and $m=m_{0}$}\},
\\
C' = \{ m\in M \colon  \text{$\exists \,(m_{n})\in M^{\mathbb{N}}$ with $m_{n+1}\in Cm_{n}$ for all $n$, $m=m_{0}$ and  $m_{n} = 0$ for $ n\gg0$}\},
    \\
       \begin{array}{cc}
 C^{\sharp} = C'' \cap (C^{-1})'',
 &
 C^{\flat} = C'' \cap (C^{-1})'+(C^{-1})'' \cap C'.
        \end{array}
        \end{array}
\end{equation}
 The aforementioned generalisation of the splitting result comprises \Cref{thm-intro-2}, which generalises  \cite[Lemmas~4.4,~4.5~and~4.6]{Cra2018}; see \Cref{lem-sharp-over-flat-functorial}, \Cref{lem-reductions-define-surjective-automorphism-respecting-maps}, \Cref{prop-reductions-generalise-relations-that-split},  \Cref{thm-existence-of-reductions} and \Cref{example-thesis-relation-ii}. 
\begin{thm}
\label{thm-intro-2} The assignment $(C,M)\mapsto C^{\sharp}/C^{\flat}$  gives a functor $\sharp/\flat\colon \lRel{R}\to \lMod{R[T,T^{-1}]}$  with a right inverse. 
Furthermore, if $R$ is local with jacobson radical $\jacob$, if $ \jacob M\subseteq (C^{-1})'+C'$ and if $C^{\sharp}/C^{\flat}$ is finitely generated, then there exists an $R[T,T^{-1}]$-module $X$,  free over $R$, and  an $R$-linear map $\rho\colon X\to M$ such that 
\[
\begin{array}{ccc}
     C^{\sharp}=C^{\flat}+\im(\rho),
     &
     \rho^{-1}(C^{\flat})=JX,
     &
     \rho(Tx)\in C\rho(x)\text{ for all }x\in X.
\end{array}
\]
Thus $\rho$ defines a morphism in $\lRel{R}$ which under $\sharp/\flat$ gives an exact sequence $0\to JX\to X\to C^{\sharp}/C^{\flat}\to 0$. 
\end{thm}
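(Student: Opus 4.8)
The plan is to build the functor $\sharp/\flat$ first, then treat the structural decomposition in the local, finitely-generated case. For the functor: given $(M,C)$, the submodules $C'', C'$ of $M$ defined in \eqref{eqn-splitting-thm-submodules} are functorial in the following sense — if $f\colon(L,B)\to(M,C)$ is a morphism in $\lRel{R}$, then a $B$-path $(\ell_n)$ in $L$ maps to a $C$-path $(f(\ell_n))$ in $M$, and a path that is eventually $0$ maps to one that is eventually $0$; hence $f(B'')\subseteq C''$, $f(B')\subseteq C'$, and likewise for the inverses since $f$ is also a morphism $(L,B^{-1})\to(M,C^{-1})$. Therefore $f(C^\sharp)\subseteq C^\sharp$ (wait, $f$ maps $B^\sharp$ into $C^\sharp$) and $f(B^\flat)\subseteq C^\flat$, so $f$ induces an $R$-linear map $B^\sharp/B^\flat\to C^\sharp/C^\flat$. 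The $R[T,T^{-1}]$-module structure comes from the observation that $T$ acts by "$m\mapsto$ any $m'\in Cm$": one must check this is well-defined modulo $C^\flat$ on $C^\sharp$, and invertible. Concretely, if $m\in C^\sharp=C''\cap(C^{-1})''$ then there is $(m_n)$ with $m_{n+1}\in Cm_n$, $m_0=m$, and also a path running "backwards", so one sets $T[m]=[m_1]$ using the forward path and checks independence of the choice of $m_1$ modulo $C^\flat$ — two choices $m_1, m_1'$ satisfy $m_1-m_1'\in C0\cap(C^{-1})''$-type conditions placing the difference in $C^\flat$; invertibility of $T$ uses that $m\in(C^{-1})''$ too. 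Functoriality of this $T$-action under morphisms is immediate from $\rho(Tx)\in C\rho(x)$-style compatibility. The right inverse is the functor sending an $R[T,T^{-1}]$-module $N$ to $(N,\graph(T))$, or rather to $(N, C_N)$ with $C_N=\{(n,Tn)\}$; one checks $C_N''=N$, $C_N'=0$, similarly for the inverse relation (using that $T$ is invertible), so $C_N^\sharp=N$ and $C_N^\flat=0$, giving $C_N^\sharp/C_N^\flat\cong N$ naturally. This part is essentially bookkeeping and I would cite the referenced lemmas (\Cref{lem-sharp-over-flat-functorial}, \Cref{lem-reductions-define-surjective-automorphism-respecting-maps}).

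For the second, substantive part, assume $R$ local with radical $J$, $JM\subseteq(C^{-1})'+C'$, and $V\coloneqq C^\sharp/C^\flat$ finitely generated over $R[T,T^{-1}]$. The goal is a "lift" $\rho\colon X\to M$ with $X$ free over $R$, an $R[T,T^{-1}]$-module, $C^\sharp=C^\flat+\im\rho$, $\rho^{-1}(C^\flat)=JX$, and $\rho(Tx)\in C\rho(x)$. The strategy: first produce a well-behaved set-theoretic or $R$-linear section of the surjection $C^\sharp\twoheadrightarrow V$ that is compatible with the relation, then upgrade it to a genuine module map out of a free module. Choose generators $v_1,\dots,v_r$ of $V$ over $R[T,T^{-1}]$. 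By the reduction machinery (\Cref{thm-existence-of-reductions}, \Cref{prop-reductions-generalise-relations-that-split}) one obtains, for each generator, a doubly-infinite sequence $(m^{(i)}_n)_{n\in\mathbb Z}$ in $M$ with $m^{(i)}_{n+1}\in Cm^{(i)}_n$ whose class at index $0$ is $v_i$; this uses $m^{(i)}_0\in C''\cap(C^{-1})''$. Let $X=\bigoplus_i R[T,T^{-1}]e_i$ be free on symbols $e_i$, with the obvious $T$-action, and define $\rho(T^n e_i)=m^{(i)}_n$, extended $R$-linearly. Then $\rho(Te_i)=m^{(i)}_1\in Cm^{(i)}_0=C\rho(e_i)$ and more generally $\rho(Tx)\in C\rho(x)$ by $R$-linearity of $C$ as a submodule of $M\oplus M$. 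Surjectivity $C^\sharp=C^\flat+\im\rho$ holds because the $v_i$ generate $V$ and $\rho$ hits representatives of all $T^n v_i$. The freeness of $X$ over $R$ is by construction.

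The identification $\rho^{-1}(C^\flat)=JX$ is where the hypotheses $R$ local and $JM\subseteq(C^{-1})'+C'$ do the work, and I expect this to be the main obstacle. The inclusion $JX\subseteq\rho^{-1}(C^\flat)$: for $x\in JX$, $\rho(x)\in JM\subseteq(C^{-1})'+C'$; one must then show $\rho(x)\in C^\flat=C''\cap(C^{-1})'+(C^{-1})''\cap C'$, which requires also knowing $\rho(x)\in C''$ and $\rho(x)\in(C^{-1})''$ — this follows because $\rho(x)$ lies on the doubly-infinite $C$-path assembled from the $m^{(i)}_n$ (a finite $R[T,T^{-1}]$-combination of shifted paths is again a doubly-infinite $C$-path, using linearity), so $\rho(x)\in C''\cap(C^{-1})''=C^\sharp$; intersecting with $(C^{-1})'+C'$ lands in $C^\flat$ by a short module computation (here one uses that $C^\sharp\cap\bigl((C^{-1})'+C'\bigr)=C^\flat$, a modular-law argument since $C'\subseteq C''$ and $(C^{-1})'\subseteq(C^{-1})''$). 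The reverse inclusion $\rho^{-1}(C^\flat)\subseteq JX$: if $x\notin JX$ then, $R$ being local, some coordinate of $x$ is a unit times $T^k$ plus lower/other terms, so the class $\rho(x)+C^\flat$ is a non-zero element of $V=\bigoplus R[T,T^{-1}]v_i$ that is not in the radical-submodule — more precisely, one shows $\rho$ descends to an isomorphism $X/JX\xrightarrow{\sim} V/JV$ compatible with $T$, so $\rho(x)\in C^\flat$ forces $x\in JX$; this uses Nakayama on the finitely generated module $V$ and the fact that the induced map $X\to V$ is surjective with the expected kernel after reducing mod $J$. The final sentence — that $\rho$ is then a morphism in $\lRel{R}$ from $(X,\graph(T))$ to $(M,C)$ whose image under $\sharp/\flat$ is the exact sequence $0\to JX\to X\to V\to 0$ — is a direct consequence: $\rho$ is a morphism since $\rho(Tx)\in C\rho(x)$; applying $\sharp/\flat$ and using $X^\sharp/X^\flat\cong X$, $M^\sharp/M^\flat\supseteq$ the relevant quotient, the induced map $X\to V$ has kernel exactly $\rho^{-1}(C^\flat)=JX$ and is onto by $C^\sharp=C^\flat+\im\rho$. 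I would close by pointing to \Cref{example-thesis-relation-ii} for why the finiteness and $JM\subseteq(C^{-1})'+C'$ hypotheses cannot be dropped.
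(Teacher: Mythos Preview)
Your treatment of the functor and its right inverse is correct and matches the paper's \Cref{lem-auto-of-sharp-over-flat} and \Cref{lem-sharp-over-flat-functorial} closely, as does your observation that $C^{\sharp}\cap\bigl((C^{-1})'+C'\bigr)=C^{\flat}$ (this is \Cref{lem-ideal-annihilating-quotient}).

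The second part, however, has a genuine gap. You take $X=\bigoplus_{i}R[T,T^{-1}]e_{i}$ free over $R[T,T^{-1}]$ on a finite set of $R[T,T^{-1}]$-generators of $V=C^{\sharp}/C^{\flat}$, and then claim $\rho^{-1}(C^{\flat})=JX$. But the hypothesis $JM\subseteq (C^{-1})'+C'$ forces $J\cdot V=0$, so $V$ is a \emph{finite-dimensional} $K$-vector space (here ``finitely generated'' means over $R$, equivalently over $K=R/J$). Your map $X\to V$ therefore factors through $X/JX\cong \bigoplus_{i}K[T,T^{-1}]\bar{e}_{i}$, which is infinite-dimensional over $K$ unless $V=0$. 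In particular the kernel of $X\to V$ contains all the $K[T,T^{-1}]$-relations among the $v_{i}$ --- for instance if $V=K$ with $T$ acting trivially, then $(T-1)e_{1}$ maps to $C^{\flat}$ yet has unit coefficients, so lies outside $JX$. Your Nakayama/``descends to an isomorphism $X/JX\to V/JV$'' step therefore cannot go through: the two sides have different $K$-dimensions.

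The paper's route avoids this by building $X$ of the correct size from the start. One takes $X$ to be a free $R$-module with $X/JX\cong V$ as $K$-spaces (a projective cover), then \emph{lifts} the automorphism $\theta$ of $V$ to an $R$-automorphism of $X$ using P\v{r}\'{\i}hoda's theorem (\Cref{thm-isos-have-lifts-that-are-isos}); this is how $X$ acquires its $R[T,T^{-1}]$-structure. The price is that now the na\"ive assignment $b_{\omega}\mapsto m_{\omega}$ only gives $\sum_{\lambda}r_{\omega\lambda}m_{\lambda}\in C^{\flat}+Cm_{\omega}$ rather than $Cm_{\omega}$ itself, so one cannot simply read off $\rho(Tx)\in C\rho(x)$. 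The paper corrects this by writing each error term $\ell_{\omega}\in C^{\flat}$ as a sum of two pieces supported on finite one-sided $C$-paths, propagating these along the matrix coefficients $r_{\omega\lambda}$ and their inverses $q_{\omega\lambda}$, and setting $\rho(b_{\omega})=z_{\omega}=m_{\omega}+\sum_{n}h_{\omega}^{n}$; a telescoping computation then shows $\sum_{\omega}r_{\gamma\omega}z_{\omega}\in Cz_{\gamma}$ exactly. Finiteness of the $R$-basis (hence of the index set $\Omega$) is used here to ensure the correction sums terminate. Both the lifting step and the correction step are missing from your proposal and are not recoverable from the free-$R[T,T^{-1}]$ ansatz.
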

\emph{Covering} is set in the context of a diagram in $\lMod{R}$ whose underlying graph is the binary tree. 
For a non-zero element in the root, the statement roughly says that one can find a non-trivial sequence of elements in a ray of the binary tree that cover this element in a minimal way. 
To be precise, let $\mathcal{B}$ be the quiver
\begin{equation}
\label{binary-tree}
    \begin{tikzcd}[column sep = 2.5cm, row sep = -5pt]
 &  &  & 111
\\
 &  & 11\arrow[ur, bend left = 4, "{\Rceil_{11}}"] &  \hspace{2cm}\cdots
\\
 &  &  & 110\arrow[ul, bend left = 4, "{\Rfloor_{11}}"]
\\
 & 1\arrow[uur, bend left = 4, "{\Rceil_{1}}"] & &  \hspace{2cm}\cdots
\\
 &  &  & 101
\\
 &  & 10\arrow[uul, bend left = 4, "{\Rfloor_{1}}"]\arrow[ur, bend left = 4, "{\Rceil_{10}}"] &  \hspace{2cm}\cdots
\\
 &  &  & 100\arrow[ul, bend left = 4, "{\Rfloor_{10}}"]
\\
\emptyset\arrow[uuuur, bend left = 4, "{\Rceil}"] &  &  &  \hspace{2cm}\cdots
\\
 &  &  & 011
\\
 &  & 01\arrow[ur, bend left = 4, "{\Rceil_{01}}"] & \hspace{2cm}\cdots
\\
 &  &  & 010\arrow[ul, bend left = 4, "{\Rfloor_{01}}"]
\\
 & 0\arrow[uuuul, bend left = 4, "{\Rfloor}"]\arrow[uur, bend left = 4, "{\Rceil_{0}}"] &  & \hspace{2cm}\cdots
\\
 &  &  & 001
\\
 &  & 00\arrow[uul, bend left = 4, "{\Rfloor_{0}}"]\arrow[ur, bend left = 4, "{\Rceil_{00}}"] & \hspace{2cm}\cdots
\\
 &  &  & 000\arrow[ul, bend left = 4, "{\Rfloor_{00}}"]
\end{tikzcd}
\end{equation}
By a \emph{ray} in $\mathcal{B}$ we mean a sequence $\overline{\sigma}=(\sigma^{0},\sigma^{1},\sigma^{2},\dots)$ of depth-$i$ vertices  $\sigma^{i}$, necessarily starting with $\sigma^{0}=\emptyset$, such that $\sigma^{i+1}$ is the extension of $\sigma^{i}$ to the right by $0$ or $1$. 
For example $(\emptyset,0,01,011,0110,01101,011010,\dots)$. 
Given such a ray and a $\mathcal{B}$-diagram $\mathscr{M}$ in $\lMod{R}$ we shorthand notation by writing $M[i]$ for the image of $\sigma^{i}$ under $\mathscr{M}$ and $\mu_{i}$ for the image of the unique arrow between  $\sigma^{i}$ and $\sigma^{i+1}$ under $\mathscr{M}$. 
For example 
\[
\begin{tikzcd}[column sep = 1.2cm]
M[0]
&
M[1]\arrow[l, "{\mu_{1}}"', "{\mathscr{M}(\Rfloor)}"]\arrow[r, "{\mu_{2}}", "{\mathscr{M}(\Rceil_{0})}"']
&
M[2]
\arrow[r, "{\mu_{3}}", "{\mathscr{M}(\Rfloor_{01})}"']
&
M[3]
&
M[4]\arrow[l, "{\mathscr{M}(\Rfloor_{011})}", "{\mu_{4}}"']
\arrow[r, "{\mu_{5}}", "{\mathscr{M}(\Rceil_{0110})}"']
&
M[5]
&
M[6]\arrow[l, "{\mathscr{M}(\Rfloor_{01101})}", "{\mu_{6}}"']
\arrow[r]
&
\cdots
\end{tikzcd}
\]
Writing $\widetilde{\mu}_{i}$ for the relation from $M[i-1]$ to $M[i]$ defined either by $\mathrm{graph}(M(\Rceil_{\sigma^{i}}))^{-1}$ or $\mathrm{graph}(M(\Rfloor_{\sigma^{i}}))$, one can generate submodules of $\mathscr{M}(\emptyset)=M[0]$ by  composing these relations. 
For example if $L\subseteq M[5]$ then 
\[
\widetilde{\mu}_{1}\widetilde{\mu}_{2}\widetilde{\mu}_{3}\widetilde{\mu}_{4}\widetilde{\mu}_{5}\widetilde{\mu}_{6}L=\left\{m_{0}\in M[0]\left\vert\, \begin{array}{c}
\text{there exists } m_{i}\in M[i]\text{ for }i=1,\dots,5\text{ such that }m_{5}\in L\text{ and}
\\
m_{0}=\mu_{1}(m_{1}),\,
\mu_{2}(m_{1})=m_{2},\,
\mu_{3}(m_{2})=m_{3}=\mu_{4}(m_{4}),\,
\mu_{5}(m_{4})=m_{5}
\end{array}\right.\right\}.
\]
In general we define $\widetilde{\mu}_{\leq 0}\coloneqq\graph{\idfunc{M[0]}}$ and, for each $n> 0$, we define $\widetilde{\mu}_{\leq n}\coloneqq \widetilde{\mu}_{1}\dots \widetilde{\mu}_{n}$. 

\Cref{thm-main-3-intro} below is essentially \Cref{thm-main-3} and is inspired by \cite[Lemmas~10.2~and~10.3]{Cra2018}.
%
%
\begin{thm}
\label{thm-main-3-intro}
   Let $R$ be noetherian, semilocal and complete in the adic topology given by its jacobson radical.  
    Let $\mathscr{M}$ be a $\mathcal{B}$-diagram in $\lMod{R}$ such that $\mathscr{M}(\sigma)$ is finitely generated and  $\im (\mathscr{M}(\Rfloor_{\sigma}))\subseteq \ker(\mathscr{M}(\Rceil_{\sigma}))$ for all $\sigma$. 
    If $0\neq m\in M(\emptyset)$ then there exists a ray $\overline{\sigma}$ in $\mathcal{B}$
such that exactly one of the following statements holds.
\begin{enumerate}
\item There exists $j\in \mathbb{N}$ such that $ m\in \widetilde{\mu}_{\leq j} \ker(M(\Rceil_{\sigma^{j}}))\setminus \widetilde{\mu}_{\leq j}  \im (M(\Rfloor_{\sigma^{j}}))$. 
\item There exists $(m_{i})\in \prod _{i\in \mathbb{N}}M[i]$ such that $m_{0}=m$ and $m_{i-1}\in \widetilde{\mu}_{i}m_{i}$ when $i>0$, but $m\notin \bigcup_{i\in \mathbb{N}}\widetilde{\mu}_{\leq i}0$. 
\end{enumerate}
\end{thm}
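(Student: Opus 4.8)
The plan is to construct the ray $\overline{\sigma}$ recursively, one vertex at a time, keeping track at each stage of a ``surviving'' element $m_i \in M[i]$ that witnesses $m \in \widetilde{\mu}_{\leq i}(\text{something})$, and to use the hypotheses of noetherianity and completeness to control the limiting behaviour. First I would fix $0 \neq m = m_0 \in M(\emptyset)$ and set $\sigma^0 = \emptyset$. At stage $i$, suppose we have built $\sigma^0, \dots, \sigma^i$ and an element $m_i \in M[i]$ with $m \in \widetilde{\mu}_{\leq i}\{m_i\}$ and $m_i \neq 0$. The vertex $\sigma^i$ has two outgoing arrows $\Rceil_{\sigma^i}$ and $\Rfloor_{\sigma^i}$; the associated relations from $M[i]$ to its two possible successors are $\graph(M(\Rceil_{\sigma^i}))^{-1}$ (going up) and $\graph(M(\Rfloor_{\sigma^i}))$ (going down). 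I would choose the extension $\sigma^{i+1}$ and the element $m_{i+1}$ as follows: if $m_i \notin \im(M(\Rfloor_{\sigma^i}))$, then I stop the construction at this stage and will verify alternative (1); otherwise pick $m_{i+1}$ with $M(\Rfloor_{\sigma^i})(m_{i+1}) = m_i$ and extend by $0$ — but we must do this while also respecting the other branch, so the actual selection rule needs to alternate or otherwise ensure both types of arrow get explored enough to cover $\ker$ versus $\im$. The cleaner device, following the pattern of \cite[Lemmas~10.2,~10.3]{Cra2018}, is: at each stage try to push $m_i$ \emph{down} along $\Rfloor_{\sigma^i}$ (preimage) and, failing that, check whether $m_i$ lies in $\ker(M(\Rceil_{\sigma^i}))$ or can be pushed \emph{up} along a preimage of $\Rceil$; the hypothesis $\im(M(\Rfloor_\sigma)) \subseteq \ker(M(\Rceil_\sigma))$ guarantees consistency of these choices so that the accumulated composite relation $\widetilde{\mu}_{\leq i}$ genuinely contains $m$.

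The dichotomy between (1) and (2) then comes from whether the recursion terminates. If at some finite stage $j$ we reach $m_j \in \ker(M(\Rceil_{\sigma^j}))$ but $m_j \notin \im(M(\Rfloor_{\sigma^j}))$, then unravelling the definition of $\widetilde{\mu}_{\leq j}$ gives $m \in \widetilde{\mu}_{\leq j}\ker(M(\Rceil_{\sigma^j}))$, and the failure $m_j \notin \im(M(\Rfloor_{\sigma^j}))$ — combined with minimality of the witness, which is where I would need an argument that no \emph{earlier} choice could have been routed through $\im(M(\Rfloor_{\sigma^j}))$ — yields $m \notin \widetilde{\mu}_{\leq j}\im(M(\Rfloor_{\sigma^j}))$, which is exactly (1). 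If the recursion never terminates, we obtain an infinite ray $\overline{\sigma}$ and a sequence $(m_i)$ with $m_{i-1} \in \widetilde{\mu}_i m_i$ for all $i > 0$ and $m_0 = m$, giving the first half of (2). The second half, $m \notin \bigcup_i \widetilde{\mu}_{\leq i}0$, must again be arranged by the construction being ``minimal'': if $m$ \emph{were} in some $\widetilde{\mu}_{\leq i}0$, the recursion would have detected this and terminated into case (1) at stage $i$ or earlier, so non-termination forces $m \notin \bigcup_i \widetilde{\mu}_{\leq i}0$.

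To make ``minimality'' precise and to ensure the recursion behaves, I expect to need the noetherian–semilocal–complete hypothesis in the following way. Consider, for a fixed prefix $\sigma^0,\dots,\sigma^i$, the descending chain of submodules of $M[0]$ obtained by considering all ways of continuing the ray and composing; more usefully, for each $i$ consider the submodule $\widetilde{\mu}_{\leq i}\im(M(\Rfloor_{\sigma^i}))$ or the intersection $\bigcap_{k \geq i}\widetilde{\mu}_{\leq k}0$, and use that a finitely generated module over such a ring is \emph{linearly compact} (equivalently, that the inverse limit of the finite-length quotients is exact and the Mittag-Leffler / Artin–Rees type conditions hold) to guarantee that an infinite system of ``push-back'' conditions is solvable — i.e., that a coherent choice of $(m_i)$ with $m_0 = m$ actually exists when no finite obstruction appears. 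This is the step I expect to be the main obstacle: over a field the relevant spaces are finite-dimensional and one simply takes a nested sequence of nonempty affine subspaces, whose intersection is nonempty by finite dimension; over $R$ one must replace this with a compactness statement for finitely generated modules over a complete semilocal noetherian ring, and one must check that the sets in question (solution sets of finite truncations of the recursion) are the images / inverse limits of the right shape for linear compactness to apply. Once that compactness input is in place, the bookkeeping — verifying $m \in \widetilde{\mu}_{\leq i}\{m_i\}$ is preserved at each step, that exactly one of (1),(2) holds, and that the kernel-versus-image distinction in (1) survives — is routine, using $\im(M(\Rfloor_\sigma)) \subseteq \ker(M(\Rceil_\sigma))$ at each application to glue the ``up'' and ``down'' segments of the composite relation.
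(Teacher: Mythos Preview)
Your overall plan—recursive ray construction plus a linear-compactness argument for the coherent sequence—is the right shape, and you correctly identify that finitely generated modules over a complete noetherian semilocal ring are linearly compact as the substitute for finite dimension. However, the proposal has a genuine gap in how it handles the ``$\notin$'' clauses in both (1) and (2), and the paper's proof is organised differently precisely to avoid this.

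The gap is that you track a specific witness $m_i\in M[i]$ with $m\in\widetilde{\mu}_{\leq i}\{m_i\}$. Knowing that your particular $m_j\notin\im(\mathscr{M}(\Rfloor_{\sigma^j}))$ does \emph{not} give $m\notin\widetilde{\mu}_{\leq j}\im(\mathscr{M}(\Rfloor_{\sigma^j}))$, because some other chain $(m_0',\dots,m_j')$ with $m_0'=m$ might land in the image. Likewise, having all your $m_i\neq 0$ does not yield $m\notin\bigcup_i\widetilde{\mu}_{\leq i}0$. You flag this yourself (``minimality of the witness, which is where I would need an argument'') but do not supply one, and there is no natural minimality available at the element level.

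The paper dissolves this problem by separating the two tasks. First, a \emph{weak covering lemma} chooses the ray using only submodule-level tests on $m$: at stage $j$ set $\sigma_{j+1}=-$ if $m\in\widetilde{\mu}_{\leq j}\im(\mathscr{M}(\Rfloor_{\sigma^j}))$, set $\sigma_{j+1}=+$ if $m\notin\widetilde{\mu}_{\leq j}\ker(\mathscr{M}(\Rceil_{\sigma^j}))$, and otherwise stop in case (1). The hypothesis $\im\subseteq\ker$ makes these mutually exclusive, and because the rule is phrased in terms of $\widetilde{\mu}_{\leq j}N$ for submodules $N$, the statements $m\notin\widetilde{\mu}_{\leq j}\im(\Rfloor)$ and $m\notin\widetilde{\mu}_{\leq j+1}0=\widetilde{\mu}_{\leq j}(\widetilde{\mu}_{j+1}0)$ are immediate from the selection rule itself—no minimality argument needed. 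This step uses no topology at all. The price is that one only obtains $m\in\bigcap_i\widetilde{\mu}_{\leq i}M[i]$, i.e.\ a separate (possibly incompatible) witness $m_i$ at each level. The second step then fixes the ray and uses linear compactness to solve the inverse system: one shows each $\widetilde{\mu}_{\leq i}M[i]$ is closed, and that for each $j$ the closed linear varieties $\widetilde{\mu}_j^{-1}m_{j-1}$ and $\widetilde{\mu}_{j+1}\cdots\widetilde{\mu}_\ell M[\ell]$ have the finite intersection property, so a coherent $(m_i)$ exists. So the role you assign to compactness is correct, but it enters \emph{after} the ray is already determined, not intertwined with its construction.
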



\section{Preliminaries}
\label{sec-prelims}

    All subcategories we consider in what follows will be assumed to be full, unless stated otherwise.  
In \S\ref{sec-prelims} we recall well-known definitions and results on covering theory and torsion theory for quasi-abelian categories, and then relate it to purity and definability in the context of a module category, finally noting the structure theory for modules over a ring of Morita context. 
We gradually restrict the setting we work in.

\subsection{Covering theory }
\label{subsubsec-covering}

We begin by recalling generalisations of projective covers and injective envelopes.  
We follow work of Enochs \cite{enochs-covers} and later we follow work of Rada and Saorin \cite{rada-saorin-rings-characterised}. 
Throughout \S\ref{sec-prelims} we let $\CA$ be a preadditive category. 
Later we add more restrictions to $\CA$.

\begin{defn}
        \label{defn-functorial-finiteness} 
\cite[\S1,~p.~190]{enochs-covers}
(c.f.~\cite[\S1,~p.~900]{rada-saorin-rings-characterised}) 
Let $\CC$ be a subcategory of $\CA$ and $b\in \Hom{\CA}(M,N)$. 

We say $b$ is \emph{left}  (respectively, \emph{right}) \emph{minimal} if every $a\in\End{\mathcal{\CA}}(N)$ with $ab=b$ (respectively, $c\in\End{\mathcal{\CA}}(M)$ with $bc=b$) is an isomorphism.  
We say $b$ is a $\CC$-\emph{preenvelope of} $M$ if $N\in \CC$ and for any  $c\in \Hom{\CA}(M,C)$ with $C\in \CC$ we have $c=ab$ for some $a\in  \Hom{\CC}(N,C)$. 
The notion of a  $\CC$-\emph{precover} is dual. 
A $\CC$-\emph{envelope} (respectively, $\CC$-\emph{cover}) is a left (respectively, right) minimal $\CC$-preenvelope (respectively, $\CC$-precover). 
We say that $\CC$ is  \emph{enveloping} (respectively, \emph{covering})  if every object in $\CA$ admits a $\CC$-envelope (respectively, $\CC$-cover).         
    \end{defn}

\subsubsection{}
\label{para-AS-alt-terms} 
\cite[p.~81]{auslander-smalo-preprojectives-over-artin-algs} 
Auslander and Smal{\o} used  alternative terminology to refer to these notions. 
 When  $\CC$  is closed under direct sums and summands one says  $\CC$ is \emph{covariantly} (respectively, \emph{contravariantly}) {finite} in $\CA$ provided every object in $\CA$ admits a $\CC$-preenvelope (respectively, $\CC$-precover). 
    To say that $\CC$ is  \emph{functorially finite} in $\CA$ means  $\CC$ is both covariantly finite and contravariantly finite in $\CA$. 
 It is straightforward to see that epimorphic preenvelopes are envelopes and monomorphic precovers are covers. 

\subsubsection{}
\label{para-co-complete-covering-torsion} 
\cite[Propositions~4.1,~4.11]{rada-saorin-rings-characterised} 
If  $\CA$ is a category of modules over a ring, then  a  subcategory $\CC$ of  
 $\CA$ is closed under subobjects (meaning the domain of any morphism, whose codomain lies in $\CC$, must also lie in $\CC$)  and products (meaning the product of any set of objects, each of which lies in $\CC$, must also lie in $\CC$) if and only if any object in $\CA$ has a surjective $\CC$-envelope. 
 Dually $\CC$ is closed under quotients and coproducts if and only if any object in $\CA$ has a monomorphic $\CC$-cover.

 \subsection{Exactness}
 \label{subsec-exactness}
 From now on assume $\CA$ is additive. 
 We recall what it means for $\CA$ to be \emph{exact} as introduced by Quillen \cite{quillen-higher-algebraic-K}; see also work of B{\"u}hler \cite{buhler-exact}. 
   Consider a commutative square in $\CA$ of the form
     \[
\begin{tikzcd}
 L\arrow[d, "w"']\arrow[r, "x"] & M\arrow[d, "z"]
\\
 N\arrow[r, "y"'] & P
\end{tikzcd}
\]
When this square is \emph{cartesian}, meaning the pullback of $y$ and $z$, we refer to $x$ as \emph{the pullback of $y$ along $z$} and $w$ as \emph{the pullback of $z$ along $y$}.    
There are dual notions for when the square is \emph{cocartesian}. 
One says that a class $\mathcal{M}$ of morphisms in $\CA$ is \emph{stable under pullback} if the pullback of any morphism in $\mathcal{M}$ lies in $\mathcal{M}$. 
Dually one can define when $\mathcal{M}$ is \emph{stable under pushout}. 
We follow B{\"u}hler \cite[Definition~2.1]{buhler-exact}.

\begin{defn}
\cite[\S2]{quillen-higher-algebraic-K}   
    such that $x$ is the kernel of $y$ and such that  $y$ is the cokernel of $x$. 
    An \emph{admissible monic}  (respectively, \emph{epic}) refers to a morphism $x$ (respectively, $y$) such that there exists a morphism  $y$ (respectively, $x$) for which $(x,y)$ is a kernel-cokernel pair. 

    By an \emph{exact structure} on $\CA$ one means a class $\mathcal{E}$ of kernel-cokernel pairs such that:  
    the identity morphism on any object is simultaneously an admissible monic and an admissible epic; 
    the pushout (respectively, pullback) of an admissible monic (respecitvely, epic) along any morphism exists; 
    and the class of admissible monics (respectively, epics) are closed under composition and stable under pushout (respectively, pullback). 
\end{defn}

\subsection{Quasi-abelian categories}
\label{subsec-quasi-abel}
From now on assume the additive category $\CA$ is \emph{preabelian}, meaning every morphism has a kernel and a cokernel; see the book of Bucur and Deleanu \cite{Bucur-Deleanu-intro}.
Next we recall a generalisation of abelian categories due to Schneiders \cite{Schneiders-quasi-abelian}. 
Rump \cite{rump-almost-abelian} used the terminology of \emph{almost abelian} categories. 

\subsubsection{}
\label{para-preable} 
\cite[Definition~1.1.1,~Remark~1.1.2]{Schneiders-quasi-abelian} 
Any preabelian category such as $\CA$ has all pullbacks and pushouts, and every morphism $x\colon A\to B$ has a \emph{coimage} $c_{x}\colon A\to \coim(x)$, the cokernel of its kernel,  as well as an \emph{image} $i_{x}\colon \im(x)\to B$, the kernel of its cokernel. 
    It follows that any such $x$ factors as $x=i_{x}\overline{x}c_{x}$ where $\overline{x}\colon \coim(x)\to \im(x)$ is the \emph{parallel} morphism. 
    Note that if $x$ is a monomorphism then $c_{x}$ is an isomorphism and hence $\overline{x}$ is a monomorphism. 
    One says $x$ is  \emph{strict} provided $\overline{x}$ is an isomorphism. 
    A kernel of a morphism is always strict. 
    Conversely, as discussed above, if $x$ is a strict monomorphism then it factors through $i_{x}$ by an isomorphism. 
    Dually $x$ is a strict epimorphism if and only if it factors through $c_{x}$ by an isomorphism. 

\begin{defn} 
\cite[Definition~1.1.3]{Schneiders-quasi-abelian} 
(see also \cite[\S1,~p.~167]{rump-almost-abelian}). 
We say the preabelian category $\CA$ is \emph{quasi}-\emph{abelian} if cokernels in $\CA$ are stable under pullback and kernels in $\CA$ are stable under pushout. 
\end{defn}

As discussed above  the class of strict monomorphisms (respectively, epimorphisms) coincides with the class of kernels (respectively, cokernels) and so the notions from  \cite{rump-almost-abelian}  and \cite{Schneiders-quasi-abelian} coincide. 
We now recall the way in which quasi-ablelian categories are exact, and hence why we can refer to short exact sequences. 

\subsubsection{}
\label{para-exactness-mono}     \cite[Corollary~1.5, Remarks~1.1.10,~1.1.11]{Schneiders-quasi-abelian}  
    Adopt the notation from \ref{para-preable}. 
    Given morphisms $x\colon A\to B$ and $y\colon B\to C$ such that $yx=0$, then since every kernel is a monomorphism and every cokernel is an epimorphism, we also have $\overline{y} c_{y}i_{x}\overline{x}=0$. 
    If also $\overline{y}$ is a monomorphism and $\overline{x}$ is an epimorphism, then $c_{y}i_{x}=0$ and so $i_{x}$ factors through a morphism $m_{xy}\colon \im(x)\to \ker(y)$, necessarily a monomorphism. 

\subsubsection{}
\label{para-strictly-exact}    \cite[Proposition~1.1.7]{Schneiders-quasi-abelian} 
Adopt the notation from \ref{para-preable} and \ref{para-exactness-mono}. 
If $\CA$ is quasi-abelian  the sequence $A\to B\to C$ is \emph{strictly exact} (respectively \emph{coexact}) at $Y$ provided $m_{xy}$ is an isomorphism and provided $x$ (respectively, $y$) is strict. 
The sequence  $0\to A\to B\to C\to 0$ is strictly exact at $A$, $B$ and $C$ if and only if it is strictly coexact at $A$, $B$ and $C$ if and only if $(x,y)$ is a kernel-cokernel pair. 
Thus any quasi-abelian category $\CA$ equipped with the class of strictly (co)exact sequences is exact as in \ref{subsec-exactness}.

\subsection{Orthogonal subcategories} 
\label{subsect-orthogonal}
From now on assume $\CA$ is quasi-abelian. 
We recall language required for the notion of \emph{torsion pairs}, recalled later. 
Let $\CT$ and $\CF$ be subcategories of $\CA$. 
Define the subcategories
    \[
    \begin{array}{cc}
    \CT^{\perp}\coloneqq \{F\in\CA\mid \Hom{\CA}(T,F)=0\text{ for all }T\in \CT\},
    &
    {}^{\perp}\CF\coloneqq \{T\in\CA\mid \Hom{\CA}(T,F)=0\text{ for all }F\in \CF\}.
    \end{array}
    \]
    Let $\CT*\CF$ be the subcategory of $M\in\CA$ such that there exists ${}_{\CT}M\in\CT$, $M_{\CF}\in\CF$ and an exact sequence 
    \[
    \begin{array}{cc}
    \begin{tikzcd}
        0\arrow[r]
        &
        {}_{\CT}M\arrow[r, "i^{\CT}_{M}"]
        &
        M\arrow[r,   "p_{M}^{\CF}"]
        &
        M_{\CF}\arrow[r]
        &
        0
    \end{tikzcd}
    &
    \exact{M}{T}{F}
    \end{array}
    \]
    in $\CA$, which we recall means that $(i^{\CT}_{M},p^{\CF}_{M})$ a kernel-cokernel pair.  
    %
To say that a subcategory $\CC$ of $\CA$ is \emph{closed under extensions} is to say that $\CC*\CC\subseteq\CC$. 
Conversley, if $0\in \CC$ then $\CC\subseteq \CC * \CC$  since the identity is always an admissible monic and an admissible epic. 
Note also that ${}^{\perp}(\CC^{\perp})\supseteq \CC\subseteq {}^{\perp}(\CC^{\perp})$, $ ({}^{\perp}(\CC^{\perp}))^{\perp}=\CC^{\perp} $ and $ {}^{\perp}(({}^{\perp}\CC)^{\perp})={}^{\perp}\CC$, and  for any other subcategory  $\CB\subseteq \CC$  we have $\CC^{\perp}\subseteq \CB^{\perp}$ and ${}^{\perp}\CC\subseteq {}^{\perp}\CB$. 

\subsubsection{}
\label{para-star-cats}  \cite[Corollary~8.13]{buhler-exact} Let $a\in \Hom{\CA}(M,N)$ for $M\in \CT *\CF$ and $N\in \CS*\CG$ where $\CS$, $\CG$, $\CT$ and $\CF$ 
 are subcategories of $\CA$. 
Given $p_{N}^{\CG}ai_{M}^{\CT}=0$ one obtains unique $x$ and $y$ such that the diagram below commutes
\[
\begin{tikzcd}[column sep = 1.5cm, row sep = 0.4cm]
0\arrow[r] &  {}_{\CT}M\arrow[d,  "x"']\arrow[r, "i^{\CT}_{M}"] & M\arrow[d, "a"]\arrow[r, "p_{M}^{\CF}"] & M_{\CF}\arrow[d, "y"]\arrow[r] & 0 & \exact{M}{T}{F}
\\
0\arrow[r]  &  {}_{\CS}N\arrow[r, "i^{\CS}_{N}"] & N\arrow[r, "p_{N}^{\CG}"] & N_{\CG}\arrow[r] & 0 & \exact{N}{S}{G}
\end{tikzcd}
\]
 Note that, since $\CA$ is quasi-abelian it is both idempotent complete and exact, and in this situation one can apply the usual form of the \emph{snake lemma}, where exactness refers to the concept from \ref{para-strictly-exact}. 

\subsubsection{}
\label{para-stars-are-functors} 
Write $\Hom{\CA}(\CT,\CF)=0$ to mean $\Hom{\CA}(T,F)=0$ for all $T\in \CT$ and $F\in\CF$. 
In this case any $M\in \CT * \CF$ has a $\CT$-precover $i^{\CT}_{M}\colon {}_{\CT}M\to M$ and a $\CF$-preenvelope $p_{M}^{\CF}\colon M\to M{}_{\CF}$. 
Applying the snake lemma to $a=\idfunc{M}$ in \ref{para-star-cats} defines functors of the form
    \[
    \begin{array}{cc}
        {}_{\CT}(-)\colon \CT * \CF\to \CT, & 
        (-)_{\CF}\colon \CT * \CF\to \CF.
    \end{array}
    \]
\subsection{Torsion pairs} 
\label{subsubsec-torsion}
For abelian categories, torsion pairs were introduced by Dickson \cite{Dickson-torsion-theory-for-abelian-cats}. 
We follow instead work of Tattar \cite{tattar-quasi-torion}, who worked in the more general context of quasi-abelian categories. 
Although there are more generalities to work in, they are not relevant here. 
Continue to assume $\CA$ is quasi-abelian.

\begin{defn}
    \label{defn-torsion-pair} 
    \cite[Definition~2.4]{tattar-quasi-torion} 
    A \emph{torsion pair} is a pair $(\CT,\CF)$ of subcategories with $\Hom{\CA}(\CT,\CF)=0$ and $\CT*\CF=\CA$ as in \ref{subsect-orthogonal}. 
    We call $\CT$ the \emph{torsion class}, and $\CF$ the \emph{torsion-free class}, of $(\CT,\CF)$. 
\end{defn}

\subsubsection{}
\label{para-tattar-1} 
\cite[Proposition~2.5]{tattar-quasi-torion} 
A subcategory $\CF$ is a torsion-free class in $\CA$ if and only if there is a functor $\mathsf{f} \colon \CA\to \CF$ where $\CF=\{M\mid \mathsf{f}(M)\cong M\}$ and, for all $M\in \CA$, we have $\mathsf{f}^{2}(M)\cong \mathsf{f}(M)$ and a cokernel $q_{M}\colon M\to \mathsf{f}(M)$ with $\mathsf{f}(\ker(q_{M}))=0$ and $\mathsf{f}(a)q_{M}=q_{N}a$ for all $a\in\Hom{\CA}(M,N)$. 
In this case $\mathsf{f}$ is right adjoint to the inclusion $\CF\hookrightarrow \CA$. 
Dually one can characterise torsion classes $\CT$. 

\subsubsection{}
\label{para-tattar-2} 
\cite[Proposition~2.9]{tattar-quasi-torion} 
If $\CF$ is a torsion-free class then, as in \ref{para-co-complete-covering-torsion} and \ref{subsect-orthogonal}, $\CF$ is closed under extensions, subobjects  and products.  
Dual properties hold for torsion classes. 
If $\CA$ is abelian then these properties characterise torsion and torsion-free classes; see  \cite[Theorem~2.3]{Dickson-torsion-theory-for-abelian-cats}. 
This characterisation fails in the generality of quasi-abelian categories; see \cite[Remark~2.10]{tattar-quasi-torion} for a detailed counter-example. 

\subsubsection{}
\label{para-tattar-3} \cite[Proposition~3.6]{tattar-quasi-torion} Let $(\CS,\CG)$ and $(\CT,\CF)$   both be torsion pairs in $\CA$. 
   If $a\in\Hom{\CT\cap\CG}(M,N)$ with kernel
    $h\colon K\to M$ and cokernel $d\colon N\to C$ in $\CA$, then the compositions 
   \[
   \begin{array}{ccc}
    \begin{tikzcd}
        {}_{\CT}K\arrow[r, "i_{K}^{\CT}"]
    &
    K\arrow[r, "h"] 
    &
    M
    \end{tikzcd}
    &
    {}
    &
    \begin{tikzcd}
    N\arrow[r, "d"] 
    &
    C \arrow[r, "p_{C}^{\CG}"]
    &
    C_{\CG}
    \end{tikzcd}
   \end{array}
   \]
   define the kernel and cokernel of $a$ in $\CT\cap \CG$, respectively.    In particular $\CT\cap \CG$, and dually $\CS\cap\CF$, are  preabelian.

\subsubsection{}
\label{para-tattar-4} 
\cite[Propositions~5.12~and~5.14]{tattar-quasi-torion} 
Using this observation, one can extract properties of torsion pairs in $\CA$ from similar properties for the abelian case. 
This is done via an embedding of $\CA$ into an abelian localisation of a subcategory of the homotopy category given by $2$-term complexes defined by monomorphisms; see \cite[\S5.1]{tattar-quasi-torion}. 
For example, a pair $(\CT,\CF)$ of subcategories of $\CA$ is a torsion pair if and only if we have $\CT^{\perp}=\CF$ and ${}^{\perp}\CF=\CT$. 
Furthermore, in this case, both $\CT$ and $\CF$ are again quasi-abelian by means of \ref{para-tattar-3}. 

\subsubsection{}
\label{para-tattar-5} \cite[Chapter~VI,~\S1,~Propositions~3.2~and~3.7]{stenstrom-rings-of-quotients} Consider the case where $\CA$ is abelian, and let $(\CT,\CF)$ be a torsion pair in $\CA$. 
One says $(\CT,\CF)$ is: \emph{faithful} when every projective object in $\CA$ lies in $\CF$;   \emph{split} if for each $M\in\CA$ the short exact sequence $\exact{M}{T}{F}$ splits; and \emph{hereditary} if $\CT$ is closed under subobjects. 
See for example \cite{colpi-fuller-tilting-objects} and   \cite{telpy-homo-dimension}. 
When $\CA$ is Grothendieck with enough injectives, the torsion pair $(\CT,\CF)$ is hereditary if and only if $\CF$ is closed under injective envelopes if and only if $\CT={}^{\perp}I$ for some injective object $I$.  
%


\subsection{Definable categories}
\label{subsubsec-definable
} 
Prest \cite{prest-model-theory-of-modules} introduced the notion of a \emph{definable category} which has been seen to be a natural context in which to consider pure-injectivity; see for example \cite{Prest-definable-and-monoidal}. 
Definable subcategories of a module category were characterised  by Crawley-Boevey in \cite{crawley-boevey-inf-dim-reps} using certain closure operations. 
They also have connections to the torsion theory discussed above. 
For consistency we restrict our attention to Grothendieck categories with enough injectives, and for simplicity we let $\CA$ be the category $\lMod{R}$ of left modules over a ring $R$. 
We follow a book by Jensen and Lenzing \cite{JenLen1989}.

\subsubsection{}
\label{para-jenlenz-chara-prelim} 
\cite[Theorem~6.4]{JenLen1989} 
An  embedding $\iota\colon  M\hookrightarrow N$ is said to be \emph{pure} if, for any right module $Q$,  its image $\idfunc{Q}\otimes \iota\colon Q\otimes M\to Q\otimes N$ under the functor $Q\otimes_{R}-$  is injective. 
Equivalently, for any finitely presented left module $P$, any homomorphism $P\to N/\im(\iota)$ factors through $\cok(\iota)$. 
One says $M$ is \emph{pure}-\emph{injective} if it is \emph{relatively} injective with respect to pure embeddings, meaning any pure-embedding leaving $M$ splits. 

\subsubsection{}
\label{para-jenlenz-chara} 
\cite[Theorem~7.1(vi),~Theorem~8.1(ii)]{JenLen1989} 
There are various ways to characterise pure-injective modules. 
Let $\mathtt{I}$ be an indexing set. 
        Let $\iota_{\mathtt{I}}\colon \bigoplus_{\mathtt{I}}M\to\prod_{\mathtt{I}}M$ be the inclusion of the coproduct, into the product, 
 of $M$ indexed over $\mathtt{I}$. 
         The universal property of the coproduct defines  \emph{summation} homomorphism $ \sigma_{\mathtt{I}}\colon \bigoplus_{\mathtt{I}}M\to M$ that sends $(m_{i})$ to $\sum_{i}m_{i}$. 
    Note $\sigma_{\mathtt{I}}$ makes sense since $m_{i}=0$ for all but finitely many $i$. 
It follows that  $M$ is pure-injective if and only if, for any set $\mathtt{I}$, there is a morphism $\tau_{\mathtt{I}}\colon \prod_{\mathtt{I}} M\to M$ such that $\tau_{\mathtt{I}}\iota_{\mathtt{I}}=\sigma_{\mathtt{I}}$.  
One says that $M$ is $\Sigma$-\emph{pure}-\emph{injective} if $\bigoplus_{\mathtt{I}}M$ is pure-injective for any $\mathtt{I}$, and this is equivalent to requiring that  $\iota_{\mathtt{I}}$ is a split monomorphism for any $\mathtt{I}$. 
In particular any $\Sigma$-pure injective is pure-injective. 

\begin{defn}
    \cite[\S2.6]{prest-model-theory-of-modules}  
    An additive subcategory $\CC$ of $\lMod{R}$ is (\emph{axiomatisable}, or rather) \emph{definable} if it is closed under products, directed colimits and pure submodules. 
\end{defn}

These algebraic conditions are not the original formulation of the definition, but rather a characterisation of Crawley-Boevey  \cite[\S2.3]{crawley-boevey-inf-dim-reps}, which we use to avoid introducing model theoretic background. 
Of particular relevance later will be a characterisation of definable torsion-free classes due to Angeleri H\"ugel \cite{Hugel-abundance-of-silting}. 

\subsubsection{}
\label{para-definable} 
\cite[Theorem~3.8,~Corollary~3.9]{Hugel-abundance-of-silting} 
Note that the condition of being closed under directed colimits has a relation to covering theory. 
Specifically, if $\CC$ is closed under directed colimits, then any module that has a $\CC$-precover must also have a $\CC$-cover; see \cite[Theorem~2.2.12]{xu-flat-covers-of-modules}. 
In fact, a torsion-free class $\CF$ in $\lMod{R}$ is definable if and only if every module has an $\CF$-cover, in which case there is a pure injective module $M$ such that every module in $\CF$ is a submodule of a direct product of copies of $M$.

\subsection{Morita context rings}
\label{subsec-morita-context}

Later we look at the path algebra of the Kronecker quiver over a commutative ring. 
To this end we study so-called \emph{rings of Morita context}; see for example the book by  Bass \cite{Bass-algebraic-K-theory}. 

\begin{defn}
    \cite[Chapters~II~and~III]{Bass-algebraic-K-theory} 
    A \emph{Morita context} is a tuple $(R,S;\,N,L;\,\theta,\zeta)$ where $R$ and $S$ are rings, ${}_{R}N_{S},{}_{S}L_{R}$ are bimodules,  $\theta\colon N\otimes L\to R$ and  $\zeta\colon L\otimes N\to S$ are bimodule homomorphisms and where $\theta(n\otimes \ell)n'=n\zeta(\ell\otimes n')$ and $\ell\theta(n\otimes \ell')=\zeta(\ell\otimes n)\ell'$ for each $\ell,\ell'\in L$ and $n,n'\in N$. 
It follows that there is a \emph{ring of Morita context} denoted 
$\begin{bsmallmatrix}
R & N
\\
L & S
\end{bsmallmatrix}$ 
whose ring multiplication is canonically defined by $\theta$ and $\zeta$. %
\end{defn}

\subsubsection{}
\label{para-left-mods-morita}  A left module over  $\begin{bsmallmatrix}
R & N
\\
L & S
\end{bsmallmatrix}$  is the same thing as a decorated column $\begin{bsmallmatrix}
M
\\
K
\end{bsmallmatrix}_{\psi}^{\varphi}$ where $M$ is a left $R$-module, $K$ is a left $S$-module, and  $\varphi\colon N\otimes K\to M$  and $\psi\colon L\otimes M\to K$ are module homomorphisms such that
\[
\begin{array}{cc}
\theta(n\otimes \ell)m=\varphi(n\otimes\psi(\ell\otimes m)),
&
\zeta(\ell\otimes n)k = \psi(\ell\otimes\varphi(n\otimes k)),
\end{array}
\]
for each $k\in K$, $\ell\in L$, $m\in M$ and $n\in N$. 
Here the action of $\begin{bsmallmatrix}
R & N
\\
L & S
\end{bsmallmatrix}$ is defined canonically by means of multiplying a length-$2$ column on the left. 
In case either of $\psi$ or $\varphi$ are $0$ we omit the corresponding decoration. 
For example a left module over   $\begin{bsmallmatrix}
R & 0
\\
L & S
\end{bsmallmatrix}$  has the form $\begin{bsmallmatrix}
M
\\
K
\end{bsmallmatrix}_{\psi}$ with no condition on $\psi\colon L\otimes M\to K$.

\subsubsection{}
\label{para-tensor-hom-notation} For an $S$-$R$-bimodule $L$ the image of the bijections from the tensor-hom adjunction will be denoted 
\begin{equation}
\label{tensor-hom-notation}
         \begin{array}{cc}\Hom{S}(L\otimes M,K )\leftrightarrow 
     \Hom{R}(M,\Hom{S}(L,K)),
     &
    \tens{\mu}\leftrightarrow \homs{\mu}
    \end{array}
\end{equation}

    meaning that, given either the left $S$-module homomorphism $\tens{\mu}\colon L\otimes M\to K$ or the left $R$-module homomorphism $\homs{\mu}\colon M\to \Hom{S}(L,K)$, one defines the other map using the equation $\tens{\mu}(\ell\otimes m)=(\homs{\mu}(m))(\ell)$. 

We characterise flat and injective modules for triangular rings of Morita contex  following work of Fossum, Griffith and Reiten \cite{fossum-griffith-reiten}; see also work of Haghany and Varadarajan \cite{haghany-varadarajan-formal-triangular-matrix-ring}, M{\"u}ller \cite{mueller-marianne-rings-of-quotients} and Stenstr{\"o}m \cite{Stenstrom-maximal-ring-of-quotients}.

\subsubsection{}
\label{para-flats-injectives} \cite[Corollary~1.6~(d),~Proposition~1.14~(bis.)]{fossum-griffith-reiten} Adopting the notation from \eqref{tensor-hom-notation} in \ref{para-tensor-hom-notation}, the left $\begin{bsmallmatrix}
R & 0
\\
L & S
\end{bsmallmatrix}$-module $\begin{bsmallmatrix}
M
\\
K
\end{bsmallmatrix}_{\tens{\psi}}$ is flat if and only if $M$ is a flat  $R$-module, $\cok(\tens{\psi})$ is a flat  $S$-module and $\homs{\psi}$ is injective. 
Following \cite[Theorem~3.1]{haghany-varadarajan-formal-triangular-matrix-ring} one obtains a similar result for projective left $\begin{bsmallmatrix}
R & 0
\\
L & S
\end{bsmallmatrix}$-modules.  
Likewise,  $\begin{bsmallmatrix}
M
\\
K
\end{bsmallmatrix}_{\tens{\psi}}$ is injective if and only if $K$ is an injective  $S$-module, $\ker(\homs{\psi})$ is an injective $R$-module and $\homs{\psi}$ is surjective.

\section{Closure properties for Kronecker representations}
\label{sec-linrels}


From now on  we assume that $R$ is a (unital, associative and) commutative ring. 
    Let $\Kron$ be the \emph{Kronecker quiver}: two arrows $a$ and $b$ with the same tail $t$ and same head $h$.  
    Write $R\Kron$ for the \emph{path algebra}, meaning the free left $R$-module $Re_{t}\oplus Re_{h}\oplus Ra\oplus Rb$ equipped with a multiplication $R$-linearly extending the equations 
    \[
    \begin{array}{ccccc}
    e_{t}^{2}=e_{t},
    &
    e_{h}^{2}=e_{h},
    &
    a=e_{h}a=ae_{t},
    &
    b=e_{h}b=be_{t},
    &
    e_{t}e_{h}=e_{h}e_{t}=0.
    \end{array}
    \]
    We can and do identify  $R\Kron$ with $R^{4}$ by extending $
    e_{t}\mapsto(1,0,0,0)$, $
    e_{h}\mapsto(0,1,0,0)$, $
    a\mapsto(0,0,1,0)$ and $
    b\mapsto(0,0,0,1)$ multiplicatively and $R$-linearly. 
    In this way multiplication can be described by
    \[
    \begin{array}{cc}
    (r_{t},r_{h},r_{a},r_{b})\cdot  (s_{t},s_{h},s_{a},s_{b})\coloneqq (r_{t}s_{t},r_{h}s_{h},r_{a}s_{t}+r_{h}s_{a},r_{b}s_{t}+r_{h}s_{b}),
    &
    (r_{t},s_{t},r_{h},s_{h},r_{a},s_{a},r_{b},s_{b}\in R).
    \end{array}
    \]
    Let $\lRep{R}(\Kron)$ be the category  with objects $(M_{t},M_{h};\mu_{a},\mu_{b})$ for $\mu_{a},\mu_{b}\in \Hom{R}(M_{t},M_{h})$ and  where  
           \[
           \Hom{\lRep{R}(\Kron)}((L_{t},L_{h};\lambda_{a},\lambda_{b}),(M_{t},M_{h};\mu_{a},\mu_{b}))\coloneqq \left\{(f_{t},f_{h})\left|
           \begin{array}{c}
           f_{x}\in  \Hom{R}(M_{x},N_{x})\text{ for }x=t,h,\\
        \text{such that }f_{h}\lambda_{c}=\mu_{c}f_{t}\text{ for }c=a,b.           \end{array}\right.\right\},
           \]
           and where composition and identity morphisms are defined component-wise. 
           
\begin{rem}
\label{rem-well-known-equivalences}
Considering $\Kron$ as a category, it follows that $\lRep{R}(\Kron)$ is the category of functors of the form $\Kron\to \lMod{R}$. 
    As is well-known, $\lRep{R}(\Kron)$ and $\lMod{R\Kron}$ are $R$-linear categories, and there is an $R$-linear equivalence $\lRep{R}(\Kron)\to \lMod{R\Kron}$ sending $(M_{t},M_{h};\mu_{a},\mu_{b})$ to $M_{t}\oplus M_{h}$ with the $R\Kron$-action given by
    \[
    \begin{array}{cc}
    (r_{t},r_{h},r_{a},r_{b})\cdot (m_{t},m_{h})\coloneqq  (r_{t}m_{t}, r_{a}\mu_{a}(m_{t})+r_{b}\mu_{b}(m_{t})+r_{h}m_{h}), 
    &
    (m_{t}\in M_{t},\, m_{h}\in M_{h}).
    \end{array}
    \]
    There is an $R$-algebra isomorphism between the path algebra and the lower-triangular ring of Morita context 
    \[
    \begin{array}{ccccc}
    R\Kron \to \begin{bsmallmatrix}
R & 0
\\
R^{2} & R
\end{bsmallmatrix},
&
e_{t}\mapsto \begin{psmallmatrix}
1 & 0
\\
(0,0)
&
0
\end{psmallmatrix},
e_{h}\mapsto \begin{psmallmatrix}
0 & 0
\\
(0,0)
&
1
\end{psmallmatrix},
a\mapsto \begin{psmallmatrix}
0 & 0
\\
(1,0)
&
0
\end{psmallmatrix},
b\mapsto \begin{psmallmatrix}
0 & 0
\\
(0,1)
&
0
\end{psmallmatrix}. 
    \end{array}
    \]
See for example the work of G{\"o}bel and Simson  \cite[p.~215]{Gobel-Simson-embeddings}. 
\end{rem}


\begin{lem}
\label{lem-characterising-flat-and-injective-kronecker-mods}
     For any object $(M_{t},M_{h};\mu_{a},\mu_{b})$ of $\lRep{R}(\Kron)$ the following statements hold. 
     \begin{enumerate}
         \item $(M_{t},M_{h};\mu_{a},\mu_{b})$ is flat (respectively, projective) if and only if $M_{t}$ and $\cok{\begin{psmallmatrix} \mu_{a} & \mu_{b}
\end{psmallmatrix}}$ are flat (respectively, projective) over $R$ and ${\begin{psmallmatrix} \mu_{a} & \mu_{b}
\end{psmallmatrix}}\colon M_{t}^{2}\to M_{h}$ is injective, in which case so is ${\begin{psmallmatrix} \mu_{a} \\ \mu_{b}
\end{psmallmatrix}}\colon M_{t}\to M_{h}^{2}$.   
         \item $(M_{t},M_{h};\mu_{a},\mu_{b})$ is injective if and only if $M_{h}$ and $\ker{\begin{psmallmatrix} \mu_{a} \\ \mu_{b}
\end{psmallmatrix}}$ are injective  over $R$ and ${\begin{psmallmatrix} \mu_{a} \\ \mu_{b}
\end{psmallmatrix}}\colon M_{t}\to M_{h}^{2}$ is surjective, in which case so is ${\begin{psmallmatrix} \mu_{a} & \mu_{b}
\end{psmallmatrix}}\colon M_{t}^{2}\to M_{h}$. 
     \end{enumerate}
\end{lem}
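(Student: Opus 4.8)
The plan is to transport the statement through the identifications of \Cref{rem-well-known-equivalences} and then read it off from the Fossum--Griffith--Reiten characterisations collected in \ref{para-flats-injectives}. By definition, $(M_{t},M_{h};\mu_{a},\mu_{b})$ is flat, projective, or injective exactly when its image in $\lMod{R\Kron}$ has that property; and since the $R$-algebra isomorphism $R\Kron\cong\begin{bsmallmatrix}R & 0\\ R^{2} & R\end{bsmallmatrix}$ induces an equivalence $\lMod{R\Kron}\simeq\lMod{\begin{bsmallmatrix}R & 0\\ R^{2} & R\end{bsmallmatrix}}$ compatible with tensor products, the same holds for the corresponding left $\begin{bsmallmatrix}R & 0\\ R^{2} & R\end{bsmallmatrix}$-module. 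So it is enough to write that module as a decorated column and feed it into \ref{para-flats-injectives}.

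The substantive step is this identification. Comparing the $R\Kron$-action displayed in \Cref{rem-well-known-equivalences} with the description of left $\begin{bsmallmatrix}R & 0\\ R^{2} & R\end{bsmallmatrix}$-modules in \ref{para-left-mods-morita} --- in particular, noting that $e_{t}$ corresponds to the idempotent picking out the top entry of the column --- one checks that $(M_{t},M_{h};\mu_{a},\mu_{b})$ corresponds to $\begin{bsmallmatrix}M_{t}\\ M_{h}\end{bsmallmatrix}_{\tens{\psi}}$ with $\tens{\psi}\colon R^{2}\otimes_{R}M_{t}\to M_{h}$ given by $(1,0)\otimes m\mapsto\mu_{a}(m)$ and $(0,1)\otimes m\mapsto\mu_{b}(m)$. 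Under the canonical isomorphisms $R^{2}\otimes_{R}M_{t}\cong M_{t}^{2}$ and $\Hom{R}(R^{2},M_{h})\cong M_{h}^{2}$ this says $\tens{\psi}=\begin{psmallmatrix}\mu_{a} & \mu_{b}\end{psmallmatrix}\colon M_{t}^{2}\to M_{h}$, so by \eqref{tensor-hom-notation} its adjoint is $\homs{\psi}=\begin{psmallmatrix}\mu_{a}\\ \mu_{b}\end{psmallmatrix}\colon M_{t}\to M_{h}^{2}$. Substituting into \ref{para-flats-injectives} now gives the statement: flatness of the column means precisely that $M_{t}$ and $\cok{\begin{psmallmatrix}\mu_{a} & \mu_{b}\end{psmallmatrix}}$ are flat over $R$ (here $S=R$) and $\begin{psmallmatrix}\mu_{a} & \mu_{b}\end{psmallmatrix}$ is a monomorphism, which is the flat half of (1); the projective half is the same argument via the analogue for projectives attributed in \ref{para-flats-injectives} to Haghany and Varadarajan; and injectivity of the column means precisely that $M_{h}$ and $\ker{\begin{psmallmatrix}\mu_{a}\\ \mu_{b}\end{psmallmatrix}}$ are injective over $R$ and $\begin{psmallmatrix}\mu_{a}\\ \mu_{b}\end{psmallmatrix}$ is an epimorphism, which is (2).

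It remains to check the two ``in which case'' clauses, and both are elementary. For (1): precomposing $\begin{psmallmatrix}\mu_{a} & \mu_{b}\end{psmallmatrix}$ with the split monomorphism $M_{t}\to M_{t}^{2}$, $m\mapsto(m,0)$, returns $\mu_{a}$, so if $\begin{psmallmatrix}\mu_{a} & \mu_{b}\end{psmallmatrix}$ is injective then so is $\mu_{a}$, and then $\begin{psmallmatrix}\mu_{a}\\ \mu_{b}\end{psmallmatrix}$, which has $\mu_{a}$ among its coordinates, is injective too. Dually for (2): postcomposing $\begin{psmallmatrix}\mu_{a}\\ \mu_{b}\end{psmallmatrix}$ with the first projection $M_{h}^{2}\to M_{h}$ returns $\mu_{a}$, so if $\begin{psmallmatrix}\mu_{a}\\ \mu_{b}\end{psmallmatrix}$ is surjective then so is $\mu_{a}$, and then $\begin{psmallmatrix}\mu_{a} & \mu_{b}\end{psmallmatrix}$, which factors $\mu_{a}$ through the first summand $M_{t}\hookrightarrow M_{t}^{2}$, is surjective too.

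The only place I expect to need care is the middle paragraph: correctly fixing that $M_{t}$ rather than $M_{h}$ sits on top of the column, and matching the tensor--hom adjoint pair $(\tens{\psi},\homs{\psi})$ with the row/column pair $(\begin{psmallmatrix}\mu_{a} & \mu_{b}\end{psmallmatrix},\begin{psmallmatrix}\mu_{a}\\ \mu_{b}\end{psmallmatrix})$ in the right order. Once the dictionary is set up, every clause of the lemma is either an instance of \ref{para-flats-injectives} or a one-line observation.
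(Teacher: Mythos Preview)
Your proposal is correct and follows essentially the same route as the paper: identify the representation with the left $\begin{bsmallmatrix}R & 0\\ R^{2} & R\end{bsmallmatrix}$-module $\begin{bsmallmatrix}M_{t}\\ M_{h}\end{bsmallmatrix}_{\tens{\psi}}$, match $\tens{\psi}$ with $\begin{psmallmatrix}\mu_{a} & \mu_{b}\end{psmallmatrix}$ and $\homs{\psi}$ with $\begin{psmallmatrix}\mu_{a}\\ \mu_{b}\end{psmallmatrix}$, and read off \ref{para-flats-injectives}. The only cosmetic difference is in the ``in which case'' clauses, where the paper factors both row and column maps through the block-diagonal $\begin{psmallmatrix}\mu_{a} & 0\\ 0 & \mu_{b}\end{psmallmatrix}$ while you instead isolate $\mu_{a}$ directly; both are one-line observations.
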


\begin{proof}
 Recall the notation from \eqref{tensor-hom-notation} in discussion \ref{para-tensor-hom-notation}. 
To begin we explain why there is a commutative diagram in $\lMod{R}$ of the form
    \[
 \begin{tikzcd}[ampersand replacement=\&, column sep = 1.5cm]
            M_{t}\arrow[rr, hook, "\Delta"]\arrow[d, "{\homs{\mu}}"']\arrow[drr, near start, "{\begin{psmallmatrix} \mu_{a} \\ \mu_{b}
\end{psmallmatrix}}"'] \&\& M_{t}^{2}\arrow[d, "{\begin{psmallmatrix} \mu_{a} & 0 \\ 0 & \mu_{b}
\end{psmallmatrix}}"']\arrow[rr, "\beta", "\simeq"']\arrow[drr, near end, "{\begin{psmallmatrix} \mu_{a} & \mu_{b}
\end{psmallmatrix}}"] \&\& R^{2}\otimes M_{t}\arrow[d, "{\tens{\mu}}"]
            \\
            \Hom{R}(R^{2},M_{h})\arrow[rr, "\simeq", "\alpha"']
            \&\& M_{h}^{2}\arrow[rr, two heads, "\Sigma"'] \&\& M_{h}
        \end{tikzcd}
    \]
    where 
for all $f\in \Hom{R}(R^{2},M_{h})$ and $m,m',m''\in M_{t}$ we let $\Delta(m'')=(m'',m'')$, $\Sigma(m,m')=m+m'$ and
  \[
  \begin{array}{cc}
    \alpha\colon \Hom{R}(R^{2},M_{h})\to M_{h}^{2},
    f\mapsto (f(1,0),f(0,1)),
       &  
    \beta \colon M_{t}^{2} \to R^{2}\otimes M_{t},
    (m,m')\mapsto (0,1)\otimes m' + (1,0)\otimes m.
  \end{array}
  \]
Now letting $\tens{\mu}\coloneqq (\mu_{a} \,\, \mu_{b})\beta^{-1}$ it is automatic that the upper-right triangle commutes. 
In fact, observing 
  \[
  \homs{\mu}(m)(r,r')=\tens{\mu}((r,r')\otimes m)=\tens{\mu}((1,0)\otimes rm+(0,1)\otimes r'm)=\tens{\mu}(\beta(rm,r'm))=\mu_{a}(rm)+\mu_{b}(r'm)
  \]
  for all $r,r'\in R$ and $m\in M_{t}$, we have $\alpha(\homs{\mu}(m))=(\mu_{a}(m),\mu_{b}(m))$ for all such $m$. 
We can observe from the commutative  diagram that $\cok{\begin{psmallmatrix} \mu_{a} & \mu_{b}
\end{psmallmatrix}}\cong \cok(\tens{\mu})$ and that $\ker{\begin{psmallmatrix} \mu_{a} \\ \mu_{b}
\end{psmallmatrix}}\cong \ker(\homs{\mu})$.  
Furthermore, in terms of \Cref{rem-well-known-equivalences}, the representation  $(M_{t},M_{h};\mu_{a},\mu_{b})$ corresponds to the left $\begin{bsmallmatrix}
R & 0
\\
R^{2} & R
\end{bsmallmatrix}$-module  $\begin{bsmallmatrix}
M_{t}
\\
M_{h}
\end{bsmallmatrix}_{\tens{\mu}}$. 

Hence the stated characterisations are the direct translations of those from \ref{para-flats-injectives}. 
Of course, assuming $\begin{psmallmatrix} \mu_{a} & \mu_{b}
\end{psmallmatrix}=\Sigma \begin{psmallmatrix} \mu_{a} & 0 \\ 0 & \mu_{b}
\end{psmallmatrix}$ is injective implies $\begin{psmallmatrix} \mu_{a} & 0 \\ 0 & \mu_{b}
\end{psmallmatrix}$ is injective, and hence that $\begin{psmallmatrix} \mu_{a} \\ \mu_{b}
\end{psmallmatrix}=\begin{psmallmatrix} \mu_{a} & 0 \\ 0 & \mu_{b}
\end{psmallmatrix} \Delta$ is injective. 
Dually if $\begin{psmallmatrix} \mu_{a} \\ \mu_{b}
\end{psmallmatrix}$ is surjective then so is $\begin{psmallmatrix} \mu_{a} & \mu_{b}
\end{psmallmatrix}$, as required. 
\end{proof}


  

The injectivity of the map  ${\begin{psmallmatrix} \mu_{a} \\ \mu_{b}
\end{psmallmatrix}}$ in \Cref{lem-characterising-flat-and-injective-kronecker-mods}(1) is key in what follows.

\begin{defn}
    \label{defn-cat-of-relations-as-a-cat-of-kron-mods}
    Let $\lRel{R}(\Kron)$ be the subcategory of $\lRep{R}(\Kron)$ consisting of objects $(M_{t},M_{h};\mu_{a},\mu_{b})$ such that the morphism  ${\begin{psmallmatrix} \mu_{a} \\ \mu_{b}
\end{psmallmatrix}}\colon M_{t}\to M_{h}^{2}$ is injective, that is, such that $\ker(\mu_{a})\cap\ker(\mu_{b})=0$. 
\end{defn}

Consider a pair of morphisms in $\lRep{R}(\Kron)$, whose composition is $0$, defining a sequence of the form 
\begin{equation}
\label{eqn-exact-seq-in-rep}
    \begin{tikzcd}        (L_{t},L_{h};\lambda_{a},\lambda_{b})\arrow[rr, "{(f_{t},f_{h})}"]
        &&
        (M_{t},M_{h};\mu_{a},\mu_{b})\arrow[rr, "{(g_{t},g_{h})}"]
        &&
        (N_{t},N_{h};\eta_{a},\eta_{b})
    \end{tikzcd}
\end{equation}
    As discussed in \Cref{rem-well-known-equivalences}, $\eqref{eqn-exact-seq-in-rep}$ is  exact if and only if the  diagrams in $\lMod{R}$ below are exact sequences
\begin{equation}
           \label{eqn-exact-seq-in-rmod}
      \begin{array}{cc}
    \begin{tikzcd}
        L_{t}\arrow[r, "{f_{t}}"]
        &
        M_{t}\arrow[r, "{g_{t}}"]
        &
        N_{t}\end{tikzcd}
        &
        \begin{tikzcd}
    L_{h}\arrow[r, "{f_{h}}"]
        &
        M_{h}\arrow[r, "{g_{h}}"]
        &
        N_{h}
            \end{tikzcd}
      \end{array}
\end{equation}

\begin{lem}
    \label{lem-exact-structure}
    Given an exact sequence of the form $\eqref{eqn-exact-seq-in-rep}$ such  that $f_{h}$ is injective, if $(L_{t},L_{h};\lambda_{a},\lambda_{b})$ and $(N_{t},N_{h};\eta_{a},\eta_{b})$ lie in the subcategory $\lRel{R}(\Kron)$ then so too does $(M_{t},M_{h};\mu_{a},\mu_{b})$. 
\end{lem}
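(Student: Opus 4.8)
The plan is to verify directly the defining condition of $\lRel{R}(\Kron)$ from \Cref{defn-cat-of-relations-as-a-cat-of-kron-mods} for the middle term, namely that $\ker(\mu_{a})\cap\ker(\mu_{b})=0$. So I would fix $m\in\ker(\mu_{a})\cap\ker(\mu_{b})\subseteq M_{t}$ and aim to show $m=0$ by a diagram chase through \eqref{eqn-exact-seq-in-rep}, exploiting its translation \eqref{eqn-exact-seq-in-rmod} into a pair of exact sequences of $R$-modules together with the intertwining identities that define morphisms in $\lRep{R}(\Kron)$.

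First I would push $m$ forward into $N_{t}$. Since $(g_{t},g_{h})$ is a morphism in $\lRep{R}(\Kron)$ we have $\eta_{c}g_{t}=g_{h}\mu_{c}$ for $c=a,b$, so $\eta_{a}(g_{t}(m))=g_{h}(\mu_{a}(m))=0$ and likewise $\eta_{b}(g_{t}(m))=0$. As $(N_{t},N_{h};\eta_{a},\eta_{b})$ lies in $\lRel{R}(\Kron)$ this gives $g_{t}(m)\in\ker(\eta_{a})\cap\ker(\eta_{b})=0$, hence $m\in\ker(g_{t})$. By exactness of the left-hand sequence in \eqref{eqn-exact-seq-in-rmod} we get $m\in\im(f_{t})$, say $m=f_{t}(\ell)$ for some $\ell\in L_{t}$.

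Next I would use this lift together with the injectivity of $f_{h}$. Since $(f_{t},f_{h})$ is a morphism we have $f_{h}\lambda_{c}=\mu_{c}f_{t}$, so $f_{h}(\lambda_{a}(\ell))=\mu_{a}(f_{t}(\ell))=\mu_{a}(m)=0$, and injectivity of $f_{h}$ forces $\lambda_{a}(\ell)=0$; the same argument gives $\lambda_{b}(\ell)=0$. Because $(L_{t},L_{h};\lambda_{a},\lambda_{b})$ lies in $\lRel{R}(\Kron)$ we conclude $\ell\in\ker(\lambda_{a})\cap\ker(\lambda_{b})=0$, whence $m=f_{t}(\ell)=0$, as required.

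This is a routine chase and I do not expect a serious obstacle; the only point deserving care is the bookkeeping of which hypotheses are invoked where. In particular the argument uses only exactness of the $t$-component sequence and injectivity of $f_{h}$ — neither injectivity of $f_{t}$ nor exactness of the $h$-component sequence is needed — so I would make sure the write-up reflects exactly this minimal usage rather than appealing to the full exactness of \eqref{eqn-exact-seq-in-rep} indiscriminately.
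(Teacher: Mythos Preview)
Your proof is correct and follows essentially the same diagram chase as the paper's own argument: push $m$ to $N_{t}$ to get $g_{t}(m)=0$, lift via exactness at $M_{t}$ to some $\ell\in L_{t}$, then use injectivity of $f_{h}$ together with the $\lRel{R}(\Kron)$ condition on the left term to conclude $\ell=0$. Your closing remark that only exactness of the $t$-component and injectivity of $f_{h}$ are actually used is accurate and a nice observation, though the paper does not make this explicit.
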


\begin{proof}
    Fix $m\in M_{t}$ such that $\mu_{a}(m)=0$ and $\mu_{b}(m)=0$, and it remains to prove that $m=0$. 
    Since $(g_{t},g_{h})$ is a morphism in $\lRep{R}(\Kron)$ we have $\eta_{a}(g_{t}(m))=g_{h}(\mu_{a}(m))=0$ and similarly $\eta_{b}(g_{t}(m))=0$ and so, since $(N_{t},N_{h};\eta_{a},\eta_{b})$ lies in $\lRel{R}(\Kron)$, we have $g_{t}(m)=0$. 
    
    By assumption the sequences in $\eqref{eqn-exact-seq-in-rmod}$ are exact, and so $m=f_{t}(\ell)$ for some $\ell\in L_{t}$. 
     Since $(f_{t},f_{h})$ is a morphism in $\lRep{R}(\Kron)$ we have $f_{h}(\lambda_{a}(\ell))=\mu_{a}(m)=0$ and so $\lambda_{a}(\ell)=0$ since $f_{h}$ is injective.  
     Similarly $\lambda_{b}(\ell)=0$. 
     Since $(L_{t},L_{h};\lambda_{a},\lambda_{b})$ lies in $\lRel{R}(\Kron)$ this means $\ell=0$ and so $m=0$. 
\end{proof}

In \Cref{rem-small-conv-fails} we see how  the converse of \Cref{lem-exact-structure} fails. 
This follows the representation theory of the Kronecker quiver in the classical case where $R$ is a field: the exact sequence we consider is an Auslander--Reiten sequence defined by the extension of the simple injective module by the simple projective.

\begin{rem}
\label{rem-small-conv-fails}
    Let $L_{t}=N_{h}=0$, $L_{h}=M_{t}=M_{h}=N_{t}=R$ and $\mu_{a}=\mu_{b}=\idfunc{R}$. 
    Then $(L_{t},L_{h};\lambda_{a},\lambda_{b})$ and $(M_{t},M_{h};\mu_{a},\mu_{b})$ lie in $\lRel{R}(\Kron)$,  but $(N_{t},N_{h};\eta_{a},\eta_{b})$  does not lie in $\lRel{R}(\Kron)$. 
    \end{rem}

 \begin{lem}
        \label{lem-limits-and-coproducts-relations}
        The category $\lRel{R}(\Kron)$ is closed under subobjects, extensions, limits and coproducts. 
    \end{lem}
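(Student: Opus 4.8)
The plan is to verify each of the four closure properties directly in terms of the defining condition $\ker(\mu_a)\cap\ker(\mu_b)=0$, equivalently the injectivity of $\begin{psmallmatrix}\mu_a\\\mu_b\end{psmallmatrix}$, using the concrete description of (co)limits in the functor category $\lRep{R}(\Kron)$, which are computed component-wise in $\lMod{R}$.

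\medskip

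First, for subobjects: if $(f_t,f_h)\colon(L_t,L_h;\lambda_a,\lambda_b)\to(M_t,M_h;\mu_a,\mu_b)$ is a monomorphism in $\lRep{R}(\Kron)$ with codomain in $\lRel{R}(\Kron)$, then $f_t$ is injective (monomorphisms in a functor category are component-wise mono). Given $\ell\in\ker(\lambda_a)\cap\ker(\lambda_b)$, the intertwining relations $f_h\lambda_c=\mu_c f_t$ for $c=a,b$ give $\mu_a(f_t(\ell))=\mu_b(f_t(\ell))=0$, so $f_t(\ell)=0$ by the hypothesis on $M$, hence $\ell=0$. For extensions: this is precisely \Cref{lem-exact-structure}, since a short exact sequence $0\to(L_t,L_h;\dots)\to(M_t,M_h;\dots)\to(N_t,N_h;\dots)\to0$ has $f_h$ injective by exactness of the head component in $\eqref{eqn-exact-seq-in-rmod}$; alternatively one can observe that $\lRel{R}(\Kron)$ being closed under subobjects and products already forces closure under extensions via the pullback characterisation, but citing \Cref{lem-exact-structure} is cleanest.

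\medskip

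For limits: a limit of a diagram $\{(M_t^{(i)},M_h^{(i)};\mu_a^{(i)},\mu_b^{(i)})\}$ is computed component-wise, so the limit object is $(\lim M_t^{(i)},\lim M_h^{(i)};\lim\mu_a^{(i)},\lim\mu_b^{(i)})$, with structure maps the limits of the structure maps. An element of $\ker(\mu_a)\cap\ker(\mu_b)$ in $\lim M_t^{(i)}$ is a compatible family $(m_i)$ with each $m_i\in\ker(\mu_a^{(i)})\cap\ker(\mu_b^{(i)})$; each $m_i=0$ by hypothesis, so the family is zero. (Equivalently: $\lRel{R}(\Kron)$ is closed under products and — being closed under subobjects — under equalizers, hence under all limits.) For coproducts: a coproduct in $\lRep{R}(\Kron)$ is the component-wise direct sum, and $\begin{psmallmatrix}\bigoplus\mu_a^{(i)}\\\bigoplus\mu_b^{(i)}\end{psmallmatrix}=\bigoplus\begin{psmallmatrix}\mu_a^{(i)}\\\mu_b^{(i)}\end{psmallmatrix}$, which is injective because a direct sum of injective maps is injective. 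Concretely, an element of the coproduct of the $M_t^{(i)}$ lying in both kernels is a finitely-supported family each of whose entries lies in $\ker(\mu_a^{(i)})\cap\ker(\mu_b^{(i)})=0$.

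\medskip

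I expect no genuine obstacle here: once one records that (co)limits in $\lRep{R}(\Kron)\simeq\lMod{R\Kron}$ are formed component-wise, every verification is immediate from the pointwise nature of the condition $\ker(\mu_a)\cap\ker(\mu_b)=0$. The only point requiring a little care is extensions, where one must know the middle term's structure maps and invoke injectivity of $f_h$; this is exactly the content of \Cref{lem-exact-structure}, so I would simply appeal to it. It is worth noting explicitly that, since $\lRep{R}(\Kron)$ is a module category, closure under subobjects and products already yields closure under all limits and under extensions by the general torsion-theoretic observations in \ref{para-co-complete-covering-torsion} and \ref{subsect-orthogonal}, so the lemma could even be stated as: $\lRel{R}(\Kron)$ is closed under subobjects, products and coproducts — the rest following formally.
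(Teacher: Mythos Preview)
Your proof is correct and follows essentially the same approach as the paper: both verify the defining condition $\ker(\mu_a)\cap\ker(\mu_b)=0$ componentwise, invoke \Cref{lem-exact-structure} for extensions, and deduce limits from products plus subobjects. The only cosmetic differences are that the paper obtains closure under subobjects as the special case $(L_t,L_h;\lambda_a,\lambda_b)=0$ of \Cref{lem-exact-structure} rather than by a direct check, and deduces closure under coproducts from the fact that a coproduct embeds in the corresponding product (hence is a subobject of something already shown to lie in $\lRel{R}(\Kron)$) rather than by your direct-sum-of-injections argument.
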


    \begin{proof}
        We firstly prove that $\lRel{R}(\Kron)$ is closed under products. 
       Let $I$ be a set, $(M_{t}(i),M_{h}(i);\mu_{a}(i),\mu_{b}(i))$ be an object in $\lRel{R}(\Kron)$ for each $i\in I$ and $(P_{t},P_{h};\rho_{a},\rho_{b})$ be their product in $\lRep{R}(\Kron)$. 
       By definition, 
       \[
       \begin{array}{cccc}
        P_{t}=\prod_{i\in I}M_{t}(i),
        &
        P_{h}=\prod_{i\in I}M_{h}(i),
        &
        \rho_{c}((m_{i}))=(\mu_{c}(i)(m_{i}))
        &
        (c=a,b)
       \end{array}
       \]
       where $m_{i}\in M_{t}(i)$ for each $i$.  
       Now suppose $\rho_{a}((m_{i}))=0$ and $\rho_{b}((m_{i}))=0$. 
       It follows that, for each $i$, we have $\mu_{a}(i)(m_{i})=0$ and $\mu_{b}(i)(m_{i})=0$, meaning that $m_{i}=0$ since $(M_{t}(i),M_{h}(i);\mu_{a}(i),\mu_{b}(i))$ lies in $\lRel{R}(\Kron)$. 
       Thus $(P_{t},P_{h};\rho_{a},\rho_{b})$ lies  in $\lRel{R}(\Kron)$, and so $\lRel{R}(\Kron)$ is closed under products. 

       We secondly check that $\lRel{R}(\Kron)$ is closed under subobjects. 
        To see this, let $(N_{t},N_{h};\eta_{a},\eta_{b})$ be an object in $\lRel{R}(\Kron)$, and let  $(g_{t},g_{h})$ be a monomorphism from an object $(M_{t},M_{h};\mu_{a},\mu_{b})$ in $\lRep{R}(\Kron)$ whose codomain is $(N_{t},N_{h};\eta_{a},\eta_{b})$. 
        Taking $(L_{t},L_{h};\lambda_{a},\lambda_{b})=0$ in \Cref{lem-exact-structure} it follows immediately that $(M_{t},M_{h};\mu_{a},\mu_{b})$ lies in $\lRel{R}(\Kron)$, and so $\lRel{R}(\Kron)$ is closed under subobjects. 

        Since being closed under limits is equivalent to being closed under products and kernels, it follows immediately that $\lRel{R}(\Kron)$ is closed under limits. 
        Likewise $\lRel{R}(\Kron)$ is closed under coproducts since they are subobjects of products. 
        By \Cref{lem-exact-structure} we also have that $\lRel{R}(\Kron)$ is extension closed. 
    \end{proof}

 By \ref{para-co-complete-covering-torsion}, \ref{para-tattar-2}  and \Cref{lem-limits-and-coproducts-relations} it follows: that every object in $\lRep{R}(\Kron)$ has a surjective $\lRel{R}(\Kron)$-envelope, and; that  $\lRel{R}(\Kron)$ is a torsion-free class in the abelian category $\lRep{R}(\Kron)$. 
       We construct this envelope explicitly in \Cref{lem-explicit-envelope}, and then we compute the corresponding torsion class in \Cref{lem-computing-torsion-class}.

    \begin{lem}
                \label{lem-explicit-envelope}
        For any $(M_{t},M_{h};\mu_{a},\mu_{b})$  in $\lRep{R}(\Kron)$, setting $K= \ker(\mu_{a})\cap \ker(\mu_{b})$ and
         \[
        \begin{array}{cccccc}
        L_{t}=M_{t}/K,
        &
        L_{h}= M_{h},
        &
        \lambda_{a}(m+K)= \mu_{a}(m),
        &
        \lambda_{b}(m+K)=\mu_{b}(m),
        &
        f_{t}(m)=m+K, 
        &
        f_{h}(m')=m'
        \end{array}
        \]
        for each $m\in M_{t}$ and $m'\in M_{h}$ defines an $\lRel{R}(\Kron)$-envelope  $(f_{t},f_{h})\colon (M_{t},M_{h};\mu_{a},\mu_{b})\to (L_{t},L_{h};\lambda_{a},\lambda_{b})$. 
    \end{lem}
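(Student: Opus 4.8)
The plan is to verify directly the ingredients packaged into the word ``envelope'': that $(L_{t},L_{h};\lambda_{a},\lambda_{b})$ is a well-defined object of $\lRel{R}(\Kron)$, that $(f_{t},f_{h})$ is a morphism in $\lRep{R}(\Kron)$, that it is an $\lRel{R}(\Kron)$-preenvelope of $(M_{t},M_{h};\mu_{a},\mu_{b})$, and finally that it is left minimal.

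First I would dispose of the easy checks. Since $K=\ker(\mu_{a})\cap\ker(\mu_{b})$ is contained in $\ker(\mu_{c})$ for $c=a,b$, each assignment $\lambda_{c}(m+K)=\mu_{c}(m)$ is a well-defined $R$-linear map $M_{t}/K\to M_{h}$, and $f_{t}$, $f_{h}$ are plainly $R$-linear. That $(L_{t},L_{h};\lambda_{a},\lambda_{b})$ lies in $\lRel{R}(\Kron)$ is immediate from \Cref{defn-cat-of-relations-as-a-cat-of-kron-mods}: if $\lambda_{a}(m+K)=0=\lambda_{b}(m+K)$ then $\mu_{a}(m)=0=\mu_{b}(m)$, so $m\in K$ and hence $m+K=0$. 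The morphism condition is the computation $f_{h}(\mu_{c}(m))=\mu_{c}(m)=\lambda_{c}(m+K)=\lambda_{c}(f_{t}(m))$ for $c=a,b$.

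Next I would establish the preenvelope property, which is where the defining condition of $\lRel{R}(\Kron)$ actually does work. Let $(h_{t},h_{h})\colon (M_{t},M_{h};\mu_{a},\mu_{b})\to (C_{t},C_{h};\gamma_{a},\gamma_{b})$ be a morphism in $\lRep{R}(\Kron)$ with $(C_{t},C_{h};\gamma_{a},\gamma_{b})$ in $\lRel{R}(\Kron)$; I must factor it through $(f_{t},f_{h})$. Since $f_{h}=\idfunc{M_{h}}$ and $f_{t}$ is the canonical surjection, the only possible factor is the pair $(a_{t},a_{h})$ with $a_{h}=h_{h}$ and $a_{t}(m+K)=h_{t}(m)$. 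The essential point is that $a_{t}$ is well-defined: if $m\in K$ then $\gamma_{c}(h_{t}(m))=h_{h}(\mu_{c}(m))=0$ for $c=a,b$, so $h_{t}(m)\in\ker(\gamma_{a})\cap\ker(\gamma_{b})=0$. A routine check then confirms $(a_{t},a_{h})$ is a morphism in $\lRep{R}(\Kron)$, via $a_{h}(\lambda_{c}(m+K))=h_{h}(\mu_{c}(m))=\gamma_{c}(h_{t}(m))=\gamma_{c}(a_{t}(m+K))$, and clearly $a_{t}f_{t}=h_{t}$ and $a_{h}f_{h}=h_{h}$.

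Finally, for left minimality I would observe that $f_{t}$ and $f_{h}$ are both surjective, so $(f_{t},f_{h})$ is an epimorphism in $\lRep{R}(\Kron)$; by the remark in \ref{para-AS-alt-terms} an epimorphic preenvelope is automatically an envelope (if $(a_{t},a_{h})(f_{t},f_{h})=(f_{t},f_{h})$ and $(f_{t},f_{h})$ is epic, then $(a_{t},a_{h})$ is the identity on $(L_{t},L_{h};\lambda_{a},\lambda_{b})$, hence an isomorphism). I expect the only genuinely substantive step to be the well-definedness of $a_{t}$, which is exactly the place the hypothesis $\ker(\gamma_{a})\cap\ker(\gamma_{b})=0$ on the target is consumed; the remaining verifications are bookkeeping. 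Alternatively one could deduce the lemma from the abstract existence statement recorded just before it (obtained via \ref{para-co-complete-covering-torsion}, \ref{para-tattar-2} and \Cref{lem-limits-and-coproducts-relations}) by identifying the stated data with the abstractly produced envelope up to isomorphism, but the direct argument above is shorter.
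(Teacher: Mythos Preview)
Your proof is correct and follows essentially the same route as the paper: verify $(L_{t},L_{h};\lambda_{a},\lambda_{b})\in\lRel{R}(\Kron)$, check $(f_{t},f_{h})$ is a morphism, establish the preenvelope property via the well-definedness of the induced map on $M_{t}/K$ (using $\ker(\gamma_{a})\cap\ker(\gamma_{b})=0$), and conclude left minimality from the fact that $(f_{t},f_{h})$ is an epimorphism. The only cosmetic difference is that you invoke \ref{para-AS-alt-terms} for the epimorphic-preenvelope step, while the paper phrases the same observation slightly differently.
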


\begin{proof}
    If $\lambda_{a}(m+K)=0$ and $\lambda_{b}(m+K)=0$ then $\mu_{a}(m)=0$ and $\mu_{b}(m)=0$, meaning $m\in K$ and so $m+K=0$. 
    Observe also that   $\lambda_{c}f_{t}=f_{h}\mu_{c}$ for each $c=a,b$. 
        Now let $(g_{t},g_{h})$ be a morphism of representations of the form $(M_{t},M_{h};\mu_{a},\mu_{b})\to (N_{t},N_{h};\eta_{a},\eta_{b})$ where 
        $(N_{t},N_{h};\eta_{a},\eta_{b})$ lies in $\lRel{R}(\Kron)$. 
        By definition, for any $m\in K$ this means $\eta_{a}(g_{t}(m))=g_{h}(\mu_{a}(m))=0$ and similarly $\eta_{b}(g_{t}(m))=0$, and so $g_{t}(m)=0$.         
        Thus there is a well-defined morphism $k_{t}\colon L_{t}\to N_{t}$ such that $k_{t}f_{t}=g_{t}$. 
        Setting $k_{h}=g_{h}$ then completes the construction of a morphism $(k_{t},k_{h})\colon (L_{t},L_{h};\lambda_{a},\lambda_{b})\to (N_{t},N_{h};\eta_{a},\eta_{b})$ such that $(k_{t},k_{h})(f_{t},f_{h})=(g_{t},g_{h})$. 

        This shows $(f_{t},f_{h})$ is a $\lRel{R}(\Kron)$-preenvelope. 
        Since $f_{t}$ and $f_{h}$ are both surjective, the pair $(f_{t},f_{h})$ defines an epimorphism by \Cref{lem-exact-structure}, and so $(f_{t},f_{h})$ is a $\lRel{R}(\Kron)$-envelope. 
\end{proof}

    \begin{lem}
    \label{lem-computing-torsion-class}
        An object $(M_{t},M_{h};\mu_{a},\mu_{b})$ in $\lRep{R}(\Kron)$ lies in ${}^{\perp}\lRel{R}(\Kron)$ if and only if $M_{h}=0$. 
    \end{lem}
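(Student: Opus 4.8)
The plan is to verify each implication directly, using the description of $^{\perp}\lRel{R}(\Kron)$ as those objects admitting no nonzero morphisms into any object of $\lRel{R}(\Kron)$, and exploiting the explicit envelope from \Cref{lem-explicit-envelope} and the torsion-theoretic machinery of \ref{para-tattar-1}. First I would prove the easy direction: if $M_{h}=0$, then for any object $(N_{t},N_{h};\eta_{a},\eta_{b})$ in $\lRel{R}(\Kron)$ and any morphism $(g_{t},g_{h})\colon (M_{t},0;0,0)\to (N_{t},N_{h};\eta_{a},\eta_{b})$, the commutativity conditions $\eta_{c}g_{t}=g_{h}\mu_{c}=0$ for $c=a,b$ force $\im(g_{t})\subseteq \ker(\eta_{a})\cap\ker(\eta_{b})=0$, since the target lies in $\lRel{R}(\Kron)$; hence $g_{t}=0$ and $g_{h}=0$. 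Thus $(M_{t},0;0,0)\in {}^{\perp}\lRel{R}(\Kron)$, and every object of $\lRep{R}(\Kron)$ with $M_{h}=0$ has this form (any $\mu_{a},\mu_{b}\colon M_{t}\to 0$ are zero).

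For the converse, suppose $(M_{t},M_{h};\mu_{a},\mu_{b})\in {}^{\perp}\lRel{R}(\Kron)$. The cleanest approach is to apply the $\lRel{R}(\Kron)$-envelope $(f_{t},f_{h})$ of \Cref{lem-explicit-envelope}: by \ref{para-tattar-1} the torsion-free reflection $\mathsf{f}$ of a torsion-free class is computed by this envelope, and an object lies in the associated torsion class $^{\perp}\CF$ exactly when its envelope is zero. Since $f_{h}=\idfunc{M_{h}}$ in the explicit construction, the target $(L_{t},L_{h};\lambda_{a},\lambda_{b})$ of the envelope has $L_{h}=M_{h}$, so the envelope is the zero object only if $M_{h}=0$. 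Alternatively, and more self-containedly, I would note that $\lRel{R}(\Kron)$ contains the object $(M_{h},M_{h};\idfunc{M_{h}},\idfunc{M_{h}})$ — indeed $\ker(\idfunc{M_{h}})\cap\ker(\idfunc{M_{h}})=0$ — and the morphism $(\mu_{a},\idfunc{M_{h}})\colon (M_{t},M_{h};\mu_{a},\mu_{b})\to (M_{h},M_{h};\idfunc{M_{h}},\idfunc{M_{h}})$ is a well-defined morphism in $\lRep{R}(\Kron)$ (one checks $\idfunc{M_{h}}\circ\mu_{c}=\idfunc{M_{h}}\circ\mu_{a}$ fails in general, so instead take $(0,\idfunc{M_{h}})$, which is a morphism precisely because $0=\idfunc{M_{h}}\circ 0$ and $\idfunc{M_{h}}\circ\mu_{c}=\idfunc{M_{h}}\circ\mu_{c}$ — this is not a morphism unless $\mu_{a}=\mu_{b}$). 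Because of this subtlety I expect the cleanest route is through the envelope, so I would present that as the main argument.

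Concretely, for the converse I would argue: the envelope $p_{M}^{\CF}=(f_{t},f_{h})\colon (M_{t},M_{h};\mu_{a},\mu_{b})\to (L_{t},L_{h};\lambda_{a},\lambda_{b})$ is, by \ref{para-tattar-1}, the torsion-free reflection, and its kernel is the torsion part; moreover an object lies in $^{\perp}\lRel{R}(\Kron)$ iff this reflection is $0$, iff the envelope is the zero map onto the zero object. Since $L_{h}=M_{h}$, having $(L_{t},L_{h};\lambda_{a},\lambda_{b})=0$ forces $M_{h}=0$.

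The main obstacle is making precise the statement ``$(M_{t},M_{h};\mu_{a},\mu_{b})$ lies in $^{\perp}\CF$ iff its $\CF$-reflection vanishes.''  This needs a small argument: if the reflection $\mathsf{f}(M)$ were nonzero, then the identity on $\mathsf{f}(M)$ would be a nonzero morphism from $M$ to an object of $\CF$ (precomposing with the reflection map, which is an epimorphism by \Cref{lem-explicit-envelope}, hence nonzero out of $M$), contradicting $M\in {}^{\perp}\CF$; conversely if $\mathsf{f}(M)=0$ then any morphism $M\to F$ with $F\in\CF$ factors through $\mathsf{f}(M)=0$ by the universal property of the preenvelope, so it is zero. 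This is routine once \ref{para-tattar-1} and \Cref{lem-explicit-envelope} are in hand, so the proof should be short.
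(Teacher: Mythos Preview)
Your proof is correct, but the converse direction takes a different route from the paper's. Your easy direction ($M_{h}=0\Rightarrow(M_{t},M_{h};\mu_{a},\mu_{b})\in{}^{\perp}\lRel{R}(\Kron)$) is identical to the paper's. For the other direction, you invoke the explicit envelope of \Cref{lem-explicit-envelope}: since the envelope morphism $(f_{t},f_{h})$ lands in $\lRel{R}(\Kron)$ and has $f_{h}=\idfunc{M_{h}}$, membership in ${}^{\perp}\lRel{R}(\Kron)$ forces $(f_{t},f_{h})=0$, hence $\idfunc{M_{h}}=0$, hence $M_{h}=0$. This is perfectly valid and pleasantly short once the envelope is available.

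The paper instead gives the direct construction you were groping for in your ``alternatively'' paragraph. The object you wanted is not $(M_{h},M_{h};\idfunc{M_{h}},\idfunc{M_{h}})$ but rather $(M_{h}^{2},M_{h};\pi_{1},\pi_{2})$, the ``full relation'' on $M_{h}$, where $\pi_{1},\pi_{2}$ are the two projections. This lies in $\lRel{R}(\Kron)$ trivially, and the map $(f_{t},f_{h})$ with $f_{t}(m)=(\mu_{a}(m),\mu_{b}(m))$ and $f_{h}=\idfunc{M_{h}}$ is a morphism in $\lRep{R}(\Kron)$ (the commutativity conditions $\pi_{c}\circ f_{t}=f_{h}\circ\mu_{c}$ are immediate), nonzero whenever $M_{h}\neq0$. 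Your envelope argument has the advantage of reusing established machinery; the paper's explicit morphism is more self-contained and in fact exhibits precisely the object that your attempted direct construction was missing.
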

    \begin{proof}
    Firstly suppose $M_{h}\neq0$, and let $M\coloneqq M_{h}$. 
    Let $N_{t}=M^{2}$ and $N_{h}=M$. 
    Define $\eta_{a},\eta_{b}\colon N_{t}\to N_{h}$ by $\eta_{a}(m,m')=m$ and $\eta_{b}(m,m')=m'$ for each $m,m'\in M$.  
    Hence if $\eta_{a}(m,m')=0$ and $\eta_{b}(m,m')=0$ then $(m,m')=(0,0)$, meaning $(N_{t},N_{h};\eta_{a},\eta_{b})$ lies in $\lRel{R}(\Kron)$. 
    Furthermore there is a morphism 
    $(f_{t},f_{h})\colon (M_{t},M_{h};\mu_{a},\mu_{b})\to (N_{t},N_{h};\eta_{a},\eta_{b})$ in $\lRep{R}(\Kron)$ defined by setting $f_{t}(m,m')=(\mu_{a}(m),\mu_{b}(m))$ and $f_{h}=\idfunc{M}$.  
    We have shown in this first case that $(M_{t},M_{h};\mu_{a},\mu_{b})$ cannot lie in ${}^{\perp}\lRel{R}(\Kron)$. 

    Secondly, suppose $M_{h}=0$, and let $(f_{t},f_{h})$ be an arbitrary morphism from $(M_{t},M_{h};\mu_{a},\mu_{b})$ to an object $(N_{t},N_{h};\eta_{a},\eta_{b})$ in $\lRel{R}(\Kron)$. 
    Hence $f_{h}=0$. 
    Furthermore, if $m\in M_{t}$ then $\eta_{a}(f_{t}(m))=f_{h}(\mu_{a}(m))=0$ and similarly $\eta_{b}(f_{t}(m))=0$, and since $(N_{t},N_{h};\eta_{a},\eta_{b})$ lies in $\lRel{R}(\Kron)$ this means $f_{t}(m)=0$, as required. 
    \end{proof}

Having established a torsion pair, we note when it is split in the sense discussed in \ref{para-tattar-5}.

    \begin{prop}
         \label{prop-characterising-split-torsion-pairs}
        $({}^{\perp}\lRel{R}(\Kron),\lRel{R}(\Kron))$ is split if and only if $R$ is a finite product of fields. 
    \end{prop}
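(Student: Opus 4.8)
The plan is to compute the torsion sequence $\exact{M}{T}{F}$ explicitly for every object and reduce its splitting to a statement about $R$-modules alone.

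First I would identify this sequence. Since $\lRel{R}(\Kron)$ is a torsion-free class in the abelian category $\lRep{R}(\Kron)$, the torsion-free reflection of $(M_{t},M_{h};\mu_{a},\mu_{b})$ is the envelope produced in \Cref{lem-explicit-envelope}, that is, the map $(f_{t},f_{h})$ onto $(M_{t}/K, M_{h};\lambda_{a},\lambda_{b})$ with $K=\ker(\mu_{a})\cap\ker(\mu_{b})$, $f_{h}=\idfunc{M_{h}}$ and $f_{t}$ the canonical surjection; its kernel, computed componentwise, is $(K,0;0,0)$, which lies in ${}^{\perp}\lRel{R}(\Kron)$ by \Cref{lem-computing-torsion-class}. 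Hence $\exact{M}{T}{F}$ is
\[
0\to (K,0;0,0)\to (M_{t},M_{h};\mu_{a},\mu_{b})\to (M_{t}/K, M_{h};\lambda_{a},\lambda_{b})\to 0.
\]

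Next I would prove that this sequence splits in $\lRep{R}(\Kron)$ if and only if the underlying sequence $0\to K\to M_{t}\to M_{t}/K\to 0$ splits in $\lMod{R}$. A splitting of the former restricts on $t$-components to a splitting of the latter. Conversely, given an $R$-linear section $s_{t}\colon M_{t}/K\to M_{t}$ of the canonical surjection, the pair $(s_{t},\idfunc{M_{h}})$ is a morphism $(M_{t}/K, M_{h};\lambda_{a},\lambda_{b})\to (M_{t},M_{h};\mu_{a},\mu_{b})$ splitting $(f_{t},f_{h})$: the identity $\mu_{c}s_{t}=\lambda_{c}$ needed for it to be a morphism holds because $s_{t}(m+K)-m\in K\subseteq\ker(\mu_{c})$, so $\mu_{c}(s_{t}(m+K))=\mu_{c}(m)=\lambda_{c}(m+K)$ for all $m\in M_{t}$ and $c=a,b$.

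Given this reduction, the two implications are quick. If $R$ is a finite product of fields then $R$ is semisimple, every short exact sequence of $R$-modules splits, and so the torsion pair is split. Conversely, if the torsion pair is split, then for an arbitrary $R$-module $M$ and submodule $N\subseteq M$ one applies the reduction to the object with $M_{t}=M$, $M_{h}=M/N$, $\mu_{a}$ the canonical surjection and $\mu_{b}=0$, for which $K=N$; splitness forces $0\to N\to M\to M/N\to 0$ to split, so $N$ is a direct summand of $M$. Thus every $R$-module is semisimple, $R$ is a semisimple ring, and being commutative it is a finite product of fields by the Artin--Wedderburn theorem. The only delicate point is the reduction step: one must notice that an $R$-linear section of $M_{t}\to M_{t}/K$ automatically commutes with $\mu_{a}$ and $\mu_{b}$, which is exactly because $K$ is annihilated by both.
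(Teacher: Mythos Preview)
Your proof is correct and follows essentially the same approach as the paper: both explicitly identify the torsion sequence via \Cref{lem-explicit-envelope} and \Cref{lem-computing-torsion-class}, reduce its splitting to the splitting of $0\to K\to M_{t}\to M_{t}/K\to 0$ in $\lMod{R}$, and then test this against an arbitrary short exact sequence of $R$-modules. The only cosmetic differences are that the paper phrases the reduction via a retraction $(\rho_{t},0)$ of $(\iota_{t},0)$ rather than a section $(s_{t},\idfunc{M_{h}})$ of $(f_{t},f_{h})$, and for the converse the paper chooses the test object $(M,N;\pi,\pi)$ with $\pi$ surjective rather than your $(M,M/N;\text{proj},0)$; in both choices the resulting $K$ is the given submodule, so the arguments are interchangeable.
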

    
    \begin{proof}
        Since $R$ is assumed to be a commutative ring, we note that $R$ is a finite  product of fields if and only if it is a semisimple ring by the Wedderburn--Artin theorem. 
        Given any object $(M_{t},M_{h};\mu_{a},\mu_{b})$ in $\lRep{R}(\Kron)$, by \Cref{lem-limits-and-coproducts-relations}  and \Cref{lem-explicit-envelope} there is a short exact sequence
   \[
    \xi_{(M;\mu)}\colon \begin{tikzcd}        0\arrow[r]
    &
    (M_{t}',0;0,0)\arrow[rr, "{(\iota_{t},0)}"]
        &&
        (M_{t},M_{h};\mu_{a},\mu_{b})\arrow[rr, "{(\pi_{t},\idfunc{M_{h}})}"]
        &&
        (M_{t}/M_{t}',M_{h};\overline{\mu}_{a},\overline{\mu}_{b})\arrow[r]
        &
        0
    \end{tikzcd}
    \]
    where $M_{t}'\coloneqq \ker(\mu_{a})\cap \ker(\mu_{b})$,  $\iota_{t}\colon M'_{t}\to M_{t}$ is the inclusion map, $\pi_{t}\colon M_{t}\to M_{t}/M_{t}'$ is the quotient map and, for each $x=a,b$, the map  $\overline{\mu}_{x}\colon M_{t}/M_{t}'\to M_{h}$ sends $m+M_{t}'$ to $\mu_{x}(m)$. 
    By \Cref{lem-exact-structure}, the fact that $\xi_{(M;\mu)}$ is a short exact sequence follows from the exactness of $\xi_{M}\colon 0\to M_{t}'\to M_{t}\to M_{t}/M_{t}'\to 0$. 

    On the one hand, if $R$ is semisimple then the short exact sequence $\xi_{M}$ of $R$-modules splits, giving a retract $\rho_{t}\colon M_{t}\to M_{t}'$ of $\iota_{t}$, and this defines a retract $(\rho_{t},0)$ of $(\iota_{t},0)$ in  $\lRep{R}(\Kron)$. 
    
    On the other hand, any exact sequence  $0\to L\to M\to N\to 0$  in $\lMod{R}$ is given by the kernel $L=\ker(\pi)$ of a surjective map $\pi\colon M\to N$, in which case there is an object $(M,N;\pi,\pi)$ in $\lRep{R}(\Kron)$. As above, assuming the corresponding exact sequence $ \xi_{(M,N;\pi,\pi)}$ splits then so does  $0\to L\to M\to N\to 0$. Thus if any $\xi_{(M;\mu)}$ splits then $R$ is semisimple, as required. 
    \end{proof}




Colimits in $\lRep{R}(\Kron)$ are computed pointwise, since it is a category of functors with a cocomplete target; see \Cref{rem-well-known-equivalences}. 
Hence \Cref{lem-directed-colimits-closure} follows from the fact that directed colimits: of exact sequences are exact; and commute with finite limits. 
Never-the-less, for accessibility we provide a detailed proof below.

\begin{lem}
    \label{lem-directed-colimits-closure}
    $\lRel{R}(\Kron)$ is closed under directed colimits.
\end{lem}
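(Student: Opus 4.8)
The plan is to exploit that $\lRep{R}(\Kron)$ is the functor category $\mathrm{Fun}(\Kron,\lMod{R})$ with cocomplete target, so that a directed system $\{(M_t(i),M_h(i);\mu_a(i),\mu_b(i))\}_{i\in I}$ over a directed poset $I$ has colimit computed componentwise: its underlying representation is $(\varinjlim_i M_t(i),\varinjlim_i M_h(i);\mu_a,\mu_b)$, where for $c=a,b$ the map $\mu_c$ is the unique $R$-linear map induced on colimits by the $\mu_c(i)$. By \Cref{defn-cat-of-relations-as-a-cat-of-kron-mods} it then suffices to prove $\ker(\mu_a)\cap\ker(\mu_b)=0$ under the hypothesis that $\ker(\mu_a(i))\cap\ker(\mu_b(i))=0$ for every $i$.

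First I would fix notation for the colimit cone: write $\nu^t_i\colon M_t(i)\to\varinjlim_i M_t(i)$ and $\nu^h_i\colon M_h(i)\to\varinjlim_i M_h(i)$ for the canonical maps, and $\theta^t_{ij}\colon M_t(i)\to M_t(j)$, $\theta^h_{ij}\colon M_h(i)\to M_h(j)$ for the transition maps ($i\le j$), recalling that each $(\theta^t_{ij},\theta^h_{ij})$ is a morphism in $\lRep{R}(\Kron)$, so $\theta^h_{ij}\mu_c(i)=\mu_c(j)\theta^t_{ij}$. I would then invoke the standard description of directed colimits of modules: every element of $\varinjlim_i M_t(i)$ has the form $\nu^t_i(m)$ for some $i$ and $m\in M_t(i)$; moreover $\nu^t_i(m)=0$ if and only if $\theta^t_{ij}(m)=0$ for some $j\ge i$; and likewise for $M_h$. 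Finally, compatibility of the induced maps with the cones reads $\mu_c\,\nu^t_i=\nu^h_i\,\mu_c(i)$.

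The core of the argument is then a short diagram chase. Take $x\in\ker(\mu_a)\cap\ker(\mu_b)$ and write $x=\nu^t_i(m)$. From $0=\mu_a(x)=\nu^h_i(\mu_a(i)(m))$ I get $j_a\ge i$ with $\theta^h_{ij_a}(\mu_a(i)(m))=0$, that is $\mu_a(j_a)(\theta^t_{ij_a}(m))=0$; symmetrically there is $j_b\ge i$ with $\mu_b(j_b)(\theta^t_{ij_b}(m))=0$. Using directedness of $I$, pick $j\ge j_a,j_b$; then $\mu_a(j)(\theta^t_{ij}(m))=0$ and $\mu_b(j)(\theta^t_{ij}(m))=0$, so $\theta^t_{ij}(m)\in\ker(\mu_a(j))\cap\ker(\mu_b(j))=0$ since $(M_t(j),M_h(j);\mu_a(j),\mu_b(j))$ lies in $\lRel{R}(\Kron)$. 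Hence $x=\nu^t_i(m)=\nu^t_j(\theta^t_{ij}(m))=0$, completing the proof.

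As the preceding remark indicates, one could instead argue abstractly: directed colimits of $R$-modules are exact and commute with finite limits, so $\varinjlim_i M_h(i)^2\cong(\varinjlim_i M_h(i))^2$ and the map $\begin{psmallmatrix}\mu_a\\\mu_b\end{psmallmatrix}$ on the colimit is the directed colimit of the injections $\begin{psmallmatrix}\mu_a(i)\\\mu_b(i)\end{psmallmatrix}$, hence injective. I expect no genuine obstacle here; the only thing requiring care is the elementary bookkeeping with the colimit cone — representing an element, detecting when it vanishes, and using directedness to merge the two indices $j_a,j_b$ into a single $j$ — which is precisely the routine but instructive content of the \emph{detailed proof} that is worth spelling out.
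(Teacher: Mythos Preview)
Your proposal is correct and follows essentially the same approach as the paper: both use that colimits in $\lRep{R}(\Kron)$ are computed pointwise and then perform an element-level chase to show $\ker(\mu_a)\cap\ker(\mu_b)=0$. The only difference is packaging: you invoke the standard characterisation of directed colimits of modules (every element lifts to some stage, and vanishes iff it dies at a later stage), whereas the paper unwinds the explicit quotient presentation $(\bigoplus_i M_x^i)/N_x$ and tracks the defining relations by hand --- your version is the cleaner of the two, and the abstract argument you sketch at the end is exactly what the paper's own preamble to the lemma acknowledges would suffice.
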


\begin{proof}   
Let $(I,\leq)$ be a directed set and $(M^{i}_{t},M^{i}_{h};\mu^{i}_{a},\mu^{i}_{b})$ be an object in $\lRep{R}(\Kron)$ for each $i\in I$. 
For each $i,j\in I$ with $i\leq j$ let $f^{ij}=(f^{ij}_{t},f^{ij}_{h})\in\Hom{\lRep{R}(\Kron)}(M^{i}_{t},M^{i}_{h};\mu^{i}_{a},\mu^{i}_{b}), (M^{j}_{t},M^{j}_{h};\mu^{j}_{a},\mu^{j}_{b}))$ such that $f^{ii}$ is the identity and $f^{ik}=f^{jk}f^{ij}$ whenever $i\leq j\leq k$. 
Let   $(M_{t},M_{h};\mu_{a},\mu_{b})$ be the colimit of this direct system in $\lRep{R}(\Kron)$. 
So for all  $c=a,b$ and $i,j\in I$ with $i\leq j$ we have a commutative diagram   in $\lMod{R}$ of the form
    \[
    \begin{tikzcd}[column sep = 1.5cm]
    M_{t}^{i}
    \arrow[r, 
 "{\mu_{c}^{i}}"']
    \arrow[dr, bend right, near start, "{\varphi_{t}^{i}}"']
    \arrow[rrr, bend left, near start, "{f_{t}^{ij}}"']
    &
    M_{h}^{i}\arrow[drr, bend left, near end, "{\varphi_{h}^{i}}"']
    \arrow[rrr, bend left, near end, "{f_{h}^{ij}}"']
    &
    &
    M_{t}^{j}
    \arrow[r, 
"{\mu_{c}^{j}}"']
    \arrow[dll, bend right, near end, "{\varphi_{t}^{j}}"]
    &
    M_{h}^{j}\arrow[dl, bend left, near start, "{\varphi_{h}^{j}}"]
    \\
    &
    M_{t}
\arrow[rr, 
"{\mu_{c}}"]
    &
    &
    M_{h}
    &
    \end{tikzcd}
    \]
   Since colimits in $\lRep{R}(\Kron)$ are computed pointwise in $\lMod{R}$, for each $x=h,t$ we have $M_{x}= (\bigoplus_{i} M_{x}^{i})/N_{t}$ 
where $(m^{i})$ lies in $N_{x}$ 
precisely when there exists $k\in I$ with $m^{i}=0$ for $i<k$ and $m^{i}=f_{x}^{ki}(m^{k})$ 
for $i\geq k$.  
    Furthermore if $\ell\in M_{x}^{j}$ then $\varphi_{x}^{j}(\ell)=(m^{i})+N_{x}$ where $m^{j}=\ell$ and $m^{i}=0$ for $i\neq j$. 
Thus by the commutativity of the diagram we have $\mu_{c}((m^{i})+N_{t})=(\mu_{c}^{i}(m^{i}))+N_{h}$ for each $c=a,b$. 

    We now assume $(M_{t}^{i},M_{h}^{i};\mu_{a}^{i},\mu_{b}^{i})$ defines an object in $\lRel{R}(\Kron)$, meaning $\ker(\mu_{a}^{i})\cap\ker(\mu_{b}^{i})=0$, for each $i$. 
    From here, to complete the proof it suffices to prove that $(m^{i})\in N_{t}$ assuming $(m^{i})\in \ker(\mu_{a})\cap\ker(\mu_{b})$. 
    The assumption says that, for each $c=a,b$, there exists $k(c)\in I$ such that $\mu_{c}^{i}(m^{i})=0$ for $i<k(c)$ and 
\[
\mu_{c}^{i}(m^{i})=f_{h}^{k(c)i}(\mu_{c}^{k(c)}(m^{k(c)}))=\mu_{c}^{i}(f_{t}^{k(c)i}(m^{k(c)}))\text{, and so }m^{i}-f_{t}^{k(c)i}(m^{k(c)})\in \ker(\mu_{c}^{i}),
\]
 for $i\geq k(c)$. 
    Without loss of generality, $k(a)\geq k(b)$. 
    From here we claim 
    \[
    \begin{array}{ccc}
    \text{(1) }m^{i}=0\text{ if }i<k(b),
    &
    \text{(2) }m^{i}=f_{t}^{k(b)i}(m^{k(b)})\text{ if }k(b)\leq i <k(a),
    &
    \text{(3) }m^{i}=f_{t}^{k(a)i}(m^{k(a)})\text{ if }k(a)\leq i. 
    \end{array}
    \]
If we show claims (1), (2) and (3) hold then it follows that $(m^{i})=(m^{i}_{b})+(m^{i}_{a})$ where $(m^{i}_{b}),(m^{i}_{a})\in\bigoplus_{i}M_{t}^{i}$ are defined by $m^{i}_{b}\coloneqq0$ for $i<k(b)$, $m^{i}_{b}\coloneqq f_{t}^{k(b)i}(m^{k(b)})$ for $i\geq k(b)$,  $m^{i}_{a}\coloneqq0$ for $i<k(a)$ and $m^{i}_{a}\coloneqq f_{t}^{k(b)i}(m^{k(a)}-f_{t}^{k(b)k(a)}(m^{k(b)}))$ for $i\geq k(a)$. 
Thus to complete the proof we just check (1), (2) and (3). 

For each equation it is necessary and sufficient to prove that the difference of the given expressions lies in $\ker(\mu_{a}^{i})\cap\ker(\mu_{b}^{i})=0$. 
Recall, for each $c=a,b$, that $m^{i}-f_{t}^{k(c)i}(m^{k(c)})\in \ker(\mu_{c}^{i})$ for $i\geq k(c)$.

(1) If $i<k(b)$ then also $i<k(a)$ and so  $\mu_{b}^{i}(m^{i})=0=\mu_{b}^{i}(m^{i})$. 

(2) If $i<k(a)$ then $m^{i}\in \ker(\mu_{a}^{i})$ and also $\mu^{i}_{a}f_{t}^{k(b)i}=f^{k(b)i}_{h}\mu^{k(b)}_{a}$ and so $f_{t}^{k(b)i}(m^{k(b)})\in \ker(\mu_{a}^{i})$. 

(3) Note that $m^{i}-f_{t}^{k(a)i}(m^{k(a)})=x+f_{t}^{k(a)i}(y)$ where $x=m^{i}-f_{t}^{k(b)i}(m^{k(b)})$ and $y=f_{t}^{k(b)k(a)}m^{k(b)}-m^{k(a)}$, and when $k(b)\leq k(a)\leq i$ we have $x\in \ker(\mu^{i}_{b})$, $\mu^{i}_{b}f_{t}^{k(a)i}=f_{h}^{k(a)i}\mu_{b}^{k(a)}$ and $y\in \ker(\mu_{b}^{k(a)}$ since $k(b)\leq k(a)$.  
\end{proof}

        By \Cref{rem-small-conv-fails} we have that  $\lRel{R}(\Kron)$ is never closed under quotients, and by \ref{para-tattar-2} this means that defining monomorphic $\lRel{R}(\Kron)$-covers of arbitrary objects in $\lRep{R}(\Kron)$ is impossible. 
        Never-the-less, as we shall see in \Cref{thm-main-general}, $\lRel{R}(\Kron)$ is still covering.

\begin{thm}
    \label{thm-main-general}
    As an $R$-linear subcategory of $\lMod{R\Kron}$, $\lRel{R}(\Kron)$ is: precovering and enveloping; closed under extensions,  limits, filtered colimits and coproducts; and a definable subcategory of $\lMod{R\Kron}$ containing every flat $R\Kron$-module. 
Consequently $\lRel{R}$ is the torsion-free class in a faithful hereditary torsion theory, and in particular, it is covering and quasi-abelian (and hence exact). 
\end{thm}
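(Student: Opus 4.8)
The plan is to assemble the statement from the lemmas of \S\ref{sec-linrels} together with the general theory recalled in \S\ref{sec-prelims}; the substantive work is already done, and what remains is organisation. I work inside the abelian category $\lRep{R}(\Kron)$, transporting conclusions to $\lMod{R\Kron}$ via the equivalence of \Cref{rem-well-known-equivalences} and identifying $\lRel{R}$ with $\lRel{R}(\Kron)$. By \Cref{lem-limits-and-coproducts-relations} the subcategory $\lRel{R}(\Kron)$ is closed under extensions, limits, coproducts and subobjects, and by \Cref{lem-directed-colimits-closure} it is closed under directed colimits; since every filtered colimit may be computed as a directed one (via a cofinal functor from a directed poset), this gives closure under filtered colimits. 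Closure under products and directed colimits, together with closure under submodules — a fortiori under pure submodules — is exactly Crawley-Boevey's characterisation \cite[\S2.3]{crawley-boevey-inf-dim-reps} of a definable subcategory, so $\lRel{R}(\Kron)$ is definable in $\lMod{R\Kron}$. That it contains every flat $R\Kron$-module is immediate from \Cref{lem-characterising-flat-and-injective-kronecker-mods}(1): flatness of $(M_{t},M_{h};\mu_{a},\mu_{b})$ forces $\begin{psmallmatrix}\mu_{a}\\\mu_{b}\end{psmallmatrix}$ to be injective, which is precisely the defining condition of \Cref{defn-cat-of-relations-as-a-cat-of-kron-mods}; in particular the regular representation belongs to $\lRel{R}(\Kron)$. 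Finally \Cref{lem-explicit-envelope} exhibits an $\lRel{R}(\Kron)$-envelope of every object, so $\lRel{R}(\Kron)$ is enveloping.

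For the consequences, since $\lRep{R}(\Kron)$ is abelian, closure under extensions, subobjects and products makes $\lRel{R}(\Kron)$ a torsion-free class by \ref{para-tattar-2}, and its torsion class ${}^{\perp}\lRel{R}(\Kron)$ is computed in \Cref{lem-computing-torsion-class} to consist of the objects with $M_{h}=0$; thus $({}^{\perp}\lRel{R}(\Kron),\lRel{R}(\Kron))$ is a torsion pair. This torsion class is closed under subobjects, because a subrepresentation of an object with zero head component again has zero head component, so the torsion pair is hereditary in the sense of \ref{para-tattar-5}. Every projective $R\Kron$-module is flat, hence lies in $\lRel{R}(\Kron)$ by the previous paragraph, so the torsion pair is faithful. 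Being a torsion pair in a quasi-abelian — indeed abelian — category, \ref{para-tattar-4} shows that $\lRel{R}(\Kron)$ is itself quasi-abelian, and hence exact for the class of strictly (co)exact sequences by \ref{para-strictly-exact}.

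It remains to upgrade ``precovering'' to the stated covering property. Here I would apply Angeleri H\"ugel's theorem \ref{para-definable}: a torsion-free class in a module category is definable if and only if every module admits a cover by it. Having shown $\lRel{R}(\Kron)$ is both definable and a torsion-free class in $\lMod{R\Kron}$, we conclude that every $R\Kron$-module has an $\lRel{R}(\Kron)$-cover; in particular $\lRel{R}(\Kron)$ is covering, and a fortiori precovering.

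The genuine mathematical content lies in the earlier lemmas, above all the directed-colimit closure of \Cref{lem-directed-colimits-closure} and the flat-module characterisation of \Cref{lem-characterising-flat-and-injective-kronecker-mods}; the only points requiring real care in the present argument are the reduction of ``filtered'' to ``directed'' colimits when quoting \Cref{lem-directed-colimits-closure}, and ensuring that the definability machinery of \cite{crawley-boevey-inf-dim-reps} and \cite{Hugel-abundance-of-silting} is invoked for $\lMod{R\Kron}$ rather than merely for the equivalent category $\lRep{R}(\Kron)$ — harmless given \Cref{rem-well-known-equivalences}.
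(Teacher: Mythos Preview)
Your proposal is correct and follows essentially the same route as the paper: both assemble the theorem from \Cref{lem-limits-and-coproducts-relations}, \Cref{lem-explicit-envelope}, \Cref{lem-computing-torsion-class}, \Cref{lem-directed-colimits-closure} and \Cref{lem-characterising-flat-and-injective-kronecker-mods}, invoking \ref{para-tattar-2}, \ref{para-tattar-4}, \ref{para-tattar-5} and \ref{para-definable} for the torsion-theoretic and definability consequences. The only cosmetic additions in your write-up are the explicit remark on reducing filtered to directed colimits and the transport along \Cref{rem-well-known-equivalences}, neither of which the paper spells out but both of which are implicit there.
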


\begin{proof}
By \Cref{lem-limits-and-coproducts-relations},  $\lRel{R}(\Kron)$ is closed under subobjects, extensions, limits and coproducts. 
    By \Cref{lem-explicit-envelope}, $\lRel{R}(\Kron)$  is enveloping. 
    By the discussion in \ref{para-tattar-2}  this means $\lRel{R}(\Kron)$ is a torsion-free class. 
    By \Cref{lem-directed-colimits-closure} $\lRel{R}(\Kron)$ is closed under directed colimits. 
By the discussion in \ref{para-definable}, this means $\lRel{R}(\Kron)$ is  definable, and hence also that $\lRel{R}(\Kron)$ is covering.   
    By \Cref{lem-characterising-flat-and-injective-kronecker-mods} this torsion-free class contains every flat $R\Kron$-module, and so in particular it is faithful in the sense of \ref{para-tattar-5}. 
    By \Cref{lem-computing-torsion-class} the corresponding torsion class consists of the representations supported only at $t$, and since this is closed under submodules, the corresponding torsion theory is hereditary.  
    By the discussion in \ref{para-tattar-4} this means $\lRel{R}(\Kron)$ is quasi-abelian and, thus, as in \ref{para-strictly-exact}, it has an exact structure given by the class of all kernel-cokernel pairs. 
\end{proof}

\section{Reductions of relations over local rings}
\label{subsec-local-situation}

As before $R$ is a commutative ring. 
Our main motivation for studying $\lRel{R}(\Kron)$, as we did in \Cref{thm-main-general}, was that it is equivalent to  a \emph{category of relations}, defined as follows; see \Cref{lem-essential-image}. 
%


\begin{defn}
        \label{defn-category-of-relations}
        For left $R$-modules $L$ and $M$, an \emph{$R$-linear relation from}  $L$ \emph{to} $M$ is an $R$-submodule $C$ of the direct sum $L\oplus M$. 
An $R$-linear relation from $M$ to $M$ is referred to as an $R$-linear relation \emph{on} $M$.

        The category $\lRel{R}$ of $R$-linear relations is defined as follows. 
         The objects of $\lRel{R}$ are  pairs $(M,C)$ where $C$ is a relation on a left $R$-module  $M$. 
         Given a pair $(L,B),(M,C)$ of such objects we let 
         \[
         \Hom{\lRel{R}}((L,B),(M,C))\coloneqq \{\langle f \rangle \mid f\in \Hom{R}(L,M) \colon (f(x),f(y))\in C\text{ for all }(x,y)\in B\}.
         \]
         We emphasise here that the brackets around $\langle f\rangle $ exist only to distinguish it from $f$.  
         The composition of morphisms $\langle f\rangle\colon (L,B)\to (M,C)$ and $\langle g \rangle\colon (M,C)\to (N,D)$ is defined by $\langle g \rangle\circ \langle f\rangle \coloneqq \langle gf\rangle$, which makes sense since for each $(x,y)\in B$ we have $(f(x),f(y))\in C$ and hence $(g(f(x)),g(f(y))\in D$. 
         The identity morphisms in $\lRel{R}$ with respect to this composition are defined by $\idfunc{(M,C)}\coloneqq \langle \idfunc{M}\rangle$. 
       \end{defn}

    

\begin{lem}
\label{lem-existence-of-functor}
\label{lem-essential-image}
    There exists an  $R$-linear equivalence between $\lRel{R}$ and $\lRel{R}(\Kron)$. 
\end{lem}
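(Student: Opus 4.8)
The plan is to write down the evident comparison functor $F\colon\lRel{R}\to\lRel{R}(\Kron)$ and verify directly that it is an $R$-linear equivalence, by checking that it is well defined, fully faithful and essentially surjective.

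First I would define $F$ on objects by sending a pair $(M,C)$ to the triple $(C,M;p_1,p_2)$, where $p_1,p_2\colon C\to M$ are the restrictions to the submodule $C\subseteq M\oplus M$ of the two canonical projections $M\oplus M\to M$. Since $\ker(p_1)\cap\ker(p_2)=\{(x,y)\in C\colon x=y=0\}=0$, this lands in $\lRel{R}(\Kron)$ by \Cref{defn-cat-of-relations-as-a-cat-of-kron-mods}. On morphisms I would send $\langle f\rangle\colon(L,B)\to(M,C)$ to the pair $(f_t,f_h)$ with $f_h=f$ and $f_t\colon B\to C$, $(x,y)\mapsto(f(x),f(y))$: the codomain of $f_t$ is correct precisely because $\langle f\rangle$ is a morphism in $\lRel{R}$, and the intertwining identities $p_c f_t=f_h p_c$ for $c=1,2$ hold by inspection, so $(f_t,f_h)$ is a morphism in $\lRel{R}(\Kron)$. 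Preservation of composition and of identities, and $R$-linearity, are then one-line verifications since $\langle f\rangle\mapsto(f_t,f_h)$ is additive and commutes with scalars.

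For full faithfulness I would fix objects $(L,B),(M,C)$ of $\lRel{R}$, so that $F(L,B)=(B,L;p_1,p_2)$ and $F(M,C)=(C,M;p_1,p_2)$, and observe that any morphism $(g_t,g_h)\colon F(L,B)\to F(M,C)$ must satisfy $p_c g_t=g_h p_c$, which forces $g_t(x,y)=(g_h(x),g_h(y))$ for all $(x,y)\in B$. Hence $(g_t,g_h)$ is completely determined by $g_h\in\Hom{R}(L,M)$, and the only remaining constraint is that $g_t$ take values in $C$, i.e.\ that $(g_h(x),g_h(y))\in C$ whenever $(x,y)\in B$ --- which is exactly the condition defining a morphism $\langle g_h\rangle\colon(L,B)\to(M,C)$. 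So $\langle f\rangle\mapsto F(\langle f\rangle)$ is a bijection on Hom-sets.

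For essential surjectivity I would take an object $(M_t,M_h;\mu_a,\mu_b)$ of $\lRel{R}(\Kron)$, use \Cref{defn-cat-of-relations-as-a-cat-of-kron-mods} to see that $\binom{\mu_a}{\mu_b}\colon M_t\to M_h\oplus M_h$ is injective, and let $C\subseteq M_h\oplus M_h$ be its image, which is an $R$-linear relation on $M_h$. Then $m\mapsto(\mu_a(m),\mu_b(m))$ is an isomorphism $\phi_t\colon M_t\to C$, and $(\phi_t,\idfunc{M_h})$ is an isomorphism $(M_t,M_h;\mu_a,\mu_b)\to(C,M_h;p_1,p_2)=F(M_h,C)$ in $\lRep{R}(\Kron)$, hence in the full subcategory $\lRel{R}(\Kron)$. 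I do not anticipate a genuine obstacle here: the whole argument is formal. The only place needing care is bookkeeping --- keeping track that the tail vertex $t$ of the representation carries the relation $C$ itself while the head vertex $h$ carries the ambient module $M$ --- and invoking the defining injectivity of \Cref{defn-cat-of-relations-as-a-cat-of-kron-mods} at precisely the step proving essential surjectivity.
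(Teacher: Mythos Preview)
Your proof is correct and takes essentially the same approach as the paper's: both define the functor via $(M,C)\mapsto(C,M;p_1,p_2)$ with the two projections, verify full faithfulness by observing that any morphism $(g_t,g_h)$ between such representations is forced to satisfy $g_t(x,y)=(g_h(x),g_h(y))$, and use the injectivity of $\binom{\mu_a}{\mu_b}$ to handle essential surjectivity. Your essential-surjectivity argument is, if anything, more direct than the paper's, which phrases that step in terms of identifying the essential image of the functor.
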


\begin{proof}
We begin by establishing a fully-faithful functor  $\nab{R}\colon \lRel{R}\rightarrow \lRep{R}(\Kron)$. 
Define $\nab{R}$ on objects $(M,C)$ by $(C,M;\alpha_{(M,C)},\beta_{(M,C)})$ where $\alpha_{(M,C)}(x,y)=x$ and $\beta_{(M,C)}(x,y)=y$ for each $(x,y)\in C$. 
Let $\langle f \rangle\in \Hom{\lRel{R}}((L,B),(M,C))$ for some  $R$-linear map $f\colon L\to M$ with $(f(x),f(y))\in C$ for all $(x,y)\in B$. 

We now define the morphism
\[
\nab{R}( \langle f\rangle )\coloneqq (f_{\Delta},f) \in \Hom{\lRep{R}(\Kron)}((B,L;\alpha_{(L,B)},\beta_{(L,B)}),(C,M;\alpha_{(M,C)},\beta_{(M,C)})).
\] 
Declare $f_{\Delta}\colon B\to C$ to be the $R$-linear map  given by $(x,y)\mapsto (f(x),f(y))$. 
To see that $\nab{R}( \langle f\rangle )$ is a morphism in $\lRep{R}(\Kron)$, consider that for each $(x,y)\in C$ we have 
\[
\begin{array}{cc}
     f(\alpha_{(L,B)}(x,y))=f(x)=\alpha_{(M,C)}(f_{\Delta}(x,y)),
     & 
     f(\beta_{(L,B)}(x,y))=f(y)=\beta_{(M,C)}(f_{\Delta}(x,y)).
\end{array}
\]
It is clear $\nab{R}$ respects composition and identities.  
For any $r\in R$ we have $rf_{\Delta}=(rf)_{\Delta}$ since $r(f(x),f(y))=((rf)(x),(rf)(y))$ for all $(x,y)\in B$. 
Thus we have $r\nab{R}(\langle f\rangle)=\nab{R}(\langle rf\rangle)$ giving that $\nab{R}$ is $R$-linear. 

To see that $\nab{R}$ is full, for any morphism $(f_{t},f_{h})\colon (B,L;\alpha_{(L,B)},\beta_{(L,B)})\to (C,M;\alpha_{(M,C)},\beta_{(M,C)})$,
\[
f_{t}(x,y)=(\alpha_{(M,C)}(f_{t}(x,y)),\beta_{(M,C)}(f_{t}(x,y)))
=(f_{h}(\alpha_{(L,B)}(x,y)),f_{h}(\beta_{(L,B)}(x,y)))=(f_{h}(x),f_{h}(y)),
\]
meaning that $(f_{h})_{\Delta}=f_{t}$, and so $(f_{t},f_{h})=\nab{R}(\langle f_{h}\rangle)$. 
Since $\nab{R}$ is $R$-linear, it is straightforward to see that $\nab{R}$ is also faithful, since the zero morphism on $(M_{t},M_{h};\mu_{a},\mu_{b})$ is given by $(0_{M_{t}},0_{M_{h}})$. 

From here it suffices to show that 
     $\lRel{R}(\Kron)$ is the essential image   of  $\nab{R} $. 
    Suppose we are given an arbitrary isomorphism $(\theta_{t},\theta_{h})\colon (L_{t},L_{h};\lambda_{a},\lambda_{b})\to(C,M;\alpha_{(M,C)},\beta_{(M,C)})$. 
    By \Cref{lem-existence-of-functor}, and since $\theta$ is a morphism in the category $\lRep{R}(\Kron)$, for each $\ell\in L_{t}$ we have
    \[
    \begin{array}{ccc}
    \theta_{t}(\ell)=(\alpha_{(M,C)}(\theta_{t}(\ell)),\beta_{(M,C)}(\theta_{t}(\ell))),
    &
    \alpha_{(M,C)}(\theta_{t}(\ell))=\theta_{h}(\lambda_{a}(\ell)),
    &
    \beta_{(M,C)}(\theta_{t}(\ell))=\theta_{h}(\lambda_{b}(\ell)). 
    \end{array}
    \]
    Now suppose  $\lambda_{a}(\ell)=0$ and $\lambda_{b}(\ell)=0$. 
    Using the equations above this means that $\theta_{t}(\ell)=0$, meaning that $\ell=0$ since $\theta_{t}$ is injective. 
    On the other hand, sending $(x,y)$ to $(x,y)=(\alpha_{(M,C)}(x,y),\beta_{(M,C)}(x,y))$ clearly defines an injective map of the form $C\to M\oplus M$.  
\end{proof}

For the convenience of the reader we recall the notation from \eqref{eqn-splitting-thm-submodules} in the introduction. 

\begin{notn}
\label{notn-generalised-image}
\label{notn-inverting-linear-relations}
\label{notn-composing-linear-relations}
\label{notation-for-relations}
    Let $C$ be an $R$-linear relation from $L$ to $M$. 
    For any $\ell\in L$  and any subset $S\subseteq L$ let 
    \[
        \begin{array}{cc}
    C\ell\coloneqq\{m\in M\colon (\ell,m)\in C\},
    &
    CS\coloneqq \bigcup_{\ell\in S}C\ell=\{m\in M\colon (\ell,m)\in C\text{ for some }\ell\in S\}. 
        \end{array}
    \]
Let $D$ be an $R$-linear relation from $M$ to 
to $N$. 
The \emph{composition} $DC$ is the relation from $L$ to
$N$ given by
\[
DC\coloneqq \{(\ell,n)\in L\oplus N\colon\text{there exists } m\in M \text{ with } (m,n)\in D,\,(\ell,m)\in C\}
\]
The \emph{inverse} of $C$  is the $R$-linear relation $C^{-1}\coloneqq \{(m,\ell)\in M\oplus L\colon(\ell,m)\in C\}$. 
When $L=M$ let 
   \[
   \begin{array}{c}
C'' = \{ m\in M \colon  \text{$\exists \,(m_{n})\in M^{\mathbb{N}}$ with $m_{n+1}\in Cm_{n}$ for all $n$ and $m=m_{0}$}\},
\\
C' = \{ m\in M \colon  \text{$\exists \,(m_{n})\in M^{\mathbb{N}}$ with $m_{n+1}\in Cm_{n}$ for all $n$, $m=m_{0}$ and  $m_{n} = 0$ for $ n\gg0$}\},
    \\
       \begin{array}{cc}
 C^{\sharp} = C'' \cap (C^{-1})'',
 &
 C^{\flat} = C'' \cap (C^{-1})'+(C^{-1})'' \cap C'.
        \end{array}
        \end{array}
   \]    
\end{notn}

\begin{example}
\label{example-thesis-relation} 
\label{example-graph-lienar-relation}
(i) If $f\colon L\to M$ is an $R$-linear map then its \emph{graph} is the $R$-linear relation $\graph{f}\coloneqq\{(\ell,f(\ell))\mid \ell\in L\}$ from $L$ to $M$. 
In this case  $CS$ is the image of $S$ under $f$. 
    Conversley, if $C$ is an $R$-linear relation from $L$ to $M$ such that $C0=0$, then the assignment $\ell\mapsto m$ if and only if $m\in C\ell$ defines an $R$-linear map $f\colon L\to M$ with $C=\graph{f}$.   
    
    If $f\colon L\to M$ and $g\colon M\to N$ are $R$-linear maps, and if $C$ is the graph of $f$ and if $B$ is the graph of $g$, then $BC$ is the graph of $gf$. 
    Such examples, and their compositions, are prototypical. 

(ii) For a different example, let $I$ be an ideal in $R$ and let $M$ be the quotient $F/K$ of the free module $F=Rz_{0}\oplus Rz_{1}$ by the submodule $K=Iz_{1}$, and for each $i=0,1$ let $\overline{z}_{i}=z_{i} + K$. 
Define the $R$-linear map $f\colon M\to M$ by $f(r\overline{z}_{0} + s\overline{z}_{1})=r\overline{z}_{1}$, and let $C$ be the graph of $f$. 
Since $f^{2}=0$ it follows that $CC=M\oplus 0$. 
Let $D=C^{-1}C$ considered as an $R$-linear relation on $M$. 
By definition, if $r,r',s,s'\in R$ and
\[
\begin{array}{cc}
m=r\overline{z}_{0} + s\overline{z}_{1},
&
m'=r'\overline{z}_{0} + s'\overline{z}_{1}
\end{array}
\]
then $(m,m')\in D$ if and only if  $f(m)=f(m')$ if and only if $(r-r')\overline{z}_{1}=0$ if and only if $r-r'\in I$. 
On one hand, if $I=R$ then $D=M\oplus M\cong R\oplus R$. 
On the other hand, if $I=0$ then $K=0$ and $D\cong R\oplus R\oplus R$. 

(iii)
Let $R$ be a field and $C$ be an $R$-linear relation. 
The subspace $C'$ is equal to the \emph{stable kernel} $\bigcup _{n>0}C^{n}0$, and $C''$ is a subspace of the \emph{stable image} $\bigcap _{n>0}C^{n}V$, as defined by Ringel \cite[\S2]{Rin1975}. 
Furthermore if $\mathrm{dim}_{k}(M)<\infty$ then  $C''$ is equal to the stable image; see \cite[Lemma~4.2]{Cra2018}. 
\end{example}

We unpack \Cref{notation-for-relations} a little. 

\begin{rem}
\label{rem-unpacking-relation-notn}
    Let $m\in C^{\sharp}$ and so in particular $m\in C''$ giving a sequence $(m_{n})\in M^{\mathbb{N}}$ with $m_{n+1}\in Cm_{n}$ for all $n\in\mathbb{N}$ and $m=m_{0}$. 
    Hence for any $r\in R$ the diagonal $R$-linear action on $M\oplus M$ gives $(rm_{n},rm_{n++1})=r(m_{n},m_{n+1})\in C$, and so $C'\subseteq C''$ are both $R$-submodule of $M$ by restriction.  
    
Since $m\in (C^{-1})''$ we also similarly have a sequence $(m'_{n})\in M^{\mathbb{N}}$ with $m'_{n+1}\in C^{-1}m'_{n}$ for all $n\in\mathbb{N}$ and $m=m'_{0}$. 
Now let $m_{-n}\coloneqq m_{n}'$ meaning $m_{n+1}\in Cm_{n}$ for all $n\in\mathbb{Z}$. 
In other words $C^{\sharp}$ is the set of elements $m_{0}$ arising in the middle term of a sequence $(m_{n})\in M^{\mathbb{Z}}$ with $m_{n+1}\in Cm_{n}$ for all $n\in\mathbb{Z}$. 
Similarly $C^{\flat}$ consists of the sums $m^{+}_{0}+m^{-}_{0}$ of the middle terms of such sequences $(m^{\pm}_{n})$ where $m_{n}^{\pm }=0$ for $\pm n\gg 0$. 
\end{rem}

The following result from \cite{Cra2018} was written only in the context where $R$ is a field. The proof does not make use of this assumption, and generalises with no complication. 
For completeness we expose this fact. 

\begin{lem}
\label{lem-auto-of-sharp-over-flat}
\emph{\cite[Lemmas~4.4~and~4.5]{Cra2018}} For any object $(M,C)$ of $\lRel{R}$ we have
\[
\begin{array}{cccc}
C^{\sharp}\subseteq CC^{\sharp},
&
C^{\flat}=C^{\sharp}\cap CC^{\flat},
&
C^{\sharp}\subseteq C^{-1}C^{\sharp},
&
C^{\flat}=C^{\sharp}\cap C^{-1}C^{\flat}.
\end{array}
\]
Consequently there is an $R$-module automorphism $\theta$ on $C^{\sharp}/C{}^{\flat}$
defined by 
\[
\text{$\theta(m+C^{\flat})=m'+C^{\flat}$ if and only if $m'\in C^{\sharp}\cap(C^{\flat}+Cm)$.}
\]
\end{lem}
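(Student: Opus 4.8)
The plan is to prove the four displayed inclusions/equalities first, and then deduce the existence of the automorphism $\theta$ from them by a formal argument. For the first inclusion $C^{\sharp}\subseteq CC^{\sharp}$, I would take $m\in C^{\sharp}$ and use the description from \Cref{rem-unpacking-relation-notn}: there is a bi-infinite sequence $(m_{n})_{n\in\mathbb{Z}}$ with $m_{n+1}\in Cm_{n}$ and $m=m_{0}$. Then $m_{-1}$ also lies in $C^{\sharp}$ (shift the sequence) and $(m_{-1},m)\in C$, so $m\in Cm_{-1}\subseteq CC^{\sharp}$. The inclusion $C^{\sharp}\subseteq C^{-1}C^{\sharp}$ is the same argument applied to $C^{-1}$, noting that $(C^{-1})^{\sharp}=C^{\sharp}$ and $(C^{-1})^{\flat}=C^{\flat}$ directly from the symmetric form of the definitions. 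For the equality $C^{\flat}=C^{\sharp}\cap CC^{\flat}$, the inclusion $\subseteq$ is analogous: if $m=m^{+}_{0}+m^{-}_{0}$ with $(m^{\pm}_{n})$ bi-infinite, $m^{\pm}_{n}=0$ for $\pm n\gg0$, then $m^{+}_{-1}+m^{-}_{-1}\in C^{\flat}$ (the shifted sequences still have the required vanishing, using $m^{+}_{n}=0$ for $n\gg0$ so certainly $m^{+}_{-1}$-sequence is eventually $0$ on the right, and similarly $m^{-}$) and maps to $m$ under $C$; and of course $m\in C^{\flat}\subseteq C^{\sharp}$. The reverse inclusion $C^{\sharp}\cap CC^{\flat}\subseteq C^{\flat}$ is the one requiring care: given $m\in C^{\sharp}$ with $m\in Cn$ for some $n\in C^{\flat}$, I must produce a decomposition of $m$ into eventually-vanishing two-sided sequences. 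Writing $n=n^{+}_{0}+n^{-}_{0}$ with $n^{\pm}$ eventually trivial (on the right for $+$, on the left for $-$), and using $m\in C^{\sharp}$ to get a two-sided sequence through $m$, one builds the decomposition of $m$ by prepending $m$ appropriately; the point is that $m\in Cn^{+}_{0}$ plus an eventually-trivial-on-the-right tail built from the $C^{\sharp}$-sequence for $m$ gives the $+$ part, and the $-$ part is handled by the $n^{-}$ sequence pushed forward. This bookkeeping — matching up which tail vanishes on which side — is the main obstacle, and is exactly the content of \cite[Lemmas~4.4,~4.5]{Cra2018}; since the argument is purely set-theoretic manipulation of sequences in $M$ with no use of a field structure, it transfers verbatim.

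Granting the four relations, the construction of $\theta$ is formal. First I would check $\theta$ is well-defined as a relation on $C^{\sharp}/C^{\flat}$: given $m+C^{\flat}$, the set $C^{\sharp}\cap(C^{\flat}+Cm)$ is non-empty because $C^{\sharp}\subseteq CC^{\sharp}$ forces $m\in Cm'$ for some $m'\in C^{\sharp}$, hence $m'\in C^{\sharp}\cap (C^{\flat}+C^{-1}m)$ — wait, more directly: apply $C^{\sharp}\subseteq C^{-1}C^{\sharp}$ to get $m'\in C^{\sharp}$ with $(m,m')\in C$, i.e. $m'\in Cm\cap C^{\sharp}\subseteq C^{\sharp}\cap(C^{\flat}+Cm)$, so a representative exists. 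Uniqueness modulo $C^{\flat}$: if $m',m''\in C^{\sharp}$ both lie in $C^{\flat}+Cm$, then $m'-m''\in C^{\sharp}$ and $m'-m''\in C^{\flat}+C(m-m)\subseteq C^{\flat}+C0$; but $C0\cap C^{\sharp}\subseteq C^{\flat}$ (the constant-tail argument), and more carefully $m'-m''\in C^{\sharp}\cap(C^{\flat}+C0)=C^{\sharp}\cap CC^{\flat}=C^{\flat}$ using the second displayed equality together with $C^{\flat}+C0\subseteq CC^{\flat}$ since $0\in C^{\flat}$. So $\theta$ is a well-defined $R$-linear map. That it is $R$-linear and additive is immediate from linearity of the relation $C$ and of the submodules involved.

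Finally, to see $\theta$ is an automorphism I would exhibit its inverse as the analogous map $\theta'$ built from $C^{-1}$, i.e. $\theta'(m+C^{\flat})=m'+C^{\flat}$ iff $m'\in C^{\sharp}\cap(C^{\flat}+C^{-1}m)$. The two relations $C^{\sharp}\subseteq C^{-1}C^{\sharp}$ and $C^{\flat}=C^{\sharp}\cap C^{-1}C^{\flat}$ guarantee $\theta'$ is well-defined by the symmetric argument. To check $\theta'\theta=\mathrm{id}$: if $\theta(m+C^{\flat})=m'+C^{\flat}$ then $(m,m')\in C$ modulo the flat parts, so $(m',m)\in C^{-1}$, giving $m\in C^{\sharp}\cap(C^{\flat}+C^{-1}m')$, i.e. $\theta'(m'+C^{\flat})=m+C^{\flat}$; and symmetrically $\theta\theta'=\mathrm{id}$. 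Hence $\theta$ is bijective, completing the proof. The only genuinely non-routine step is the reverse inclusion in $C^{\flat}=C^{\sharp}\cap CC^{\flat}$ (and its $C^{-1}$-twin), where one must carefully assemble a two-sided eventually-trivial sequence; everything else is formal manipulation that the paper explicitly notes carries over from the field case without change.
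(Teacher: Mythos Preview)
Your proposal is correct and follows the paper's approach: establish the four relations by shifting bi-infinite sequences, then deduce the automorphism formally from them. Two minor differences are worth noting. For bijectivity you construct an explicit inverse $\theta'$ from $C^{-1}$, while the paper instead checks surjectivity directly from $C^{\sharp}\subseteq CC^{\sharp}$ and injectivity directly from $C^{\flat}=C^{\sharp}\cap C^{-1}C^{\flat}$; the two routes are equivalent. For the key reverse inclusion $C^{\sharp}\cap CC^{\flat}\subseteq C^{\flat}$, the paper's argument is cleaner than your sketch: given $k\in C^{\sharp}$ with $k\in Cn$ for some $n\in C^{\flat}$, one shifts the two one-sided sequences witnessing $n\in C^{\flat}$ forward one step to obtain $n'\in C^{\flat}$ with $n'\in Cn$, and then observes that $k-n'\in C0\subseteq (C^{-1})'$ while also $k-n'\in C''$ (since $k,n'\in C^{\sharp}\subseteq C''$), so $k-n'\in C''\cap (C^{-1})'\subseteq C^{\flat}$ and hence $k\in C^{\flat}$. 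Your line ``$m\in Cn^{+}_{0}$'' is not literally correct (only $m\in C(n^{+}_{0}+n^{-}_{0})$ holds), so the direct decomposition you gesture at would in any case reduce to this difference trick.
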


\begin{proof}

In what remains in the proof we use the notation from \Cref{rem-unpacking-relation-notn} without further reference. 

Firstly, setting $\ell_{n}\coloneqq m_{n+1}$ for all $n\in\mathbb{Z}$ gives $\ell_{n+1}=m_{n+2}\in Cm_{n+1}=C\ell_{n}$ and so $m_{1}\in C^\sharp$ meaning $m=m_{0}\in Cm_{1}\subseteq CC^{\sharp}$ giving $C^{\sharp}\subseteq CC^{\sharp}$. 
Using that $C^{\sharp}=(C^{-1})^{\sharp}$ gives $C^{\sharp}\subseteq C^{-1}C^{\sharp}$. 

Secondly, suppose we also have $m_{0}=m_{0}^{+}+m_{0}^{-}$ where $m_{n}^{\pm }=0$ for $\pm n\gg 0$. 
In this situation we likewise write $\ell^{\pm}_{n}=m_{n+1}^{\pm}$ for all $n$, meaning in particular that $\ell_{n}^{\pm }=0$ for $\pm (n+1)\gg 0$, and so for $\pm n\gg 0$. 
It follows that $\ell_{0}^{\pm }\in C((C^{\mp 1})'\cap (C^{\pm 1})'')$ and so $\ell_{0}\in CC^{\flat}$ since $C^{\flat}$ is an $R$-submodule, and so an additive subgroup. 

This argument shows that $C^{\flat}\subseteq C^{\sharp}\cap CC^{\flat}$ and for the converse we assume $k\in C^{\sharp}\cap CC^{\flat}$ and suppose $k\in Cm$ for $m\in C^{\flat}$ as above. 
We now have $m^{\pm}_{-1}\in Cm^{\pm}_{0}$, and taking the sum gives $m'\in Cm$ where $m'\coloneqq m^{+}_{-1}+m^{-}_{-1}$, which taking the difference with $k$ gives $k-m'\in C0$ as above we have $m'\in C^{\flat}$ and so in particular $k-m'\in (C^{-1})''\cap C'\subseteq C^{\flat}$ as required for the equality $C^{\flat}=C^{\sharp}\cap CC^{\flat}$.  
As above, using that  $C^{\sharp}=(C^{-1})^{\sharp}$ and that  $C^{\flat}=(C^{-1})^{\flat}$ gives $C^{\flat}=C^{\sharp}\cap C^{-1}C^{\flat}$. 

We now prove that the given formula defines an $R$-module automorphism. 
We claim that the formula ensures $\theta$ is well-defined. 
So let $m+C^{\flat}=\ell+C^{\flat}$ for some $m,\ell\in C^{\sharp}$ and suppose $m'\in C^{\sharp}\cap(C^{\flat}+Cm)$ and $\ell'\in C^{\sharp}\cap(C^{\flat}+C\ell)$. 
Hence there exist  $k,j\in C^{\flat}$ such that $(m,m'-k),(\ell,\ell'-j)\in C$. 
This gives $C^{\sharp}\ni\ell'-m'+j-k\in C(\ell-m)\in CC^{\flat}$ and so $\ell'-m'\in C^{\flat}$ by the equality $C^{\flat}=C^{\sharp}\cap CC^{\flat}$ giving the claim. 

To see that $\theta$ is surjective, if $m'\in C^{\sharp}\subseteq CC^{\sharp}$ we have $m'\in Cm$ for $m \in C^{\sharp}$ and so $\theta(m+C^{\flat})=m'+C^{\flat}$. 

To see that $\theta$ is injective, note that if $m'\in C^{\flat}\cap (C^{\flat}+Cm)$ then $m\in C^{-1}C^{\flat}$ giving $m\in C^{\sharp}\cap C^{-1}C^{\flat}= C^{\flat}$. 

 Finally let $r\in R$. 
 Then if $m'\in C^{\sharp}\cap(C^{\flat}+Cm)$, say $m'=k+\ell$ for $k\in C^{\flat}$ and $\ell\in Cm$, then $rk\in C^{\flat}$ and also $(rm,r\ell)\in C$ giving $rm'\in  C^{\sharp}\cap(C^{\flat}+C(rm))$, meaning that $\theta$ is $R$-linear. 
\end{proof}


\begin{lem}
     \label{lem-sharp-over-flat-functorial} The following assignments define $R$-linear functors
    \[
    \begin{array}{cc}
\lRel{R}\overset{\sharp\,/\,\flat}{\xrightarrow{\hspace{0.5cm}}} \lMod{R[T,T^{-1}]},
    \,
(M,C)\xmapsto{\hspace{0.5cm}} C^{\sharp}/C^{\flat},
&
 \lMod{R[T,T^{-1}]}\overset{\mathrm{graph}_{T}}{\xrightarrow{\hspace{0.5cm}}} \lRel{R},
    \,
    X\xmapsto{\hspace{0.5cm}}(X,\graph{T_{X}}),
    \end{array}
    \]
    where $T_{X}\in \End{R}(X)$ is defined by $x\mapsto Tx$. 
Furthermore, $(\sharp\,/\,\flat)\circ\mathrm{graph}_{T}$ is the identity on $\lMod{R[T,T^{-1}]}$.
\end{lem}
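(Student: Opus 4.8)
The plan is to verify the three assertions in turn --- that $\sharp/\flat$ is a well-defined $R$-linear functor, that $\mathrm{graph}_{T}$ is a well-defined $R$-linear functor, and that $(\sharp/\flat)\circ\mathrm{graph}_{T}$ is the identity --- leaning on \Cref{lem-auto-of-sharp-over-flat} for the automorphism $\theta$ of $C^{\sharp}/C^{\flat}$ and on \Cref{rem-unpacking-relation-notn} for the two-sided sequence description of $C^{\sharp}$ and $C^{\flat}$. By \Cref{lem-auto-of-sharp-over-flat} the $R$-module $C^{\sharp}/C^{\flat}$ carries the automorphism $\theta$, so declaring $T$ to act as $\theta$ and $T^{-1}$ as $\theta^{-1}$ endows it with an $R[T,T^{-1}]$-module structure; this is how $\sharp/\flat$ is defined on objects.

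For a morphism $\langle f\rangle\colon(L,B)\to(M,C)$ I would first push forward the defining sequences: since $(x,y)\in B$ implies $(f(x),f(y))\in C$, any two-sided sequence $(m_{n})_{n\in\mathbb{Z}}$ in $L$ with $m_{n+1}\in Bm_{n}$ and $m_{0}=m$ yields a sequence $(f(m_{n}))_{n\in\mathbb{Z}}$ in $M$ with $f(m_{n+1})\in Cf(m_{n})$, so $f(B^{\sharp})\subseteq C^{\sharp}$; the same argument, tracking which terms vanish for $\pm n\gg 0$, gives $f(B^{\flat})\subseteq C^{\flat}$. Hence $f$ descends to an $R$-linear map $\overline{f}\colon B^{\sharp}/B^{\flat}\to C^{\sharp}/C^{\flat}$. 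To see $\overline{f}$ is $R[T,T^{-1}]$-linear it remains to check $\overline{f}\,\theta_{B}=\theta_{C}\,\overline{f}$: if $m'\in B^{\sharp}\cap(B^{\flat}+Bm)$, write $m'=k+\ell$ with $k\in B^{\flat}$ and $\ell\in Bm$; then $f(m')=f(k)+f(\ell)$ with $f(k)\in C^{\flat}$ and $(f(m),f(\ell))\in C$, so $f(m')\in C^{\sharp}\cap(C^{\flat}+Cf(m))$ and therefore $\theta_{C}(f(m)+C^{\flat})=f(m')+C^{\flat}=\overline{f}(\theta_{B}(m+B^{\flat}))$. Preservation of composition and identities is immediate, as is $R$-linearity since $\overline{rf}=r\overline{f}$.

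For $\mathrm{graph}_{T}$: as $R$ is commutative, $T_{X}$ is $R$-linear, so $\graph{T_{X}}$ is an $R$-submodule of $X\oplus X$ and $(X,\graph{T_{X}})\in\lRel{R}$; and if $g\colon X\to Y$ is $R[T,T^{-1}]$-linear then $g(Tx)=Tg(x)$, i.e.\ $(g(x),g(Tx))\in\graph{T_{Y}}$ whenever $(x,Tx)\in\graph{T_{X}}$, so $\langle g\rangle$ is a morphism in $\lRel{R}$; preservation of composition, identities and $R$-linearity follow at once from the definition of composition in $\lRel{R}$. For the composite, fix $X$ and put $C=\graph{T_{X}}$. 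Since $T$ is invertible in $R[T,T^{-1}]$, $C$ is the graph of the $R$-automorphism $T_{X}$, so $Cm=\{Tm\}$ and (directly from the definition of $C^{-1}$) $C^{-1}m=\{T^{-1}m\}$ for all $m\in X$. Thus a sequence $(m_{n})$ with $m_{n+1}\in Cm_{n}$ is exactly $m_{n}=T^{n}m_{0}$, and every $m_{0}$ occurs, so $C''=X$; symmetrically $(C^{-1})''=X$; and if moreover $m_{n}=0$ for $n\gg 0$ then applying $T^{-1}$ repeatedly forces $m_{0}=0$, so $C'=(C^{-1})'=0$. Hence $C^{\sharp}=X$ and $C^{\flat}=0$, so $C^{\sharp}/C^{\flat}=X$ as $R$-modules, and the formula of \Cref{lem-auto-of-sharp-over-flat} gives $\theta(m)=m'$ iff $m'\in Cm=\{Tm\}$, i.e.\ $\theta=T_{X}$; so $(\sharp/\flat)(\mathrm{graph}_{T}(X))$ is $X$ with its original $R[T,T^{-1}]$-structure. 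On morphisms $(\sharp/\flat)(\mathrm{graph}_{T}(g))=(\sharp/\flat)(\langle g\rangle)$ is the map induced by $g$ on $X=C^{\sharp}/C^{\flat}$, namely $g$ itself, so $(\sharp/\flat)\circ\mathrm{graph}_{T}$ is the identity.

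I do not expect a genuine obstacle here: granted \Cref{lem-auto-of-sharp-over-flat}, the argument is bookkeeping. The only two spots demanding a little care are the verification that $\overline{f}$ intertwines $\theta_{B}$ and $\theta_{C}$ (which forces one to unwind the defining property of $\theta$ rather than treat it as a black box) and the vanishing $C'=0$ for $C=\graph{T_{X}}$; both come down to invertibility of $T$.
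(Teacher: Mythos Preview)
Your proposal is correct and follows essentially the same route as the paper's own proof: both use \Cref{lem-auto-of-sharp-over-flat} to equip $C^{\sharp}/C^{\flat}$ with its $R[T,T^{-1}]$-structure, push forward two-sided sequences to obtain $f(B^{\sharp})\subseteq C^{\sharp}$ and $f(B^{\flat})\subseteq C^{\flat}$, unwind the defining property of $\theta$ to check the intertwining relation, and then for $C=\graph{T_{X}}$ use the sequence $x_{n}=T^{n}x_{0}$ together with invertibility of $T$ to get $C^{\sharp}=X$ and $C^{\flat}=0$. Your version is in fact slightly more complete, since you also spell out why $\mathrm{graph}_{T}$ is a functor and why the composite is the identity on morphisms, points the paper leaves implicit.
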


\begin{proof}
    By \Cref{lem-auto-of-sharp-over-flat} the $R$-module $C^{\sharp}/C^{\flat}$ has the structure of an $R[T,T^{-1}]$-module. 
    For a morphism $\langle f\rangle\colon (L,B)\to (M,C)$ in we claim that $f(\ell)$ lies in $C^{\sharp}$ (respectively, $C^{\flat}$) for any $\ell$ lying in $B^{\sharp}$ (respectively, $B^{\flat}$). 
    Indeed, following the notation from \Cref{rem-unpacking-relation-notn}, provided a sequence $\ell_{n}\in L^{\mathbb{Z}}$ with $\ell_{n}\in B \ell_{n+1}$ for all $n$ one has that  $f(\ell_{n})\in C f(\ell_{n+1})$ and so $\ell\in B^{\sharp}$ implies $f(\ell)\in C^{\sharp}$ and, since $f$ is $R$-linear, that $\ell\in B^{\flat}$ implies $f(\ell)\in C^{\flat}$ and thus the claim holds. 
    
    Let $\tau$ and $\theta$ be the automorphisms of $B^{\sharp}/B^{\flat}$ and $C^{\sharp}/C^{\flat}$ induced by $B$ and $C$ respectively.  
    Now suppose $\tau(\ell+B^{\flat})=\ell'+B^{\flat}$ which by \Cref{lem-auto-of-sharp-over-flat} means $\ell'\in B^{\sharp}\cap(B^{\flat}+B\ell)$, and so there is some $\ell''\in B^{\flat}$ such that $\ell''-\ell'=\ell$. 
    By what we claimed above, we have $f(\ell'')\in C^{\flat}$ and $f(\ell')\in C^{\sharp}$ and so $f(\ell')\in C^{\sharp}\cap(C^{\flat}+Cf(\ell))$. 
    
    Hence by \Cref{lem-auto-of-sharp-over-flat} we have that $\theta(f(\ell)+C^{\flat})=f(\tau(\ell))+C^{\flat}$ and from here functoriality is straightforward. 
    Now let $C$ be the graph of the $R$-linear automorphism $T$ on $X$ then for any element $x_{0}\in X$ one can define a sequence $(x_{n})\in X^{\mathbb{Z}}$ by $x_{n}=T^{n}x_{0}$, by construction giving $(x_{n},x_{n+1})\in C$. 
    Hence by \Cref{rem-unpacking-relation-notn} it follows that $C^{\sharp}=X$. 
    Furthermore, if $x_{n}=0$ for some $n$ then $x_{0}=T^{-n}x_{n}=0$ and so $C^{\flat}=0$. 
\end{proof}

\begin{defn}
\label{def-reductions}
\cite[Definition 1.4.32]{Ben2018} 
A \emph{reduction} of an object $(M,C)$ of $\lRel{R}$  is a 
pair $(X\mid\rho)$ where $X$ is an $R[T,T^{-1}]$-module and  $\rho\colon X\to M$ is an $R$-linear map such that 
\[
\begin{array}{cc}
C^{\sharp}=C^{\flat}+\im(\rho),
     & 
     \rho(Tx)\in C\rho(x) \quad(x\in X).
\end{array}
\]
We say a reduction $(X\mid \rho)$ of $C$ is \emph{free} if $X$ is free as an $R$-module. 
We say $(X\mid \rho)$ \emph{meets in the radical} if the preimage $\rho^{-1}(C^{\flat})=\{x\in X\colon \rho(x)\in C^{\flat}\}$ of $C^{\flat}$ coincides with radical $\rad(X)$ of $X$ considered as an $R$-module, meaning the intersection of all maximal $R$-submodules. 
\end{defn}

\Cref{thm-existence-of-reductions} provides sufficient conditions for the existence of a reduction. 
Before stating and proving this result we note a consequence of \Cref{lem-sharp-over-flat-functorial} for \Cref{def-reductions}  in \Cref{lem-reductions-define-surjective-automorphism-respecting-maps}. 

\begin{lem}
    \label{lem-reductions-define-surjective-automorphism-respecting-maps} 
    If $(X\mid \rho)$ is a reduction of $(M,C)$ then  $\langle \rho\rangle \colon \mathrm{graph}_{T}(X)\to (M,C)$ is a morphism in $\lRel{R}$. 
Furthermore if $(X\mid \rho)$ meets in the radical then there is a short exact sequence in $\lMod{R[T,T^{-1}]}$  given by
    \[
    \begin{tikzcd}
        0\arrow[r]
        &\rad(X)\arrow[r, "\subseteq"]
        & X\arrow[rr, "{(\sharp/\flat)\langle\rho\rangle}"]
        &
        & C^{\sharp}/C^{\flat}\arrow[r]
        & 0.
    \end{tikzcd}
\]
\end{lem}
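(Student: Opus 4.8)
The plan is to verify the two assertions in turn, relying on \Cref{lem-sharp-over-flat-functorial} to provide the functor $\sharp/\flat$ and on \Cref{lem-auto-of-sharp-over-flat} for the module structure on $C^{\sharp}/C^{\flat}$. For the first claim, I would unpack \Cref{def-reductions}: a reduction $(X\mid\rho)$ satisfies $\rho(Tx)\in C\rho(x)$ for all $x\in X$, which is precisely the condition that $\langle\rho\rangle$ be a morphism $(X,\graph{T_{X}})\to(M,C)$ in $\lRel{R}$ — indeed $(x,Tx)\in\graph{T_{X}}$ forces the pair $(\rho(x),\rho(Tx))$ to lie in $C$, and since $\graph{T_{X}}$ is generated (as a relation) by such pairs together with the $R$-linearity of $\rho$, this is exactly what \Cref{defn-category-of-relations} requires. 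This is essentially a restatement and should be short.

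For the second claim I would first compute $(\sharp/\flat)(\graph{T_{X}})$. By the final sentence of \Cref{lem-sharp-over-flat-functorial}, applying $\sharp/\flat$ to $\mathrm{graph}_{T}(X)$ returns $X$ itself, so the morphism $(\sharp/\flat)\langle\rho\rangle$ is an $R[T,T^{-1}]$-linear map $X\to C^{\sharp}/C^{\flat}$; explicitly, tracing through the proof of \Cref{lem-sharp-over-flat-functorial}, it sends $x\mapsto\rho(x)+C^{\flat}$, which is well-defined since $x\in X=(\graph{T_{X}})^{\sharp}$ implies $\rho(x)\in C^{\sharp}$ by the claim proved there (applied to the morphism $\langle\rho\rangle$). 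Surjectivity is then immediate from the reduction axiom $C^{\sharp}=C^{\flat}+\im(\rho)$: any class $m+C^{\flat}$ with $m\in C^{\sharp}$ can be written $m=c+\rho(x)$ with $c\in C^{\flat}$, so $m+C^{\flat}=\rho(x)+C^{\flat}$ is hit. For the kernel, the preimage of $C^{\flat}$ under $x\mapsto\rho(x)+C^{\flat}$ is by definition $\rho^{-1}(C^{\flat})$, which equals $\rad(X)$ precisely because $(X\mid\rho)$ meets in the radical (\Cref{def-reductions}). Finally I would note that $\rad(X)$ is an $R[T,T^{-1}]$-submodule: $T$ is invertible, so the automorphism $x\mapsto Tx$ permutes the maximal $R$-submodules of $X$ and hence fixes their intersection, and the inclusion $\rad(X)\hookrightarrow X$ is $R[T,T^{-1}]$-linear. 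Assembling these facts gives the displayed short exact sequence in $\lMod{R[T,T^{-1}]}$.

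The main obstacle, such as it is, lies in being careful about the identification $(\sharp/\flat)(\mathrm{graph}_{T}(X))=X$ and the concrete description of $(\sharp/\flat)\langle\rho\rangle$ on elements: one must check that the automorphism-respecting bookkeeping of \Cref{lem-sharp-over-flat-functorial} really does make $x\mapsto\rho(x)+C^{\flat}$ the induced map, rather than some twist of it. Once that identification is pinned down, surjectivity and the kernel computation are formal consequences of the two reduction axioms and the meeting-in-the-radical hypothesis, and the $R[T,T^{-1}]$-linearity of everything in sight follows from $T$ being a unit.
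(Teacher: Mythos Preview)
Your proposal is correct and follows essentially the same route as the paper: verify $\langle\rho\rangle$ is a morphism from the condition $\rho(Tx)\in C\rho(x)$, identify $(\sharp/\flat)\langle\rho\rangle$ with $x\mapsto\rho(x)+C^{\flat}$ via \Cref{lem-sharp-over-flat-functorial}, and read off surjectivity and the kernel from the two reduction axioms. Your extra check that $\rad(X)$ is $T$-stable is harmless but unnecessary, since once the map is known to be $R[T,T^{-1}]$-linear its kernel is automatically an $R[T,T^{-1}]$-submodule.
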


\begin{proof}
    Since $\rho(Tx)\in C\rho(x)$ for all $x\in X$ it follows that $(x,y)\in \graph{T_{X}}$ implies $(\rho(x),\rho(y))\in C$ and so  $\langle \rho\rangle$ is a morphism.  
    Applying the functor $\sharp/\flat$, by \Cref{lem-sharp-over-flat-functorial} there is an $R[T,T^{-1}]$-module homomorphism $ (\sharp/\flat)\langle\rho\rangle\colon X\to C^{\sharp}/C^{\flat}$ given by the assignment $x\mapsto \rho(x)+C^{\flat}$ and so the kernel is precisely $\{x\in X\colon \rho(x)\in C^{\flat}\}$. 
    Since $C^{\sharp}=C^{\flat}+\im(\rho)$ it follows that this $R[T,T^{-1}]$-module homomorphism is surjective, and to say that $(X\mid \rho)$ meets in the radical is to say that the kernel  coincides with $\rad(X)$.
\end{proof}


\begin{prop}
    \label{prop-reductions-generalise-relations-that-split}
    If $R$ is semisimple then the following statements are equivalent for any $(M,C)$   in $\lRel{R}$. 
    \begin{enumerate}
        \item There exists a reduction $(X\mid \rho)$ of $(M,C)$ that meets in the radical. 
        \item We have $C^{\sharp}=C^{\flat}\oplus Y$ for an $R[T,T^{-1}]$-module $Y$ with $Ty=z$ if and only if $z\in Cy$ for $y,z\in Y$. 
    \end{enumerate}
\end{prop}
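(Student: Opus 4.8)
The plan is to prove the two implications directly, exploiting the fact that over a semisimple ring $R$ every module is semisimple and hence has zero $R$-radical. In particular, for any $R[T,T^{-1}]$-module $X$ one has $\rad(X)=0$, so by \Cref{def-reductions} a reduction $(X\mid\rho)$ meets in the radical precisely when $\rho^{-1}(C^{\flat})=0$; since $0\in C^{\flat}$ this already forces $\ker(\rho)=0$, i.e.\ $\rho$ is injective, and moreover $\im(\rho)\cap C^{\flat}=\rho(\rho^{-1}(C^{\flat}))=0$.

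For $(1)\Rightarrow(2)$, suppose $(X\mid\rho)$ is a reduction meeting in the radical. By the previous paragraph $\rho$ is injective and $\im(\rho)\cap C^{\flat}=0$; combined with the defining equality $C^{\sharp}=C^{\flat}+\im(\rho)$ this yields an $R$-module direct sum $C^{\sharp}=C^{\flat}\oplus\im(\rho)$. I would set $Y\coloneqq\im(\rho)$ and transport the $R[T,T^{-1}]$-module structure of $X$ along the $R$-module isomorphism $\rho\colon X\xrightarrow{\ \sim\ }Y$ (equivalently, \Cref{lem-reductions-define-surjective-automorphism-respecting-maps} here degenerates to an isomorphism $X\cong C^{\sharp}/C^{\flat}$ of $R[T,T^{-1}]$-modules). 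The reduction condition $\rho(Tx)\in C\rho(x)$ gives at once $Ty\in Cy$ for all $y\in Y$, i.e.\ the implication ``$Ty=z\Rightarrow z\in Cy$''. For the converse, suppose $z\in Cy$ with $y,z\in Y$. Then $(y,Ty)$ and $(y,z)$ lie in $C$, hence $(0,Ty-z)\in C$, that is $Ty-z\in C0$. I would record here the one-line observation that $C0\subseteq(C^{-1})'$: if $(0,m)\in C$ then $(m,0),(0,0)\in C^{-1}$, so the sequence $m,0,0,\dots$ witnesses $m\in(C^{-1})'$ in the sense of \Cref{notation-for-relations}. Since $Y\subseteq C^{\sharp}\subseteq C''$ we also have $Ty-z\in C''$, hence $Ty-z\in C''\cap(C^{-1})'\subseteq C^{\flat}$; but $Ty-z\in Y$ and $Y\cap C^{\flat}=0$, so $Ty=z$, which establishes the stated equivalence for $Y$.

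For $(2)\Rightarrow(1)$, assume $C^{\sharp}=C^{\flat}\oplus Y$ with $Y$ an $R[T,T^{-1}]$-module such that $Ty=z$ if and only if $z\in Cy$ for $y,z\in Y$. I would simply take $X\coloneqq Y$ with $\rho\colon Y\hookrightarrow M$ the inclusion: it is $R$-linear, $C^{\sharp}=C^{\flat}+\im(\rho)$ holds by hypothesis, and taking $z=Tx$ in the given equivalence yields $\rho(Tx)=Tx\in Cx=C\rho(x)$ for every $x\in Y$, so $(Y\mid\rho)$ is a reduction of $(M,C)$. Finally $\rho^{-1}(C^{\flat})=Y\cap C^{\flat}=0=\rad(Y)$ since $R$ is semisimple, so this reduction meets in the radical.

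Nothing here is deep once the right objects are chosen; the only step requiring genuine care is the reverse implication in $(1)\Rightarrow(2)$, namely pinning down the chain $Ty-z\in C0$, then $Ty-z\in C''\cap(C^{-1})'\subseteq C^{\flat}$, and then cancelling it against $Y$. So the ``main obstacle'', such as it is, amounts to careful bookkeeping with the submodules $C',C'',C^{\sharp},C^{\flat}$ of \Cref{notation-for-relations}, together with the repeatedly used point that $\rad(\,\cdot\,)=0$ over a semisimple ring is exactly what makes ``meets in the radical'' collapse to injectivity of $\rho$ away from $C^{\flat}$.
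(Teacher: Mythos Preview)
Your proof is correct and follows essentially the same approach as the paper: in both directions you make the same choices ($Y=\im(\rho)$ transported from $X$ for $(1)\Rightarrow(2)$, and $(Y\mid\text{inclusion})$ for $(2)\Rightarrow(1)$), and the key step showing $Ty-z\in C^{\flat}$ is the same computation, with your direct verification $Ty-z\in C0\subseteq C''\cap(C^{-1})'\subseteq C^{\flat}$ being an unpacked version of the paper's appeal to \Cref{lem-auto-of-sharp-over-flat} (which uses $C^{\sharp}\cap CC^{\flat}=C^{\flat}$ together with $0\in C^{\flat}$).
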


\begin{proof}
$(1\Rightarrow 2)$ 
Since $R$ is a semisimple ring the inclusion of $R$-modules $C^{\flat}\subseteq C^{\sharp}$ has a complement $Y$. 
Let $\tau=(\sharp/\flat)\langle\rho\rangle$. 
That $R$ is semisimple also implies that $\rad(X)=0$, and so  $\tau \colon X\to C^{\sharp}/C^{\flat}$ is an isomorphism of $R[T,T^{-1}]$-modules by \Cref{lem-reductions-define-surjective-automorphism-respecting-maps}. 
In particular, $\tau$ is injective, and recalling that $\tau(x)=\rho(x)+C^{\flat}$, it follows that $\im(\rho)\cap C^{\flat}=0$  and that $\rho$ is injective, for otherwise there would be $x\neq 0$ with $\tau(x)=0$. 
Now let $Y=\im(\rho)$, and write $\sigma$ for the $R$-module isomorphism $X\to Y$ sending $x\mapsto \rho(x)$. 

Since $X$ is an $R[T,T^{-1}]$ so is $Y$ by means of the formula $Ty\coloneqq \sigma (T( \sigma^{-1}(y)))$. 
On the one hand, if $y\in Y$, say of the form $y=\rho(x)$ for some $x\in X$, this means $x=\sigma^{-1}(y)$ and  so if $z=Ty$ then $z=\rho(Tx)\in C\rho(x)=Cy$. 
On the other hand, if $z\in Cy$ for $y,z\in Y$ then writing $y=\rho(x)=\sigma(x)$ we have $C^{\sharp}\ni z-Ty=z-\rho(Tx)\in C0$ which implies $Y\ni z-Ty\in C^{\flat}$ by \Cref{lem-auto-of-sharp-over-flat}, and thus $ z=Ty$.

$(2\Rightarrow 1)$ 
The pair $(Y\mid \idfunc{Y})$ is a reduction of $(M,C)$ that meets in the radical. 
To see this, note that
\[
\begin{array}{cc}
C^{\sharp}=C^{\flat}+\im(\idfunc{Y}),
     & 
     \{y\in Y\colon \idfunc{Y}(y)\in C^{\flat}\}=Y\cap C^{\flat}=0=\rad(Y),
\end{array}
\]
again since $R$ is semisimple. 
Note also that $y\in Y$ implies $Ty\in Cy$. 
\end{proof}

For use in the sequel we recall a result due to P\v{r}\'{\i}hoda  \cite{Pri2007}. 
For a ring $A$ and an $A$-module $M$ we write $\rad(M)$ for the \emph{radical} of $M$ (the intersection of the maximal, or sum of the superfluous, submodules).





\begin{thm}
\label{thm-isos-have-lifts-that-are-isos}
    \cite[Theorem~2.3]{Pri2007} Let $A$ be a ring, $L$ and $M$ be projective $A$-modules and $h\colon L/\rad(L)\to M/\rad(M)$ be an isomorphism. 
    Then there is an isomorphism $g\colon L\to M$ such that 
    \[
    \begin{array}{cc}
    g(\ell)+\rad(L)=h(\ell+\rad(L)),
         & 
         (\ell\in L).
    \end{array}
    \]
    \end{thm}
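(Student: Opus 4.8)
This is P\v{r}\'{\i}hoda's theorem on lifting isomorphisms modulo the radical for projective modules, so I would not reprove it from scratch; rather I would reduce to the known ingredients. First I would recall the two facts about projective modules over an arbitrary ring that make the proof work: (a) a projective module $P$ satisfies $\rad(P)=\jacob(A)P$, where $\jacob(A)$ is the Jacobson radical, so $P/\rad(P)$ is naturally an $A/\jacob(A)$-module and the quotient functor $P\mapsto P/\rad(P)$ is additive; and (b) any projective module is a direct summand of a free module, and superfluous epimorphisms onto projectives split. The key structural input is that if $f\colon P\to Q$ is a homomorphism of projective modules inducing an isomorphism $\bar f\colon P/\rad(P)\to Q/\rad(Q)$, then $f$ is already surjective (since $\im(f)+\rad(Q)=Q$ and $\rad(Q)$ is superfluous in $Q$) and moreover $f$ splits (because $Q$ is projective), so $P\cong Q\oplus \ker(f)$ with $\ker(f)\subseteq\rad(P)$; but a summand contained in the radical is zero, hence $f$ is an isomorphism.

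**Main steps.** With that in hand the argument runs: (1) Lift $h\colon L/\rad(L)\to M/\rad(M)$ to some $A$-linear $g_0\colon L\to M$ using projectivity of $L$ and surjectivity of $M\twoheadrightarrow M/\rad(M)$, arranging $g_0(\ell)+\rad(M)=h(\ell+\rad(L))$ for all $\ell$. (2) Symmetrically lift $h^{-1}$ to $g_1\colon M\to L$ covering $h^{-1}$. (3) Observe that $g_1 g_0\colon L\to L$ induces the identity on $L/\rad(L)$, hence by the structural fact above $g_1 g_0$ is an isomorphism; likewise $g_0 g_1$. (4) Conclude $g_0$ is an isomorphism (it has both a left and a right inverse up to the isomorphisms $g_1g_0$, $g_0g_1$): explicitly $g\coloneqq g_0$ works, since from $g_1g_0$ invertible $g_0$ is a split monic and from $g_0 g_1$ invertible $g_0$ is a split epic. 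This $g$ lifts $h$ by construction in step (1).

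**The obstacle.** The only subtle point — and the step I would dwell on — is justifying that a homomorphism of projectives inducing an iso mod radical is itself an iso, over a completely general (non-semiperfect, non-local) ring $A$. The two ingredients are that $\rad(P)=\jacob(A)P$ is a superfluous submodule of any projective $P$ (so an epimorphism onto $P$ detected mod radical is a genuine epimorphism), and that a direct summand of $P$ contained in $\rad(P)$ must vanish (again because $\rad(P)$ is superfluous: if $P=P'\oplus P''$ with $P''\subseteq\rad(P)$ then $P'+\rad(P)=P$ forces $P'=P$). Everything else is formal diagram chasing. Since the paper cites this as \cite[Theorem~2.3]{Pri2007} and only needs the statement, I would keep the exposition brief and refer to the source for the full details, merely indicating the lifting-and-inverting scheme above.

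\begin{proof}
See \cite[Theorem~2.3]{Pri2007}; we sketch the idea. Recall that for any projective $A$-module $P$ one has $\rad(P)=\jacob(A)P$, and that $\rad(P)$ is a superfluous submodule of $P$. Using projectivity of $L$ together with the surjection $M\twoheadrightarrow M/\rad(M)$, lift $h$ to an $A$-linear map $g\colon L\to M$ with $g(\ell)+\rad(M)=h(\ell+\rad(L))$ for all $\ell\in L$; symmetrically lift $h^{-1}$ to $g'\colon M\to L$ covering $h^{-1}$. Then $g'g\colon L\to L$ induces the identity on $L/\rad(L)$, so $\im(g'g)+\rad(L)=L$ and hence $g'g$ is surjective as $\rad(L)$ is superfluous; since $L$ is projective, $g'g$ splits, giving $L\cong L\oplus\ker(g'g)$ with $\ker(g'g)\subseteq\rad(L)$, which forces $\ker(g'g)=0$ because a direct summand contained in a superfluous submodule is zero. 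Thus $g'g$ is an isomorphism, and likewise $gg'$. Therefore $g$ is simultaneously a split monomorphism and a split epimorphism, hence an isomorphism, and by construction it lifts $h$.
\end{proof}
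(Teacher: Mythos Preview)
The paper does not prove this statement at all; it merely quotes it from \cite{Pri2007} and uses it as a black box. So there is nothing in the paper to compare against beyond the citation. That said, your sketch contains a genuine error, and it is precisely the reason P\v{r}\'{\i}hoda's theorem is a nontrivial result rather than a routine exercise.

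The false step is the assertion that $\rad(P)$ is a superfluous submodule of an arbitrary projective module $P$. While $\rad(P)=\jacob(A)P$ is correct, superfluousness fails once $P$ has infinite rank. Take $A=\mathbb{Z}_{p}$ (the $p$-adic integers), $P=A^{(\mathbb{N})}$ with standard basis $(e_{i})_{i\geq 1}$, and define $f\colon P\to P$ by $f(e_{i})=e_{i}-pe_{i+1}$. Modulo $p$ this is the identity, so $\im(f)+\rad(P)=P$; yet $e_{1}\notin\im(f)$, since solving $\sum_{i}(a_{i}-pa_{i-1})e_{i}=e_{1}$ forces $a_{i}=p^{i-1}$ for all $i\geq 1$, which is not a finitely supported sequence. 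Hence $\rad(P)$ is not superfluous, and an endomorphism inducing the identity on $P/\rad(P)$ need not be surjective. In your argument this breaks the inference ``$g'g$ induces the identity, hence $g'g$ is onto'': with $h=\mathrm{id}$, one may lift to $g=f$ and $g'=\mathrm{id}$, and then $g'g=f$ is not an isomorphism.

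The upshot is that \emph{some} lift of $h$ is an isomorphism, but an arbitrary lift need not be; constructing the correct lift is the actual content of P\v{r}\'{\i}hoda's paper, which uses a more delicate analysis of countably generated projective summands (via Kaplansky's theorem and trace-ideal arguments) rather than Nakayama-style reasoning. For the purposes of this paper the right move is exactly what the author does: cite \cite[Theorem~2.3]{Pri2007} and move on. If you want to keep a sketch, you should flag that the naive lifting argument works only when each $L$ and $M$ is finitely generated (which, incidentally, suffices for the application in \Cref{thm-existence-of-reductions}), and defer the general case to the reference.
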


For the proof of \Cref{thm-existence-of-reductions}, in \Cref{lem-ideal-annihilating-quotient} we observe how one of the hypothesis yields an isomorphism consistent with the conclusion of 
\Cref{lem-reductions-define-surjective-automorphism-respecting-maps}.

\begin{lem}
    \label{lem-ideal-annihilating-quotient}
    Let $(M,C)$ be an object in $\lRel{R}$. 
    If $IM\subseteq (C^{-1})'+C'$ for some ideal $I$ in $R$ then $C^{\sharp}\cap IM\subseteq C^{\flat}$ and, in particular, $C^{\sharp}/C^{\flat}$ is an $R/I$-module. 
    Furthermore, if $I=\rad(R)$ and $R$ is perfect then there is an $R[T,T^{-1}]$-module $P$ which is projective over $R$ and such that $P/IP\cong C^{\sharp}/C^{\flat}$ as $(R/I)[T,T^{-1}]$-modules. 
\end{lem}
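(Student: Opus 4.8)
The plan is to treat the three assertions separately. The first two are formal manipulations with the $R$-submodules $C'$, $C''$, $(C^{-1})'$, $(C^{-1})''$, $C^{\sharp}$, $C^{\flat}$ of $M$ recalled in \Cref{rem-unpacking-relation-notn}, while the last combines the structure theory of projective modules over a perfect ring with \Cref{thm-isos-have-lifts-that-are-isos}.

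For the inclusion $C^{\sharp}\cap IM\subseteq C^{\flat}$ I would fix $m\in C^{\sharp}\cap IM$ and use the hypothesis $IM\subseteq (C^{-1})'+C'$ to write $m=u+v$ with $u\in (C^{-1})'$ and $v\in C'$. Since $C'\subseteq C''$ and $m\in C^{\sharp}\subseteq C''$, and $C''$ is a submodule, we get $u=m-v\in C''$, hence $u\in C''\cap (C^{-1})'$; symmetrically, using $(C^{-1})'\subseteq (C^{-1})''$ and $m\in C^{\sharp}\subseteq (C^{-1})''$, we get $v=m-u\in (C^{-1})''\cap C'$. Thus $m=u+v\in C^{\flat}$. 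Since $C^{\sharp}$ is an $R$-submodule of $M$, this yields $IC^{\sharp}\subseteq C^{\sharp}\cap IM\subseteq C^{\flat}$, so $I$ annihilates $C^{\sharp}/C^{\flat}$; as the automorphism $\theta$ of \Cref{lem-auto-of-sharp-over-flat} is $R$-linear, the $R[T,T^{-1}]$-module $C^{\sharp}/C^{\flat}$ is in fact an $(R/I)[T,T^{-1}]$-module. That disposes of the first two assertions.

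For the third, write $N=C^{\sharp}/C^{\flat}$ and let $\overline{\theta}\in\operatorname{Aut}_{R/I}(N)$ be the action of $T$, which is invertible by \Cref{lem-auto-of-sharp-over-flat}. Since $R$ is perfect, $R/I=R/\rad(R)$ is semisimple with finitely many simple modules, each of which admits a projective cover over $R$; expressing $N$ as a direct sum of simple $R/I$-modules and taking the corresponding direct sum of their projective covers yields a projective $R$-module $P$ together with an $R/I$-module isomorphism $\phi\colon P/IP\xrightarrow{\ \sim\ }N$, where I use that $\rad(P)=\rad(R)P=IP$ for projective $P$ and that a projective cover of a simple module has simple top. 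Transporting $\overline{\theta}$ along $\phi$ gives an $R$-linear automorphism of $P/IP$, which by \Cref{thm-isos-have-lifts-that-are-isos} (applied with $L=M=P$) lifts to an $R$-linear automorphism $\theta_{0}$ of $P$. I then endow $P$ with the $R[T,T^{-1}]$-module structure in which $T$ acts as $\theta_{0}$; this leaves $P$ projective over $R$, and since the reduction of $\theta_{0}$ modulo $IP=\rad(P)$ is by construction the transported automorphism, the map $\phi$ is $T$-equivariant and hence an isomorphism $P/IP\cong N$ of $(R/I)[T,T^{-1}]$-modules.

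The main obstacle is the last paragraph, and specifically two points: first, that the lift $P$ of the $R/I$-module $N$ can be chosen \emph{projective} over $R$ with $\rad(P)$ exactly equal to $IP$ (so that \Cref{thm-isos-have-lifts-that-are-isos} applies with the relevant radical being precisely the kernel of reduction modulo $I$), which is where perfectness of $R$ is used; and second, that the operator $T$, which a priori only lives on the quotient $N$, lifts not merely to an endomorphism but to an \emph{automorphism} of $P$ --- exactly the content of P\v{r}\'{\i}hoda's theorem. The first two assertions are routine given \Cref{rem-unpacking-relation-notn}.
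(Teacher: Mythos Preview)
Your proposal is correct and follows essentially the same approach as the paper: the first two parts are argued identically, and for the third you construct a projective $R$-module $P$ with $P/IP\cong C^{\sharp}/C^{\flat}$ (you do this by decomposing into simples and summing their projective covers, while the paper just invokes Bass's theorem on projective covers over perfect rings), note $\rad(P)=IP$, transport the automorphism $\theta$ to $P/\rad(P)$, and lift it via P\v{r}\'{\i}hoda's \Cref{thm-isos-have-lifts-that-are-isos} to an automorphism of $P$ giving the $R[T,T^{-1}]$-structure. The only difference is the slightly more explicit construction of $P$, which is cosmetic.
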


\begin{proof}
    We claim that $IM\cap C^{\sharp}\subseteq C^{\flat}$. 
    Let $z\in IM\cap C^{\sharp}$ and so there exist $x\in(C^{-1})'$ and $y\in C'$  such that $z=x+y$. 
    In particular $x=z-y\in C''$ and similarly $y\in (C^{-1})''$ which altogether shows that $z\in C^{\flat}$ giving the claim. 
In particular $IC^{\sharp}\subseteq C^{\flat}$ and so the $R$-module $C^{\sharp}/C^{\flat}$ is
 annihilated by $I$.  

Now assume $I=\rad(R)$. 
Since $R$ is perfect it is semilocal, and so  $C^{\sharp}/C^{\flat}$ is a module over the semisimple ring $S=R/I$.
A well-known theorem of Bass \cite{Bas1960} says that any module over a perfect ring has a projective cover. 
In particular there exists a projective $R$-module $P$ and an $R$-module isomorphism $\alpha\colon P/IP\to C^{\sharp}/C^{\flat}$; see, for example, the proof of \cite[Proposition~24.12]{Lam1991}. 
By \cite[Proposition~24.6(3)]{Lam1991} we have $\rad(P)=IP$. 
By \Cref{lem-auto-of-sharp-over-flat} there is an $R$-module automorphism $\theta$ of $C^{\sharp}/C^{\flat}$ which is of course $S$-linear.

Let $\tau\coloneqq \alpha^{-1}\theta\alpha$. 
By construction we have that $P/\rad(P)$ is an $S[T,T^{-1}]$-module and that $\alpha$ is an $S[T,T^{-1}]$-module isomorphism since $\alpha\tau=\theta\alpha$. 
Using that $\tau$ is an automorphism of $P/\rad(P)$, and that $P$ is a projective $R$-module, altogether by \Cref{thm-isos-have-lifts-that-are-isos} there is an automorphism $\sigma$ of $P$ with  $\sigma(p)+\rad(P)=\tau(p+\rad(P))$ for all $p\in P$.
Hence $P$ is an $R[T,T^{-1}]$-module by letting $T$ act as $\tau$. 
\end{proof}

\begin{thm}
\label{thm-existence-of-reductions}
    Let $R$ be a local ring with maximal ideal $\jacob$ with division ring denoted by $K=R/J$. 
    If $(M,C)$ is an object of $\lRel{R}$ such that $\jacob M\subseteq (C^{-1})'+C'$ and such that $C^{\sharp}/C^{\flat}$ has finite rank over $K$, then there exists a free reduction $(X\mid \rho)$ of $(M,C)$ that meets in the radical. 
\end{thm}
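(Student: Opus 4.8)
The plan is to build the reduction $(X\mid\rho)$ directly, using the structural results already assembled: \Cref{lem-auto-of-sharp-over-flat} gives the automorphism $\theta$ of $C^{\sharp}/C^{\flat}$; \Cref{lem-ideal-annihilating-quotient} (with $I=\jacob$) tells us that $C^{\sharp}/C^{\flat}$ is a $K$-module, hence a $K[T,T^{-1}]$-module of finite $K$-dimension $d$; and \Cref{thm-isos-have-lifts-that-are-isos} will be the tool for upgrading an isomorphism modulo the radical to an honest isomorphism. Since $R$ is local it is in particular perfect (every module has a projective cover), so \Cref{lem-ideal-annihilating-quotient} actually applies: there is a projective (hence, $R$ being local, \emph{free}) $R$-module $P$ with $P/\jacob P\cong C^{\sharp}/C^{\flat}$ and a lift of $\theta$ to an automorphism $\sigma$ of $P$, making $P$ an $R[T,T^{-1}]$-module with $P/\jacob P\cong C^{\sharp}/C^{\flat}$ as $K[T,T^{-1}]$-modules. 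The finite-rank hypothesis guarantees $P\cong R^{d}$, so setting $X\coloneqq P$ we already have the candidate for the free $R[T,T^{-1}]$-module in the reduction.

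\medskip

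\emph{First I would} fix a $K$-basis of $C^{\sharp}/C^{\flat}$ and lift it to a free generating set $x_{1},\dots,x_{d}$ of $X=P$; write $\overline{x}_{i}$ for $x_{i}+\jacob X$ and let $\beta\colon X/\jacob X\xrightarrow{\ \sim\ }C^{\sharp}/C^{\flat}$ be the induced $K[T,T^{-1}]$-isomorphism from \Cref{lem-ideal-annihilating-quotient}, chosen so that $T$ on $X/\jacob X$ corresponds to $\theta$. The real content is to construct $\rho\colon X\to M$ lifting $\beta$ \emph{and} intertwining $T$ with $C$ in the strong sense $\rho(Tx)\in C\rho(x)$. \emph{The natural move} is to choose, for each generator $x_{i}$, an element $\rho(x_{i})\in C^{\sharp}$ whose class mod $C^{\flat}$ is $\beta(\overline{x}_{i})$, and then \emph{to define $\rho$ on all of $X$ $R$-linearly}; this forces $C^{\sharp}=C^{\flat}+\im(\rho)$ automatically (Nakayama, since the $\rho(x_{i})$ generate $C^{\sharp}/C^{\flat}$ over $R/\jacob=K$ and $C^{\sharp}/C^{\flat}$ is finitely generated). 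The delicate part is arranging $\rho(Tx_{i})\in C\rho(x_{i})$: since $T$ acts on $X$ by the lift $\sigma$ of $\theta$, and $\theta(\overline{x}_{i})=\overline{Tx_{i}}$ means precisely $\rho(Tx_{i})\in C^{\sharp}\cap(C^{\flat}+C\rho(x_{i}))$ by the defining formula for $\theta$ in \Cref{lem-auto-of-sharp-over-flat}, we get $\rho(Tx_{i})=k_{i}+c_{i}$ with $k_{i}\in C^{\flat}$ and $c_{i}\in C\rho(x_{i})$. This is not yet $\rho(Tx_{i})\in C\rho(x_{i})$ because of the error term $k_{i}\in C^{\flat}$.

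\medskip

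\emph{To kill the error terms} I would use the functoriality/surjectivity built into $C^{\flat}$: from the description of $C^{\flat}$ in \Cref{rem-unpacking-relation-notn} as sums of middle terms of eventually-trivial bi-infinite $C$-chains, together with $C^{\flat}=C^{\sharp}\cap CC^{\flat}=C^{\sharp}\cap C^{-1}C^{\flat}$ from \Cref{lem-auto-of-sharp-over-flat}, one has $C^{\flat}\subseteq CC^{\flat}$, so each $k_{i}\in C\widetilde{k}_{i}$ for some $\widetilde{k}_{i}\in C^{\flat}$; replacing $\rho(x_{i})$ by $\rho(x_{i})$ is the wrong direction, so instead I would set up an iterative correction — adjust $\rho$ on the $\sigma$-orbit directions — or, more cleanly, \emph{exploit that $T$ is invertible on $X$}: define $\rho$ first on a $K$-basis, then propagate along $T$-powers while absorbing the $C^{\flat}$-errors into redefinitions of $\rho$ on $\jacob X$ (where there is no constraint coming from the quotient $\beta$). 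Concretely, I expect to run an inductive/limiting argument over the filtration $X\supseteq\jacob X\supseteq\jacob^{2}X\supseteq\cdots$, using completeness of $R$ only if needed — but here $R$ is merely local, not complete, so the argument must terminate after finitely many steps, which it does because $\jacob^{n}X$ contributes nothing to $C^{\sharp}/C^{\flat}$ and the constraints $\rho(Tx)\in C\rho(x)$ can be satisfied term-by-term on a finite generating set once the $C^{\flat}$-errors are parametrised. Finally, once $\rho$ is built, \emph{meeting in the radical} — i.e.\ $\rho^{-1}(C^{\flat})=\rad(X)=\jacob X$ — follows because $(\sharp/\flat)\langle\rho\rangle\colon X\to C^{\sharp}/C^{\flat}$ is, by construction, the composite $X\twoheadrightarrow X/\jacob X\xrightarrow{\beta}C^{\sharp}/C^{\flat}$, whose kernel is exactly $\jacob X=\rad(X)$ by \Cref{lem-ideal-annihilating-quotient} (where $\rad(P)=IP$ was noted).

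\medskip

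\emph{The main obstacle} I anticipate is precisely the intertwining step: lifting the $T$-action compatibly, i.e.\ upgrading "$\rho(Tx)\equiv$ something in $C\rho(x)$ modulo $C^{\flat}$" to "$\rho(Tx)\in C\rho(x)$ on the nose", while keeping $\rho$ $R$-linear and its reduction equal to $\beta$. The resolution will hinge on the bi-infinite-chain description of $C^{\sharp}$ and $C^{\flat}$ in \Cref{rem-unpacking-relation-notn} — realising each $\rho(x_{i})$ as the middle term of a genuine bi-infinite $C$-chain and using such chains (which exist because $C^{\sharp}\subseteq CC^{\sharp}$ and $C^{\sharp}\subseteq C^{-1}C^{\sharp}$) to define $\rho$ on the $T^{\pm}$-translates, absorbing discrepancies, which lie in $C^{\flat}\cap\jacob M$-type submodules, into the free directions $\jacob X$. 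The finite-rank hypothesis is what makes this a finite bookkeeping problem rather than an infinite one, and locality of $R$ is what makes $X$ free once projective; \Cref{thm-isos-have-lifts-that-are-isos} and \Cref{lem-ideal-annihilating-quotient} package the rest.
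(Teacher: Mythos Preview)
Your overall architecture matches the paper's: use \Cref{lem-ideal-annihilating-quotient} and \Cref{thm-isos-have-lifts-that-are-isos} to produce a free $R[T,T^{-1}]$-module $X$ with $X/\jacob X\cong C^{\sharp}/C^{\flat}$, lift a basis to elements $m_{\omega}\in C^{\sharp}$, and then correct those lifts so that $\rho(Tx)\in C\rho(x)$ holds on the nose. You have also correctly isolated the one genuine difficulty. Two points, one minor and one substantive.

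\emph{Minor.} ``Since $R$ is local it is in particular perfect'' is false: a local ring is perfect only when its maximal ideal is T-nilpotent. You do not need this. Because $C^{\sharp}/C^{\flat}\cong K^{d}$ is finite-dimensional you can take $X=R^{d}$ directly; the surjection $R^{d}\to K^{d}$ is already a projective cover, and \Cref{thm-isos-have-lifts-that-are-isos} then lifts $\alpha^{-1}\theta\alpha$ to an automorphism of $R^{d}$.

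\emph{Substantive.} Your plan for ``killing the error terms'' does not work as described. The errors $\ell_{\omega}\in C^{\flat}$ are \emph{not} in $\jacob M$ in general (the hypothesis gives $\jacob M\cap C^{\sharp}\subseteq C^{\flat}$, not the reverse), so there is no reason for a $\jacob$-adic successive-approximation scheme over $X\supseteq \jacob X\supseteq \jacob^{2}X\supseteq\cdots$ to converge or terminate, and the discrepancies cannot be ``absorbed into $\jacob X$''. The paper's correction is of a completely different nature: it is a \emph{single finite} adjustment inside $C^{\flat}$. Write $T$ on $X$ by the matrix $(r_{\omega\lambda})$ and $T^{-1}$ by $(q_{\omega\lambda})$, and split each error as $\ell_{\omega}=\ell_{\omega}^{\leftarrow}+\ell_{\omega}^{\rightarrow}$ with eventually-zero $C$-chains $(\ell_{\omega}^{\leftarrow n})_{n\in\mathbb{Z}}$ and $(\ell_{\omega}^{n\rightarrow})_{n\in\mathbb{Z}}$ furnished by the very definition of $C^{\flat}$. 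One then forms, for each $n\in\mathbb{Z}$, a weighted combination $h_{\omega}^{n}$ of the $\ell_{\lambda}^{\leftarrow n}$ and $\ell_{\lambda}^{n\rightarrow}$ using the entries of $T^{n}$ (respectively $T^{-n}$), arranged so that $(h_{\omega}^{n},h_{\omega}^{n-1})$-type pairs lie in $C$ and a telescoping identity yields $\sum_{\omega}r_{\gamma\omega}h_{\omega}^{n}=k_{\gamma}^{n}$ with $k_{\gamma}^{0}$ recovering exactly $-\ell_{\gamma}$. Because $\Omega$ is finite and the chains are eventually zero, $\sum_{n}h_{\omega}^{n}$ is a finite sum, and $z_{\omega}\coloneqq m_{\omega}+\sum_{n}h_{\omega}^{n}$ satisfies $\sum_{\omega}r_{\gamma\omega}z_{\omega}\in Cz_{\gamma}$ on the nose. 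This is the missing idea: the finiteness driving the argument is the eventual vanishing of $C^{\flat}$-chains together with $\lvert\Omega\rvert<\infty$, not any $\jacob$-adic smallness.
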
 

\begin{proof}
By \Cref{lem-ideal-annihilating-quotient} that there is an $R[T,T^{-1}]$-module $X$ which is projective over $R$ and a $K[T,T^{-1}]$-module isomorphism $\alpha\colon X/\jacob X\to C^{\sharp}/C^{\flat}$. 
Since $R$ is local $X$ is free over $R$ by the well-known theorem of Kaplansky \cite{Kap1958}. 
Fix an $R$-basis $(b_{\omega}\mid \omega\in \Omega)$ of $X$. 
For each $\omega\in\Omega$ choose $m_{\omega}\in C^{\sharp}$ such that $\alpha(b_{\omega}+JX)=m_{\omega}+C^{\flat}$ and choose $r_{\omega\lambda}\in R$  such that $T(b_{\omega})=\sum_{\lambda\in\Omega}r_{\omega\lambda}b_{\lambda}$. 
It is straightforward to check that the elements $m_{\omega}+C^{\flat}$ with $\omega\in \Omega$ define a $K$-basis of $C^{\sharp}/C^{\flat}$ which means $\Omega$ is a finite set. 
%
%
%
Also,
\[
\begin{array}{c}
\sum_{\lambda\in\Omega}r_{\omega\lambda}m_{\lambda}+C^{\flat}
=
\sum_{\lambda\in\Omega}r_{\omega\lambda}\alpha(b_{\lambda}+\rad(X))
=
\alpha(\sum_{\lambda\in\Omega}r_{\omega\lambda}b_{\lambda}+\rad(X))
\\
=
\alpha(
T(b_{\omega})+\rad(X)
)
=
\theta(
\alpha(b_{\omega}+\rad(X))
)
=\theta(m_{\omega}+C^{\flat})
\end{array}
\]
 for each $\omega\in \Omega$ where $\theta $ is the automorphism from \Cref{lem-auto-of-sharp-over-flat}.
So far note that $X$ has been given the structure of an $R[T,T^{-1}]$-module. 
For each $\lambda,\omega\in \Omega$ 
 write $\delta_{\lambda\omega}$ for the Kronecker delta function, meaning that  we
let $\delta_{\lambda\omega}=1$ if $\lambda=\omega$, and let  $\delta_{\lambda\omega}=0$ if $\lambda\neq \omega$. 
For each $\lambda\in \Omega$ define the elements $q_{\lambda\gamma}\in R$ ($\gamma\in\Omega$) such that $T^{-1}(b_{\lambda})=\sum_{\gamma\in\Omega}q_{\lambda\gamma}b_{\gamma}$. 
By construction we have $\sum_{\lambda\in\Omega}r_{\omega\lambda}q_{\lambda\gamma}=\delta_{\omega\gamma}=\sum_{\lambda\in\Omega}q_{\omega\lambda}r_{\lambda\gamma}$. 

For each $\omega,\lambda\in \Omega$ and each $n\in \mathbb{Z}$ we now define elements $r_{\omega\lambda}^{n\rightarrow},r_{\omega\lambda}^{\leftarrow n}\in R$ as follows. 
Let $r_{\omega\lambda}^{0\rightarrow}\coloneqq q_{\omega\lambda}$, $r_{\omega\lambda}^{\leftarrow 1}\coloneqq -\delta_{\omega\lambda}$, $r_{\omega\lambda}^{n\rightarrow}\coloneqq 0$ for $n>0$ and  $r_{\omega\lambda}^{\leftarrow n}\coloneqq 0$ for $n\leq 0$. 
If $n\leq 0$ and $r_{\gamma\lambda}^{n\rightarrow}$ is defined for each $\gamma\in \Omega$, let $r_{\omega\lambda}^{n-1\rightarrow}\coloneqq\sum_{\gamma\in\Omega}q_{\omega\gamma}r_{\gamma\lambda}^{n\rightarrow}$. 
If $n> 0$ and $r_{\omega\gamma}^{\leftarrow n}$ is defined for each $\gamma\in \Omega$, let 
$r_{\omega\lambda}^{\leftarrow n+1}\coloneqq\sum_{\gamma\in\Omega}r_{\omega\gamma}r_{\gamma\lambda}^{\leftarrow n}$. 

Recall, from \Cref{lem-auto-of-sharp-over-flat}, that for each $\omega\in \Omega$, one has that 
\[
\begin{array}{ccc}
T(b_{\omega})=\sum_{\lambda\in\Omega}r_{\omega\lambda}b_{\lambda}
&
\text{if and only if}
&
\sum_{\lambda\in\Omega}r_{\omega\lambda}m_{\lambda}\in C^{\sharp}\cap (C^{\flat}+C m_{\omega}).
\end{array}
\]
Hence for each $\omega\in \Omega$ there are elements $\ell_{\omega}\in C^{\flat}$ and $p_{\omega}\in Cm_{\omega}$ such that $p_{\omega}=\ell_{\omega}+\sum_{\lambda\in\Omega}r_{\omega\lambda}m_{\lambda}$. 
Fix  $\omega\in \Omega$. 
Since $\ell_{\omega}\in C^{\flat}$ there exists $\ell_{\omega}^{\rightarrow}\in C'\cap (C^{-1})''$ and $\ell_{\omega}^{\leftarrow}\in C''\cap(C^{-1})'$ such that $\ell_{\omega}=\ell_{\omega}^{\leftarrow}+\ell_{\omega}^{\rightarrow}$. 

The notation from \Cref{rem-unpacking-relation-notn} gives $d_{\rightarrow}(\omega),d_{\leftarrow}(\omega)\in\mathbb{Z}$ and $\ell_{\omega}^{n \rightarrow},\ell_{\omega}^{\leftarrow n}\in M$ for each $n\in\mathbb{Z}$ such that
\[
\begin{array}{ccc}
\ell_{\omega}^{0\rightarrow}=\ell_{\omega}^{\rightarrow},
&
\ell_{\omega}^{n \rightarrow}\in C\ell_{\omega}^{n+1 \rightarrow}\text{ for all }n\in \mathbb{Z},
&
\ell_{\omega}^{n \rightarrow}=0\text{ for all }n<d_{\rightarrow}(\omega), 
\\
\ell_{\omega}^{\leftarrow 0}=\ell_{\omega}^{\leftarrow},
&
\ell_{\omega}^{n \leftarrow}\in C\ell_{\omega}^{n+1 \leftarrow}\text{ for all }n\in \mathbb{Z},
&
\ell_{\omega}^{n \leftarrow}=0\text{ for all }n >d_{\leftarrow}(\omega).
\end{array}
\]
For each $n\in \mathbb{Z}$ and each $\omega\in \Omega$ define 
$h_{\omega}^{\leftarrow n}\coloneqq\sum_{\lambda\in\Omega} r_{\omega\lambda}^{\leftarrow n}\ell_{\lambda}^{\leftarrow n}$  and  $h_{\omega}^{n\rightarrow}\coloneqq\sum_{\lambda\in\Omega}  r_{\omega\lambda}^{n\rightarrow }\ell_{\lambda}^{n \rightarrow}$. 

If $n<0$ then $n+1\leq 0$ and hence $r_{\omega\lambda}^{n \rightarrow }=r_{\omega\lambda}^{n+1-1\rightarrow }=\sum_{\zeta\in\Omega}q_{\omega\zeta}r_{\zeta\lambda}^{n+1\rightarrow}$ and therefore
\begin{equation}
\label{eqn-main-recut-theorem-1}
\begin{array}{c}
\sum_{\omega\in \Omega}r_{\gamma\omega}h_{\omega}^{n\rightarrow}
=
\sum_{\omega\in \Omega}r_{\gamma\omega}\sum_{\lambda\in\Omega}  r_{\omega\lambda}^{n\rightarrow }\ell_{\lambda}^{n \rightarrow}
=
\sum_{\omega\in \Omega}r_{\gamma\omega}\sum_{\lambda\in\Omega}  \sum_{\zeta\in\Omega}q_{\omega\zeta}r_{\zeta\lambda}^{n+1\rightarrow} \ell_{\lambda}^{n \rightarrow}
\\
=
\sum_{\lambda\in\Omega}
\sum_{\zeta\in\Omega}
(\sum_{\omega\in \Omega}r_{\gamma\omega}  q_{\omega\zeta})r_{\zeta\lambda}^{n+1\rightarrow} \ell_{\lambda}^{n \rightarrow}
=
\sum_{\lambda\in\Omega}
\sum_{\zeta\in\Omega}
\delta_{\gamma\zeta}r_{\zeta\lambda}^{n+1\rightarrow} \ell_{\lambda}^{n \rightarrow}
=
\sum_{\lambda\in\Omega}
r_{\gamma\lambda}^{n+1\rightarrow} \ell_{\lambda}^{n \rightarrow}. 
\end{array}
\end{equation}
If $n=0$ then $r_{\omega\lambda}^{0\rightarrow}= q_{\omega\lambda}$ and since we have $\sum_{\omega\in\Omega}r_{\gamma\omega}q_{\omega\lambda}=\delta_{\gamma\lambda}$ this gives
\begin{equation}
\label{eqn-main-recut-theorem-2}
\begin{array}{cc}
\sum_{\omega\in\Omega}r_{\gamma\omega}h_{\omega}^{0\rightarrow}
=
\sum_{\omega\in\Omega}r_{\gamma\omega}\sum_{\lambda\in\Omega}  r_{\omega\lambda}^{0\rightarrow }\ell_{\lambda}^{0 \rightarrow}
=
\sum_{\lambda\in\Omega}  (\sum_{\omega\in\Omega}r_{\gamma\omega}q_{\omega\lambda})\ell_{\lambda}^{0 \rightarrow}
=
\sum_{\lambda\in\Omega}  \delta_{\gamma\lambda}\ell_{\lambda}^{0 \rightarrow}
= 
\ell_{\gamma}^{0 \rightarrow}. 
\end{array}
\end{equation}
If $n>0$  then $n+1>0$ and hence $r_{\gamma\lambda}^{\leftarrow n+1}=\sum_{\omega\in \Omega}r_{\gamma\omega}r_{\omega\lambda}^{ \leftarrow n}$ and therefore
\begin{equation}
\label{eqn-main-recut-theorem-3}
\begin{array}{cc}
\sum_{\omega\in \Omega}r_{\gamma\omega}h_{\omega}^{\leftarrow n}
=
\sum_{\omega\in \Omega}r_{\gamma\omega}\sum_{\lambda\in\Omega} r_{\omega\lambda}^{\leftarrow n}\ell_{\lambda}^{\leftarrow n}
=
\sum_{\lambda\in\Omega}
(\sum_{\omega\in \Omega}r_{\gamma\omega} r_{\omega\lambda}^{\leftarrow n})\ell_{\lambda}^{\leftarrow n}
=
\sum_{\lambda\in\Omega}
r_{\gamma\lambda}^{\leftarrow n+1}\ell_{\lambda}^{\leftarrow n}. 
\end{array}
\end{equation}

For each $n\in \mathbb{Z}$ and each $\gamma\in \Omega$ define 
$k_{\gamma}^{\leftarrow n}\coloneqq\sum_{\lambda\in\Omega} r_{\gamma\lambda}^{\leftarrow n+1}\ell_{\lambda}^{\leftarrow n}$  and  $k_{\gamma}^{n\rightarrow}\coloneqq\sum_{\lambda\in\Omega}  r_{\gamma\lambda}^{n+1\rightarrow }\ell_{\lambda}^{n \rightarrow}$. 
So, 
\[
\begin{array}{c}
    (h_{\omega}^{\leftarrow n},k_{\omega}^{\leftarrow n-1})=(\sum_{\lambda\in\Omega} r_{\omega\lambda}^{\leftarrow n}\ell_{\lambda}^{\leftarrow n},\sum_{\lambda\in\Omega} r_{\omega\lambda}^{\leftarrow n}\ell_{\lambda}^{\leftarrow n-1})=\sum_{\lambda\in\Omega} r_{\omega\lambda}^{\leftarrow n}(\ell_{\lambda}^{\leftarrow n},\ell_{\lambda}^{\leftarrow n-1})\in C.
\end{array}
\]
Similarly
 $(h_{\omega}^{n\rightarrow },k_{\omega}^{n-1\rightarrow })\in C$. 
Let $h_{\omega}^{n}=h_{\omega}^{n\leftarrow}+h_{\omega}^{n\rightarrow}$ and  $k_{\omega}^{n}=k_{\omega}^{n\leftarrow}+k_{\omega}^{n\rightarrow}$ for all $n\in\mathbb{Z}$ and $\omega\in\Omega$. 
So
\[
\begin{array}{c}
k_{\gamma}^{0}-\sum_{\omega\in \Omega}r_{\gamma\omega}h_{\omega}^{0}
=k_{\gamma}^{\leftarrow 0}-\sum_{\omega\in \Omega}r_{\gamma\omega}h_{\omega}^{0\rightarrow}
=-\ell_{\gamma}^{\leftarrow 0}-\ell_{\gamma}^{0\rightarrow }=-\ell_{\gamma}.
\end{array}
\]
by \eqref{eqn-main-recut-theorem-2}.
When $n\neq 0$ we have $ k_{\gamma}^{n}=\sum_{\omega\in \Omega}r_{\gamma\omega}h_{\omega}^{n}$ by combining  $\eqref{eqn-main-recut-theorem-1}$ and  \eqref{eqn-main-recut-theorem-3}. 
Using that $\Omega$ is finite, we can define $d\coloneqq 1+\max \{\vert d_{\rightarrow}(\omega)\vert, \vert d_{\leftarrow}(\omega)\vert : \omega\in \Omega\}$, and it follows that $h_{\omega}^{n}=k_{\omega}^{n}=0$ whenever $\vert n\vert >d$, by construction. 
Now let $z_{\omega}\coloneqq m_{\omega}+\sum_{n\in\mathbb{Z}}h_{\omega}^{n}$ for each $\omega\in \Omega$. 
It follows that 
\[
\begin{array}{c}
    \sum_{\omega\in \Omega}r_{\gamma\omega}z_{\omega}
    =
    \sum_{\omega\in \Omega}r_{\gamma\omega}m_{\omega}+\sum_{\omega\in \Omega}\sum_{n\in\mathbb{Z}}r_{\gamma\omega}h_{\omega}^{n}
    =
\sum_{\omega\in \Omega}r_{\gamma\omega}m_{\omega}
    +
    \ell_{\gamma}+\sum_{n\in\mathbb{Z}}k_{\gamma}^{n}
    \\=p_{\gamma}+\sum_{n\in\mathbb{Z}}k_{\gamma}^{n}\in C(m_{\gamma}+\sum_{n\in\mathbb{Z}}h_{\gamma}^{n})=Cz_{\gamma}.
\end{array}
\]
Define the required morphism $\rho\colon X\to M$ by extending the assignment $b_{\omega}\mapsto z_{\omega}$ linearly over $R$
%
By construction we have that $\rho(Tx)\in C\rho (x)$ for any $x\in X$. 

Since the elements  $(m_{\omega}+C^{\flat})$ span the $S$-vector space $C^{\sharp}/C^{\flat}$ it follows that $C^{\sharp}=C^{\flat}+\sum_{\omega\in\Omega}Rm_{\omega}$ and so $C^{\sharp}\subseteq C^{\flat}+\im(\rho)$ and $\im(\rho)\subseteq C^{\sharp}$ giving $C^{\sharp}= C^{\flat}+\im(\rho)$. 
If  $x\in \jacob X$ then  $\rho(x)\in C^{\sharp}\cap \jacob M$ which lies in $C^{\flat}$ by \Cref{lem-ideal-annihilating-quotient}. 
Conversley assume $x\in X$ satisfies $\rho(x)\in C^{\flat}$ and write  $x=\sum_{\omega\in\Omega}r_{\omega}b_{\omega}$ for some $r_{\omega}\in R$. 
Since the elements  $m_{\omega}+C^{\flat}=z_{\omega}+C^{\flat}$ with $\omega\in\Omega$ are linearly independent over $K$, the expression
\[
\begin{array}{c}
0=\rho(x)+C^{\flat}=\rho(\sum_{\omega\in\Omega}r_{\omega}b_{\omega})+C^{\flat}=\sum_{\omega\in\Omega}r_{\omega}z_{\omega}+C^{\flat}=\sum_{\omega\in\Omega}(r_{\omega}+\jacob )(z_{\omega}+C^{\flat})
\end{array}
\]
gives $r_{\omega}\in \jacob $ for all $\omega\in\Omega$, and hence $x\in \jacob X$, as required. 
\end{proof}

By \Cref{prop-reductions-generalise-relations-that-split} it follows that, in case $R$ is a field, to say that there exists a reduction of $(M,C)$ that meets in the radical is equivalent to saying that $C$ is \emph{split} in the sense of \cite[p.~9]{Cra2018}. 
By \Cref{example-thesis-relation-ii} below it follows that \Cref{thm-existence-of-reductions} is a strict generalisation of \cite[Lemma~4.6]{Cra2018}.

\begin{example}
\label{example-thesis-relation-ii}
We make \Cref{example-thesis-relation} more concrete. 
Here we were considering an $R$-linear relation of the form $D=C^{-1}C$ where $C$ is the $R$-linear relation given by a graph. 
It follows that $D=D^{-1}$ and so $D^{\sharp}=D''$ and $D^{\flat}=D'$ by \Cref{notation-for-relations}. 
Let $R$ be a discrete valuation ring with uniformizer $\pi$, and so  $\jacob=R\pi$. 

As before $M$ is the quotient $F/K$ of the free module $F=Rz_{0}\oplus Rz_{1}$ by the submodule $K=R\pi z_{1}$, and we write $\overline{z}_{i}=z_{i} + K$. 
Hence $M\cong R\oplus R/\jacob $ as $R$-modules. 
Recall the $R$-linear relation on $M$ given by 
\[
D=\{
(r\overline{z}_{0} + s\overline{z}_{1},r'\overline{z}_{0} + s'\overline{z}_{1})\colon r,r',s,s'\in R\text{ and } r-r'\in \jacob 
\}.
\]
It follows that $D''=M$ and that $r\overline{z}_{0} + s\overline{z}_{1}\in D'$ if and only if $\pi\mid r$, meaning $D'=\jacob \overline{z}_{0}\oplus (R/\jacob )\overline{z}_{1}$. 
Note that $\jacob M=\jacob \overline{z}_{0}\subseteq D'=D'+(D^{-1})'$ and $D''/D'\cong R/\jacob $ has finite dimension over $K=R/\jacob $. 
Thus by \Cref{thm-existence-of-reductions} there is a reduction $(X\mid \rho)$ of $(M,C)$ that meets in $\jacob $. 
We claim that \Cref{prop-reductions-generalise-relations-that-split} fails in case $R$ is artinian but not a field. 
For then $\jacob^{n}=0$ for some  minimal $n>1$, and it follows that 
\[
\begin{array}{cc}
 D^{\sharp}=D''=M\cong R\oplus R/\jacob,
 &
 D^{\flat}\oplus D^{\sharp}/D^{\flat} = D'\oplus D^{\sharp}/D^{\flat}  \cong \jacob \oplus R/\jacob \oplus R/\jacob      
\end{array}
\]
which gives $ D^{\sharp}\ncong D^{\flat}\oplus D''/D'$ 
since the right-hand side is annihilated by $\jacob^{n-1}$ but the left-hand side is not. 
\end{example}

\section{Linear compactness and covering}
\label{sec-linear-compactness
}


Recall that a \emph{topological} ring (respectively, module) is one with a topology for which the binary operations of multiplication (respectively, the action) and addition, together with the unary operation of taking additive inverses, are continuous. 
For example, a \emph{filtered} ring $R$ comes equipped with a filtration, namely a descending chain of ideals $R_{n}$ ($n\in\mathbb{N}$) such that $R_{p}R_{q}\subseteq R_{p+q}$. 
Likewise, a \emph{filtered} module $M$ over such a ring comes with a descending chain of submodules $M_{n}$ ($n\in\mathbb{N}$) such that $R_{p}M_{q}\subseteq M_{p+q}$, and filtered modules are topological. 
These are the only examples of topological rings and modules that we consider.

\begin{setup}
    \label{setup-filtered}
In \S\ref{sec-linear-compactness
} we will assume that $R$ is a commutative ring which is $\mathbb{N}$-\emph{filtered} by  $\{R_{n}\mid n\geq 0\}$. 
Filtered $R$-modules $M$ will come equipped with a filtration of submodules  denoted $\{M_{n}\mid n\geq 0\}$.  
\end{setup}

In \Cref{rem-basic-comm-alg-properties} we collect together some facts about the topology for a filtered module.


\begin{rem}
    \label{rem-basic-comm-alg-properties} 
    Let $L$ and $M$ be filtered $R$-modules. 
    See \cite[Lemma~8.2.1,
    ~Exercise~8.4]{Singh-balwant-basic-comm-alg}. 
    \begin{enumerate}
        \item \label{rem-fundamental-system-neighbourhood}
The sets $m+M_{n}$ with $m\in M$ and $n\geq 0$ are all clopen sets for the topology on $M$, and together define a fundamental system of neighbourhoods of $x$. 
        In particular, $\{M_{n}\mid n\geq 0\}$ defines a fundamental system of neighborhoods of $0$. 
        This is a nice feature of filtered modules. 
        \item \label{rem-hausdorff-item}The topology on $M$ is Hausdorff if and only if $\bigcap_{n\geq0} M_{n}=0$. 
        A sequence $(m_{n}\mid n\geq 0)$ of elements $m_{n}\in M$ is Cauchy if and only if, for each $n\geq 0$, there exists $d(n)\geq 0$ such that $m_{d+1}-m_{d}\in M_{d(n)}$ for all $d\geq n$. 
        If $M$ is Hausdorff, and if every Cauchy sequence converges, then the universal morphism from $M$ to the inverse limit of $M/M_{n}$ is an isomorphism. 
        \item \label{rem-sub-quo-top-item} Any submodule $N$ of a filtered module $M$ itself is filtered by the subspace topology, given by the filtration $\{N\cap M_{n}\mid n\geq 0\}$. 
        Similarly $M/N$ is filtered by $\{(N+M_{n})/N\mid n\geq 0\}$. 
        \item An $R$-linear homomorphism $f\colon L\to M$ is continuous if and only if, for each $n\geq 0$, there exists $d(n)\geq 0$ such that $f(\ell)\in M_{n}$ for all $\ell\in L_{d(n)}$. 
        \item For a subset $S\subseteq M$ the closure of $S$ is $\bigcap_{n\geq 0}(S+M_{n})$.
        In particular, if $S=m+N$ for a submodule $N$ with $M_{n}\subseteq N$ for $n\gg 0$, it follows that $m+N$ is closed. 
    \end{enumerate}
    By (5), if $m\in M$ then any element $\ell $ lying in the closure $\bigcap_{n\geq 0}(m+M_{n})$ of $\{m\}$ satisfies $\ell-m\in \bigcap_{n\geq0} M_{n}$. 
    Considering (1), this is consistent with the fact that singletons are closed in Hausdorff topological spaces. 
    \begin{enumerate}\setcounter{enumi}{5}
        \item  \label{rem-adic-cont-item} 
For an ideal $I$ of $R$ we will say $R$ (respectively, $M$) has the $I$-\emph{adic} filtration if $R_{n}=I^{n}$ (respectively, $M_{n}=I^{n}M$) for each $n\in \mathbb{N}$. 
    If $R$, $L$ and $M$ all have the $I$-adic filtration then any $R$-linear homomorphism  $ L\to M$ is continuous by taking $d(n)=n$ in (4). 
    \item 
 \label{rem-krull-item} 
Let $\jacob$ be the jacobson radical of $R$. 
 If $R$ is noetherian then $\bigcap_{n\geq 0}J^{n}M=0$ for any finitely generated $R$-module $M$. 
    This is a consequence of the Artin--Rees theorem; see for example \cite[Corollary~8.1.4]{Singh-balwant-basic-comm-alg}. 
    \end{enumerate}
\end{rem}


We will follow the article of Zelinsky \cite{Zel1953}.

\begin{defn}
\label{defn-linea-compactness}
\cite[\S1,~p.~80,~Definition]{Zel1953} 
By a \emph{linear variety} in a filtered module $M$ we mean a coset of a submodule, that is, a subset of the form $m+N$ where $N$ is a submodule of $M$. 

A collection of subsets  of $M$ is said to have the \emph{finite intersection property} provided the intersection of any finite collection of these subsets is non-empty. 
We then say that $M$ is  \emph{linearly
compact} if  any collection of closed linear varieties with the finite intersection property has a non-empty intersection. 
\end{defn}

We will be interested in linear compact modules, and hence closed linear varieties. 

\begin{rem}
\label{rem-finite-intersections-of-closed-linear-varieties}
Recall that if $L$ and $N$ are submodules of $M$ and if $m',m''\in M$ then either $(m'+N)\cap(m''+L)=\emptyset$, or this intersection contains an element $m$ in which case $(m'+N)\cap(m''+L)=m+N\cap L$. 
%
Thus, the intersection of finitely many closed linear varieties is closed, and a linear variety in case it is non-empty. 
\end{rem}

Following results from \cite{Zel1953} we begin with some remarks on linear varieties and linearly compact modules. 
Recall from \Cref{rem-basic-comm-alg-properties}(\ref{rem-sub-quo-top-item}) that submodules and quotients of filtered modules have induced filtrations.

We fix notation for the sequel. 
Let $f\colon L\to M$  be an $R$-linear map, $m\in M$ and  $S\subseteq L$ and $T\subseteq M$ be subsets. 
%
%
Define subsets $f(S)\subseteq M$, $f^{-1}(T)\subseteq L$ and $f^{-1}(m)\subseteq L$, namely,
\[
\begin{array}{cccc}
f(S)\coloneqq \{f(\ell)\mid \ell\in S\},
&
f^{-1}(T)\coloneqq \{\ell\in L\mid f(\ell)\in T\},
&
f^{-1}(m)\coloneqq f^{-1}(\{m\})=\{\ell\in L\mid f(\ell)=m\}. 
\end{array}
\]
If $S$ (respectively, $T$) is an $R$-submodule then so is $f(S)$ (respectively, $f^{-1}(T))$. 
Note $\ker(f)=f^{-1}(0)$.

\begin{lem}
\label{lem-combining-lin-compact-with-relations-prelim}
The following statements hold a continuous $R$-linear map $f\colon L\to M$ of filtered modules. 
\begin{enumerate}
\item Let $m\in M$. 
If $\bigcap_{n\geq 0} M_{n}=0$ then $f^{-1}(m)$ is closed. 
If $f^{-1}(m)\neq \emptyset$ then  $f^{-1}(m)$ is a linear variety.  
\item If $L$ and $M$ are linearly compact then $f(N)$ is closed in $M$ for any closed submodule $N$ of $L$. 
\end{enumerate} 
\end{lem}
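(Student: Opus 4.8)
The plan is to prove the two items separately, using the filtration description of the topology from Remark \ref{rem-basic-comm-alg-properties} throughout.

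For item (1), I would start from the hypothesis that $f$ is continuous, so for each $n\geq 0$ there is $d(n)\geq 0$ with $f(L_{d(n)})\subseteq M_{n}$. First observe that if $f^{-1}(m)\neq\emptyset$, picking $\ell_{0}\in f^{-1}(m)$ gives $f^{-1}(m)=\ell_{0}+\ker(f)$, which is a linear variety; this is immediate since $f(\ell)=m$ iff $f(\ell-\ell_{0})=0$. For closedness under the hypothesis $\bigcap_{n\geq 0}M_{n}=0$: the set $f^{-1}(m)$ is empty (hence closed) or equals $\ell_{0}+\ker(f)$, so it suffices to show $\ker(f)$ is closed, equivalently (by Remark \ref{rem-basic-comm-alg-properties}(5)) that $\bigcap_{n\geq 0}(\ker(f)+L_{n})\subseteq\ker(f)$. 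If $\ell$ lies in this intersection, then for every $n$ write $\ell=k_{n}+\lambda_{n}$ with $k_{n}\in\ker(f)$, $\lambda_{n}\in L_{n}$; then $f(\ell)=f(\lambda_{n})$, and choosing $n=d(m)$ large enough (i.e.\ using continuity) forces $f(\ell)\in M_{m}$ for all $m$, hence $f(\ell)\in\bigcap_{m}M_{m}=0$, so $\ell\in\ker(f)$. Here a small care is needed: I should argue directly that for each $m\geq 0$, taking $n=d(m)$ gives $\ell=k_{d(m)}+\lambda_{d(m)}$ with $\lambda_{d(m)}\in L_{d(m)}$ so $f(\ell)=f(\lambda_{d(m)})\in f(L_{d(m)})\subseteq M_{m}$.

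For item (2), let $N$ be a closed submodule of $L$; I want to show $f(N)$ is closed in $M$, i.e.\ that $\bigcap_{n\geq 0}(f(N)+M_{n})\subseteq f(N)$. Fix $m$ in this intersection. For each $n\geq 0$ the set $A_{n}\coloneqq f^{-1}(m)\cap(N+L_{n})$ is nonempty: indeed $m\in f(N)+M_{n}$ means $m=f(x)+y$ with $x\in N$, $y\in M_{n}$, and since $f$ is continuous... actually the cleanest route is: for each $n$, $m\in f(N)+M_{n}$; I want some $\ell\in N+L_{n'}$ with $f(\ell)=m$ for suitable $n'$. This requires surjectivity-type control, so instead I would work with the family of closed linear varieties $\{f^{-1}(m)\}\cup\{N+L_{n}\mid n\geq 0\}$ inside $L$ — wait, $f^{-1}(m)$ may be empty. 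The correct approach: show the family $\mathcal{C}=\{\,(N+L_{n})\cap f^{-1}(M_{n}')\,\}$... Let me instead use that $M$ linearly compact implies $f(N)$ is itself linearly compact as a continuous image — but images of closed submodules need not be closed in general topological groups, so the linear compactness of both $L$ and $M$ is essential. The clean argument: $N$ closed in the linearly compact $L$ implies $N$ is linearly compact (a closed linear variety in a linearly compact module is linearly compact, since any collection of closed linear varieties of $N$ with the finite intersection property is such a collection in $L$, using that closed-in-$N$ equals closed-in-$L$ intersected with $N$ and $N$ itself is a member). Then $f(N)$ is a continuous image of the linearly compact module $N$, hence linearly compact — for this I need that the continuous image of a linearly compact module is linearly compact, which follows because preimages under $f$ of closed linear varieties are closed linear varieties by continuity and item (1)-type reasoning, so a finite-intersection-property family in $f(N)$ pulls back to one in $N$. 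Finally, a linearly compact submodule of the Hausdorff-or-not module $M$ is closed: given the family $\{f(N)+M_{n}\mid n\geq 0\}$ together with a translate, linear compactness of $f(N)$ lets one produce a point in $f(N)$ in the closure. I would spell this last step via: for $m\in\overline{f(N)}$, the sets $(m-f(N)+M_n)\cap f(N)$... more simply, $f(N)$ linearly compact and the cosets $m+M_n$ closed linear varieties give that $\{f(N)\cap(m+M_n)\mid n\geq0\}$ has the finite intersection property (as $m\in f(N)+M_n$ for all $n$), hence nonempty intersection, and any point there lies in $f(N)$ and in $\bigcap_n(m+M_n)$; combined with the appropriate Hausdorff-type bookkeeping this forces $m\in f(N)$.

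The main obstacle I anticipate is item (2): the statement that a continuous image of a linearly compact module is linearly compact, and that a linearly compact submodule of a filtered module is closed, are the real content. The first needs the pullback-of-closed-linear-varieties fact (essentially item (1), which is why the lemma bundles them together), and the second is a Zelinsky-style argument \cite{Zel1953} that I should either cite or reprove carefully, paying attention to whether $M$ is assumed Hausdorff — if not, "closed" must be interpreted via $\bigcap_n(S+M_n)$ and the argument produces a point of $f(N)$ in every $m+M_n$, which is what closedness in the filtration topology demands.
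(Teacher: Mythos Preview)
Your proposal is correct and, for part~(2), follows exactly the paper's strategy: closed in linearly compact implies linearly compact, continuous image of linearly compact is linearly compact, and a linearly compact submodule is closed. The paper simply cites \cite[Propositions~2,~3,~7]{Zel1953} for these three steps, whereas you sketch them; your sketches are fine, and your remark that the last step is ``Zelinsky-style'' is exactly right.

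For part~(1) you take a more hands-on route than the paper. The paper observes in one line that $\bigcap_{n}M_{n}=0$ makes $M$ Hausdorff (Remark~\ref{rem-basic-comm-alg-properties}(\ref{rem-hausdorff-item})), so $\{m\}$ is closed, and hence $f^{-1}(m)$ is closed by continuity of $f$; no case analysis or computation is needed. Your argument instead reduces to showing $\ker(f)$ is closed and verifies this directly from the filtration description of the closure. This is correct (and the implicit step that a coset of a closed submodule is closed is immediate since translation is a homeomorphism), but it is longer and requires the empty/nonempty case split that the paper's argument avoids. What your approach buys is that it never leaves the filtration language, which may feel more concrete; what the paper's approach buys is brevity and the observation that only the Hausdorff property of $M$ is being used.
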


\begin{proof}
(1) By \Cref{rem-basic-comm-alg-properties}(\ref{rem-hausdorff-item}) the topology on $M$ is Hausdorff, and so singletons are closed. 
Hence, by assuming $f$ is continuous, the preimage $f^{-1}(m)$ of the closed set $\{m\}$ is closed. 
Now suppose $f^{-1}(m)\neq\emptyset$ and so there is some $\ell\in f^{-1}(m)$ meaning that $\ell\in L$ and $f(\ell)=m$. 
Let $K=f^{-1}(0)$, the kernel of $f$, which is a submodule of $L$. 
We claim $f^{-1}(m)=\ell+K$. 
Clearly $f^{-1}(m)\supseteq \ell+K$. 
Now for any $\ell'\in f^{-1}(m)$ we have $\ell'=\ell+\ell'-\ell$ where $\ell'-\ell\in K$ $f(\ell')=m=f(\ell)$, giving the reverse inclusion.

(2)  By  \cite[Proposition~3]{Zel1953} we have that $N$ is linearly compact, since it is closed inside the linearly compact module $L$. 
By \cite[Proposition~2]{Zel1953} this means the image $f(N)$ of $N$ under $f$ is again linearly compact. 
Recall from \Cref{rem-basic-comm-alg-properties}(\ref{rem-fundamental-system-neighbourhood})  that the submodules $M_{n}$ define a fundamental system of neighbourhoods of $0\in M$. 
By  \cite[Proposition~7]{Zel1953} we have that $f(N)$ is closed inside $M$ since it is a linearly compact submodule of $M$.  
\end{proof}

We note how the conclusion of \Cref{lem-combining-lin-compact-with-relations-prelim} holds if one swaps the graph of $f$ with the inverse relation.

\begin{rem}
\label{rem-converse-to-relations-prelim-lin-compact}
Let $f\colon L\to M$ be a continuous $R$-module homomorphism. 
For any $\ell \in L$ we have $\{f(\ell)\}=f(\ell)+0$. Clearly this singleton is a non-empty linear variety, which closed in $M$ if $\bigcap_{n\geq 0} M_{n}=0$, since then $M$ is Hausdorff by \Cref{rem-basic-comm-alg-properties}(\ref{rem-hausdorff-item}). 
Also, if $N$ is a closed submodule of $M$ then the preimage $f^{-1}(N)$ is a submodule since $f$ is $R$-linear, and closed since $f$ is continuous. 
\end{rem}

We are interested in diagrams in $\lMod{R}$ of certain shapes, generalising the category defined by the poset $\mathbb{N}$. 
Such shapes are encoded in a quiver whose underlying graph is a binary tree. 

\begin{defn}
\label{defn-A-inf-reps}
   
%
Recall the quiver $\mathcal{B}$ from \eqref{binary-tree}, whose underlying graph is the rooted complete binary tree. 
We give a precise definition of $\mathcal{B}$, as follows. 
   Let $[1]^{[0]}\coloneqq\emptyset$ and for an integer $n>0$ let $[1]^{[n]}$ be the set of sequences $\sigma\colon \{1,\dots,n\}\to  \{0,1\}$. 
 Let  $\mathcal{B}_{0}\coloneqq \bigcup_{n\in\mathbb{N}}[1]^{[n]}$ where $n$ defines the depth of a vertex in the underlying binary tree. 
For $\sigma\in [1]^{[n]}$ define $(\sigma\pm)\in [1]^{[n+1]}$  by $(\sigma\pm)(i)\coloneqq \sigma(i)$ for $i\leq n$, $(\sigma+)(n+1)\coloneqq 1$ and $(\sigma-)(n+1)\coloneqq 0$. 
Let $\mathcal{B}_{1}=\bigcup_{\sigma\in [1]^{[n]}}\{\Rceil_{\sigma},\Rfloor_{\sigma}\}$ and define head and tail functions $h_{\mathcal{B}},t_{\mathcal{B}}\colon \mathcal{B}_{1}\to \mathcal{B}_{0}$ by
\[
\begin{array}{cccc}
    h_{\mathcal{B}}(\Rceil_{\sigma})
    \coloneqq
    \sigma+,
     &
     t_{\mathcal{B}}(\Rceil_{\sigma})
     \coloneqq
     \sigma,
     &
     h_{\mathcal{B}}(\Rfloor_{\sigma})
     \coloneqq
     \sigma,
     &
     t_{\mathcal{B}}(\Rfloor_{\sigma})
     \coloneqq
     \sigma-.
\end{array}
    \]
A \emph{ray}\footnote{This is a term from graph theory, in that a ray here is the same a (rooted) ray for the underlying graph of $\mathcal{B}$; see for example \cite[pp.~202-203,~Figure~8.1.4]{diestel-graph}.} of the binary tree is a sequence $\overline{\sigma}=(\sigma^{i})$ of depth-$i$ vertices where $\sigma^{i+1}=\sigma^{i} \pm$ for each $i$. 
We define a \emph{ray}-\emph{representation} as the restriction of a $\mathcal{B}$-diagram in $\lMod{R}$ to the full subcategory given by a ray.
\end{defn}

\begin{rem}
    Any ray $\overline{\sigma}=(\sigma^{i})$ necessarily starts with the root $\sigma^{0}=\emptyset$, since this is the unique depth-$0$ vertex.  
    The notation $\sigma\pm $ is supposed to be mnemonic, in that  $\sigma+$ (respectively, $\sigma-$) is found by travelling from $\sigma$ upward and forwards along $\Rceil_{\sigma}$ (respectively, downward and backwards along $\Rfloor_{\sigma}$).  
    For example $(((\emptyset-)+)+)-=((0+)+)-=(01+)-=011-=0110$ is found by travelling along $\Rfloor$, $\Rceil_{0}$, $\Rceil_{01}$ and $\Rfloor_{011}$.  
\end{rem}
\begin{notn}
\label{notn-for-rays}
There is a bijection between rays and functions $\mathbb{N}_{>0}\to \{-,+\}$ where  $\overline{\sigma}=(\sigma^{i})$ corresponds to the  function $\underline{\sigma}=(\sigma_{i})\in\prod_{i>0}\{-,+\}$
if and only if $\sigma^{i+1}=\sigma^{i}\sigma_{i+1}$ for each $i\in\mathbb{N}$. 
If $(\sigma^{i})$ is a ray and $\mathscr{M}$ is  $\mathcal{B}$-diagram in $\lMod{R}$ let $(M[i],\mu_{i})$ denote\footnote{Clearly we are suppressing the dependency of $(M[i],\mu_{i})$ on $\overline{\sigma}$. 
This does not cause problems in the sequel. } the corresponding ray-representation. 
That is,  $M[i]\coloneqq \mathscr{M}(\sigma^{i})$, $\mu_{i}\coloneqq \mathscr{M}(\Rfloor_{\sigma^{i}})$ when $\sigma_{i}=-$ and $\mu_{i}\coloneqq \mathscr{M}(\Rceil_{\sigma^{i}})$ when $\sigma_{i}=+$. 
Any ray-representation  $(M[i],\mu_{i})$ is depicted 
\[
\begin{tikzcd}
M[0]
\arrow[r, -, "{\mu_{1}}"]
&
M[1]
\arrow[r, -, "{\mu_{2}}"]
&
M[2]
\arrow[r, -]
&
\cdots
\arrow[r, -]
&
M[n-1]
\arrow[r, -, "{\mu_{n}}"]
&
M[n]
\arrow[r, -]
&
\cdots
\end{tikzcd}
\]
where $\mu_{i}$ points left when $\sigma_{i}=-$ and right when $\sigma_{i}=+$. 
Given $(M[i],\mu_{i})$ we define $R$-linear relations  
%
 \[
 \begin{array}{cc}
 \widetilde{\mu}_{i}\coloneqq\{(m,\mu_{i}(m))\mid m\in M[i]\}\text{ when }\sigma_{i}=-,
 &
 \widetilde{\mu}_{i}\coloneqq\{(\mu_{i}(m'),m')\mid m'\in M[i-1]\}\text{ when }\sigma_{i}=+.
 \end{array}
 \]
Finally let $\mu_{0}=\idfunc{M[0]}$, $\widetilde{\mu}_{\leq 0}=\graph{\idfunc{M[0]}}$ and, for each $n> 0$, let $\widetilde{\mu}_{\leq n}\coloneqq \widetilde{\mu}_{1}\dots \widetilde{\mu}_{n}$ following \Cref{notn-composing-linear-relations}. 
\end{notn}

\begin{example}
\label{example-unpacking-a-inf-diagrams}
We unpack \Cref{defn-A-inf-reps} and \Cref{notn-for-rays}. 
Consider the ray defined by $\sigma_{i}=+$ if and only if $i$ is even. 
So $\overline{\sigma}=(\emptyset,1,10,101,1010,\dots)$. 
For a $\mathcal{B}$-diagram $\mathscr{M}$ in $\lMod{R}$ consider the ray-representation
\[
\begin{tikzcd}[column sep = 1.2cm]
M[0]
\arrow[r, "{\mu_{1}}", "{\mathscr{M}(\Rceil)}"']
&
M[1]
&
M[2]\arrow[l, "{\mathscr{M}(\Rfloor_{1})}", "{\mu_{2}}"']
\arrow[r, "{\mu_{3}}", "{\mathscr{M}(\Rceil_{10})}"']
&
M[3]
&
M[4]\arrow[l, "{\mathscr{M}(\Rfloor_{101})}", "{\mu_{4}}"']
\arrow[r, "{\mu_{5}}", "{\mathscr{M}(\Rceil_{1010})}"']
&
M[5]
&
M[6]\arrow[l, "{\mathscr{M}(\Rfloor_{10101})}", "{\mu_{6}}"']
\arrow[r]
&
\cdots
\end{tikzcd}
\]
Combining with \Cref{notn-generalised-image} we have, for any subset $S\subseteq M[5]$, that
\[
\widetilde{\mu}_{\leq 5}S=\widetilde{\mu}_{1}\widetilde{\mu}_{2}\widetilde{\mu}_{3}\widetilde{\mu}_{4}\widetilde{\mu}_{5}S=\left\{m\in M[0]\left\vert\, \begin{array}{c}
\text{there exists } m'\in M[2],m''\in M[4]\text{ such that }
\\
\mu_{1}(m)=\mu_{2}(m'),\mu_{3}(m')=\mu_{4}(m''),\mu_{5}(m'')\in S
\end{array}\right.\right\}.
\]
\end{example} 

Our focus, for the remainder of the paper, is understanding the $R$-submodule $ \bigcap_{i\in\mathbb{N}}\widetilde{\mu}_{\leq i}M[i]$ of $M[0]$. 

\begin{lem}
\label{lem-finite-covering-cases}
The following statements hold for a ray-representation $(M[i],\mu_{i})$. 
\begin{enumerate}
\item If   $i\in \mathbb{N}$ then  $\widetilde{\mu}_{\leq i}M[i]\supseteq \widetilde{\mu}_{\leq i+1}M[i+1]$ and $\widetilde{\mu}_{\leq i}0\subseteq \widetilde{\mu}_{\leq i+1}0$ and $\bigcup_{i\in \mathbb{N}}\widetilde{\mu}_{\leq i}0\subseteq \bigcap_{i\in\mathbb{N}}\widetilde{\mu}_{\leq i}M[i]$. 
\item If $K= \{0\}\cup \{j\mid \sigma_{j}=-\}$  is finite then $ \bigcap_{i\in\mathbb{N}}\widetilde{\mu}_{\leq i}M[i]= \widetilde{\mu}_{\leq k}M[k]$ where $k=\max K$. 
\item If $K= \{0\}\cup \{j\mid \sigma_{j}=+\}$  is finite then $ \bigcup_{i\in\mathbb{N}}\widetilde{\mu}_{\leq i}0= \widetilde{\mu}_{\leq k}0$ where $k=\max K$. 
\end{enumerate}
\end{lem}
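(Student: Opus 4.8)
The plan is to reduce the three parts to monotonicity of relational composition together with two short computations isolating the effect of a single relation $\widetilde{\mu}_{j}$ on the extreme subsets $0$ and $M[j]$. Throughout I would use, without further comment, that $S\mapsto CS$ is monotone for a fixed relation $C$, that $(DC)S = D(CS)$, and that $\widetilde{\mu}_{\leq j} = \widetilde{\mu}_{\leq j-1}\widetilde{\mu}_{j}$, all immediate from \Cref{notn-generalised-image} and \Cref{notn-for-rays}.

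For part (1), I would obtain the first inclusion by writing $\widetilde{\mu}_{\leq i+1}M[i+1] = \widetilde{\mu}_{\leq i}\bigl(\widetilde{\mu}_{i+1}M[i+1]\bigr)$ and noting $\widetilde{\mu}_{i+1}M[i+1]\subseteq M[i]$, so that monotonicity of $\widetilde{\mu}_{\leq i}$ gives the claim. The second inclusion comes from the fact that $\widetilde{\mu}_{i+1}$ is an $R$-submodule, so $0\in\widetilde{\mu}_{i+1}0$ and hence $\widetilde{\mu}_{\leq i}0 \subseteq \widetilde{\mu}_{\leq i}(\widetilde{\mu}_{i+1}0) = \widetilde{\mu}_{\leq i+1}0$. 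The third statement is then purely order-theoretic: the chain $(\widetilde{\mu}_{\leq i}M[i])_{i}$ decreases, the chain $(\widetilde{\mu}_{\leq i}0)_{i}$ increases, and $\widetilde{\mu}_{\leq i}0 \subseteq \widetilde{\mu}_{\leq i}M[i]$ for each $i$ by monotonicity, so taking $n = \max(i,j)$ yields $\widetilde{\mu}_{\leq i}0 \subseteq \widetilde{\mu}_{\leq n}0 \subseteq \widetilde{\mu}_{\leq n}M[n] \subseteq \widetilde{\mu}_{\leq j}M[j]$ for all $i,j$.

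The computations powering parts (2) and (3) are the following. If $\sigma_{j} = +$ then $\widetilde{\mu}_{j} = \graph{\mu_{j}}^{-1}$ where $\mu_{j}$ is defined on all of $M[j-1]$, so $\widetilde{\mu}_{j}M[j] = M[j-1]$ and hence $\widetilde{\mu}_{\leq j}M[j] = \widetilde{\mu}_{\leq j-1}M[j-1]$; dually, if $\sigma_{j} = -$ then $\widetilde{\mu}_{j} = \graph{\mu_{j}}$ with $\mu_{j}(0) = 0$, so $\widetilde{\mu}_{j}0 = 0$ and hence $\widetilde{\mu}_{\leq j}0 = \widetilde{\mu}_{\leq j-1}0$. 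For part (2), since $k = \max K$ every $j > k$ has $\sigma_{j} = +$, so an induction on $i \geq k$ using the first computation gives $\widetilde{\mu}_{\leq i}M[i] = \widetilde{\mu}_{\leq k}M[k]$; combined with the decreasing chain from part (1) for $i \leq k$, this gives $\bigcap_{i}\widetilde{\mu}_{\leq i}M[i] = \widetilde{\mu}_{\leq k}M[k]$. Part (3) is the mirror image: every $j > k$ has $\sigma_{j} = -$, so induction with the second computation gives $\widetilde{\mu}_{\leq i}0 = \widetilde{\mu}_{\leq k}0$ for $i \geq k$, and with the increasing chain from part (1) we get $\bigcup_{i}\widetilde{\mu}_{\leq i}0 = \widetilde{\mu}_{\leq k}0$.

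I do not anticipate a genuine obstacle; the closest thing is the bookkeeping around the composition convention of \Cref{notn-composing-linear-relations}. One must keep straight that $\widetilde{\mu}_{\leq n}$ is a relation from $M[n]$ to $M[0]$, that applying it to a subset $S \subseteq M[n]$ means forming the image $\widetilde{\mu}_{\leq n}S \subseteq M[0]$, and that $\widetilde{\mu}_{\leq j} = \widetilde{\mu}_{\leq j-1}\widetilde{\mu}_{j}$ composes in that order. Once this is pinned down, each of the two computations in the previous paragraph is a direct unwinding of the definition of $\widetilde{\mu}_{j}$ from \Cref{notn-for-rays}, and the rest is elementary.
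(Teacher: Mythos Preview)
Your proof is correct and follows essentially the same approach as the paper: both arguments hinge on the monotonicity of $S\mapsto CS$, the fact that $(0,0)\in\widetilde{\mu}_{j}$, and the observations that $\widetilde{\mu}_{j}M[j]=M[j-1]$ when $\sigma_{j}=+$ and $\widetilde{\mu}_{j}0=\{0\}$ when $\sigma_{j}=-$. The only difference is stylistic---the paper phrases parts (2) and (3) by explicitly extending or truncating tuples $(m_{0},\dots,m_{i})$, whereas you package the same computation as an induction in the relational algebra---but the content is identical.
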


\begin{proof}
By construction, for $i\in\mathbb{N}$,  $m\in\widetilde{\mu}_{\leq i}M[i]$ if and only if we have a tuple of elements  $(m_{0},\dots,m_{i})\in M[0]\times \dots \times M[i]$ such that $m=m_{0}$ and $m_{h-1}\in \widetilde{\mu}_{h}m_{h}$ whenever $0<h\leq i$. 
Likewise, to say that $m\in\widetilde{\mu}_{\leq i}0$ is to say that such a tuple exists where $m_{i}=0$.  
We use this notation in what follows. 

(1) The first inclusion is trivial. 
The other inclusions follow by observing that $(0,0)\in \widetilde{\mu}_{h}$ for any $h\in\mathbb{N}$. 

(2) By (1) we have $\bigcap_{i\in\mathbb{N}}\widetilde{\mu}_{\leq i}M[i]=  \bigcap_{i\geq k}\widetilde{\mu}_{\leq i}M[i] \subseteq \widetilde{\mu}_{\leq k}M[k]$. 
Now let $m\in \widetilde{\mu}_{\leq k}M[k]$ and construct a tuple $(m_{0},\dots,m_{k})$ as above. 
For $j>k$ we require an extension of this sequence. 
Namely, we require a tuple $(m_{k+1},\dots,m_{j})$ where  $m_{h-1}\in \widetilde{\mu}_{h}m_{h}$ whenever $k<h\leq j$. 
For each such $h$ we have $\sigma_{h}=+$ by construction, so $m_{h-1}\in \widetilde{\mu}_{h}m_{h}$ holds if and only if $m_{h}=\mu_{h}(m_{h-1})$, and so it suffices to let $m_{h}=\mu_{h}(\dots(\mu_{k+1}(m_{k}))\dots)$.   

(3) Clearly $\bigcup_{i\in\mathbb{N}}\widetilde{\mu}_{\leq i}0\supseteq \widetilde{\mu}_{\leq k}0$. 
Let $i\in \mathbb{N}$ and $m\in \widetilde{\mu}_{\leq i}0$. 
We claim $m\in \widetilde{\mu}_{\leq k}0$. 
By (1) it suffices to assume $i>k$. 
As above we have a tuple $(m_{0},\dots,m_{i})$ where $m_{i}=0$. 
Since $\sigma_{k+1}=\dots=\sigma_{i}=-$ by construction, it follows that $m_{k}=\mu_{k+1}(m_{k+1})=\dots=\mu_{k+1}(\dots(\mu_{i}(m_{i}))\dots)=0$ as required. 
\end{proof}

\Cref{weak-covering-lemma} below generalises the \emph{weak covering lemma} \cite[Lemma~10.3]{Cra2018} to the language of relations. 

\begin{lem}
\label{weak-covering-lemma}
Let $\mathscr{M}$ be a $\mathcal{B}$-diagram in $\lMod{R}$ such that $\im (\mathscr{M}(\Rfloor_{\sigma}))\subseteq \ker(\mathscr{M}(\Rceil_{\sigma}))$ for all $\sigma\in\mathcal{B}_{0}$. 
If $0\neq m\in \mathscr{M}(\emptyset)$ then there exists a ray $(\sigma^{i})$ in $\mathcal{B}$
such that exactly one of the following statements holds.
\begin{enumerate}
\item There exists $j\in \mathbb{N}$ such that $ m\in \widetilde{\mu}_{\leq j} \ker(\mathscr{M}(\Rceil_{\sigma^{j}}))\setminus \widetilde{\mu}_{\leq j}  \im (\mathscr{M}(\Rfloor_{\sigma^{j}}))$.
\item For each $j\in \mathbb{N}$ we have $m\in \widetilde{\mu}_{\leq j}M[j]\setminus \widetilde{\mu}_{\leq j}0$, and so $m\in \bigcap_{i\in\mathbb{N}}\widetilde{\mu}_{\leq i}M[i]\setminus \bigcup_{i\in \mathbb{N}}\widetilde{\mu}_{\leq i}0$.  
\end{enumerate}
\end{lem}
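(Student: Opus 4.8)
The plan is to build the ray $(\sigma^i)$ recursively, at each stage choosing the extension $\sigma^{i+1} = \sigma^i+$ or $\sigma^i-$ according to where the element $m$ can still be "traced back", and then to show that the dichotomy (1) versus (2) emerges from whether this tracing ever gets stuck. Concretely, at stage $i$ I will maintain the invariant that $m \in \widetilde{\mu}_{\leq i} M[i] \setminus \widetilde{\mu}_{\leq i} 0$, i.e.\ $m$ lifts to a tuple $(m_0, \dots, m_i)$ with $m_0 = m$, $m_{h-1} \in \widetilde{\mu}_h m_h$ for $0 < h \le i$, but no such tuple has $m_i = 0$. The hypothesis $\im(\mathscr{M}(\Rfloor_\sigma)) \subseteq \ker(\mathscr{M}(\Rceil_\sigma))$ is exactly what is needed to make the recursion coherent, since it says $\widetilde{\mu}$'s composed along a "$\Rfloor$ then $\Rceil$" turn behave well with respect to kernels and images.

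First I would set up the base case: since $0 \neq m \in \mathscr{M}(\emptyset) = M[0]$ and $\widetilde{\mu}_{\leq 0} = \graph{\idfunc{M[0]}}$, we have $m \in \widetilde{\mu}_{\leq 0} M[0] \setminus \widetilde{\mu}_{\leq 0} 0$ trivially. For the recursive step, suppose $m \in \widetilde{\mu}_{\leq i} M[i] \setminus \widetilde{\mu}_{\leq i} 0$. Using the notation of the proof of \Cref{lem-finite-covering-cases}, $m$ lifts through some tuple ending in $M[i]$. I then ask whether there exists a lift $(m_0,\dots,m_i)$ with $m_i \in \ker(\mathscr{M}(\Rceil_{\sigma^i}))$; equivalently, whether $m \in \widetilde{\mu}_{\leq i}\ker(\mathscr{M}(\Rceil_{\sigma^i}))$. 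If $m \in \widetilde{\mu}_{\leq i}\ker(\mathscr{M}(\Rceil_{\sigma^i}))$ but $m \notin \widetilde{\mu}_{\leq i}\im(\mathscr{M}(\Rfloor_{\sigma^i}))$, then alternative (1) holds with $j = i$ and we stop, completing the ray arbitrarily (say by always appending $-$); the set-up guarantees this is genuinely an "exactly one" situation because in case (1) we have $m \notin \widetilde{\mu}_{\leq i} \im(\mathscr{M}(\Rfloor_{\sigma^i})) \supseteq \cdots$, so $m$ does not lift to $0$ at any later stage, whereas case (2) asserts it does for none; I will need to argue these are mutually exclusive by a monotonicity argument using \Cref{lem-finite-covering-cases}(1).

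The crux of the recursion is the case where we do not stop. Here either (a) no lift of $m$ reaches $\ker(\mathscr{M}(\Rceil_{\sigma^i}))$, in which case I set $\sigma^{i+1} = \sigma^i+$ (so $\widetilde{\mu}_{i+1} = \graph(\mathscr{M}(\Rceil_{\sigma^i}))^{-1}$) and must check $m \in \widetilde{\mu}_{\leq i+1}M[i+1] \setminus \widetilde{\mu}_{\leq i+1}0$: the first containment holds because any lift $(m_0,\dots,m_i)$ extends by $m_{i+1} := \mathscr{M}(\Rceil_{\sigma^i})(m_i)$, and the exclusion $m \notin \widetilde{\mu}_{\leq i+1}0$ holds because a lift with $m_{i+1} = 0$ would force $m_i \in \ker(\mathscr{M}(\Rceil_{\sigma^i}))$, contradicting (a); or (b) some lift reaches $\ker(\mathscr{M}(\Rceil_{\sigma^i}))$ but \emph{also} $m \in \widetilde{\mu}_{\leq i}\im(\mathscr{M}(\Rfloor_{\sigma^i}))$, in which case I set $\sigma^{i+1} = \sigma^i-$ (so $\widetilde{\mu}_{i+1} = \graph(\mathscr{M}(\Rfloor_{\sigma^i}))$) and check the invariant: $m \in \widetilde{\mu}_{\leq i+1}M[i+1]$ because a lift landing in $\im(\mathscr{M}(\Rfloor_{\sigma^i}))$ extends by a preimage $m_{i+1}$ with $\mathscr{M}(\Rfloor_{\sigma^i})(m_{i+1}) = m_i$, and $m \notin \widetilde{\mu}_{\leq i+1}0$ because a lift with $m_{i+1} = 0$ gives $m_i = \mathscr{M}(\Rfloor_{\sigma^i})(0) = 0$, so $m \in \widetilde{\mu}_{\leq i}0$, contradicting the invariant at stage $i$. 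Running this recursion forever either terminates (alternative (1)) or produces a ray with $m \in \widetilde{\mu}_{\leq j}M[j] \setminus \widetilde{\mu}_{\leq j}0$ for all $j$, which by \Cref{lem-finite-covering-cases}(1) gives $m \in \bigcap_i \widetilde{\mu}_{\leq i}M[i] \setminus \bigcup_i \widetilde{\mu}_{\leq i}0$ (alternative (2)).

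The main obstacle I anticipate is verifying genuine mutual exclusivity and making the case split in the recursive step airtight: I must be careful that in case (b), choosing $\sigma^{i+1} = \sigma^i-$, the condition defining case (1) cannot spuriously become true at a later stage along the chosen ray — this is where the hypothesis $\im(\mathscr{M}(\Rfloor_\sigma)) \subseteq \ker(\mathscr{M}(\Rceil_\sigma))$ does real work, since after a "$\Rfloor$" turn the new tail element automatically lies in the kernel of the next "$\Rceil$". I would make this precise by checking that whenever the ray takes a $-$ step at stage $i$, the element $m$ remains in $\widetilde{\mu}_{\leq i+1}\ker(\mathscr{M}(\Rceil_{\sigma^{i+1}}))$ automatically (from the containment hypothesis), so that the "$m\notin \widetilde{\mu}_{\leq j}\im(\mathscr{M}(\Rfloor_{\sigma^j}))$" clause of (1) is the only way case (1) can trigger, and that clause is incompatible with the invariant $m\notin\widetilde{\mu}_{\leq j}0$ being maintainable indefinitely. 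A clean way to organize this is to phrase the recursion as: stop and declare (1) at the first $j$ (if any) where $m \in \widetilde{\mu}_{\leq j}\ker(\mathscr{M}(\Rceil_{\sigma^j}))$ yet $m \notin \widetilde{\mu}_{\leq j}\im(\mathscr{M}(\Rfloor_{\sigma^j}))$; otherwise at each stage append $+$ if $m \notin \widetilde{\mu}_{\leq j}\ker(\mathscr{M}(\Rceil_{\sigma^j}))$ and append $-$ if $m \in \widetilde{\mu}_{\leq j}\ker(\mathscr{M}(\Rceil_{\sigma^j})) \cap \widetilde{\mu}_{\leq j}\im(\mathscr{M}(\Rfloor_{\sigma^j}))$, and then verify the invariant propagates in each branch as above.
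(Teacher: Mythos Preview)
Your proposal is correct and follows essentially the same approach as the paper: the same recursive case split (append $+$ when $m\notin\widetilde{\mu}_{\leq j}\ker(\mathscr{M}(\Rceil_{\sigma^j}))$, append $-$ when $m\in\widetilde{\mu}_{\leq j}\im(\mathscr{M}(\Rfloor_{\sigma^j}))$, stop with (1) otherwise), the same invariant $m\in\widetilde{\mu}_{\leq j}M[j]\setminus\widetilde{\mu}_{\leq j}0$, and the same verification that the invariant propagates. Your final paragraph over-worries: once you have organized the recursion as ``stop at the first $j$ where (1) triggers, else append $\pm$ according to the dichotomy'', the hypothesis $\im(\mathscr{M}(\Rfloor_\sigma))\subseteq\ker(\mathscr{M}(\Rceil_\sigma))$ is used only to make the two non-stopping branches mutually exclusive, and mutual exclusivity of (1) and (2) follows directly from the observation that if (1) holds at $j$ and you complete the ray with $-$ then $m\notin\widetilde{\mu}_{\leq j+1}M[j+1]$, while if the ray continues with $+$ then $m\in\widetilde{\mu}_{\leq j+1}0$.
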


\begin{proof}
We begin by explaining why, for any ray, (1) and (2) cannot hold simultaneously.
Suppose that (1) holds for some ray $(\sigma^{i})$ and some $j\in\mathbb{N}$, and choose $m_{j}\in \ker(\mathscr{M}(\Rceil_{\sigma^{j}}))\subseteq  M[j]$ such that $m\in \widetilde{\mu}_{\leq j}m_{j}$. 
If $\sigma_{j}=-$ then $\mu_{j+1}=\mathscr{M}(\Rfloor_{\sigma^{j}})$ which means 
$m_{j}=\mu_{j+1}(m_{j+1})$ for some $m_{j+1}\in M[j+1]$, but this is impossible, since then $m\in \widetilde{\mu}_{\leq j}\im(\mathscr{M}(\Rfloor_{\sigma^{j}}))$. 
If $\sigma_{j}=+$ then $\mu_{j+1}=\mathscr{M}(\Rceil_{\sigma^{j}})$ which gives $\mu_{j+1}(m_{j})=0$ and so $m\in\widetilde{\mu}_{\leq j+1}0$, meaning that (2) does not hold. 
So (1) and (2) are mutually exclusive. 

We now assume (1) is false for all rays, and prove (2) holds for some ray $(\sigma^{i})$. 
We begin by iteratively defining $(\sigma^{i})$. 
Suppose that $j\geq 0$ and that  we have chosen 
a sequence $(\sigma^{0},\dots,\sigma^{j})$ of vertices in $\mathcal{B}$, such that if $0\leq i<j$ then $\sigma^{i+1}=\sigma^{i}\pm$. 
Let $M[0],\dots,M[j]$ be the images of $(\sigma^{0},\dots,\sigma^{j})$ under $\mathscr{M}$. 
Let $\tau=\sigma^{j}$.
Since we are assuming (1) to be false, it is impossible that both $m\in \widetilde{\mu}_{\leq j}\ker(\mathscr{M}(\Rceil_{\tau}))$ and $m\notin \widetilde{\mu}_{\leq j}\im (\mathscr{M}(\Rfloor_{\tau}))$.

It follows that we must have $m\notin\widetilde{\mu}_{\leq j}\ker(\mathscr{M}(\Rceil_{\tau}))$ or $m\in \widetilde{\mu}_{\leq j}\im (\mathscr{M}(\Rfloor_{\tau}))$, and not both, since $\im (\mathscr{M}(\Rfloor_{\tau}))$ is contained in $\ker(\mathscr{M}(\Rceil_{\tau}))$ by hypothesis.  
If $m\notin\widetilde{\mu}_{\leq j}\ker(\mathscr{M}(\Rceil_{\tau}))$ then let $\sigma_{j+1}=+$, and otherwise if 
$m\in \widetilde{\mu}_{\leq j}\im (\mathscr{M}(\Rfloor_{\tau}))$ then let  $\sigma_{j+1}=-$. 

With $(\sigma^{i})$ defined, it suffices to let $j\geq0$ and prove  $m\in  \widetilde{\mu}_{\leq j}M[j]\setminus \widetilde{\mu}_{\leq j}0$. 
Since $m\neq 0$ and $\widetilde{\mu}_{\leq 0}=\graph{\idfunc{M[0]}}$, the case where $j=0$ is immediate, and we proceed by induction. 
So suppose $m\in  \widetilde{\mu}_{\leq j}m'$ for some $m'\neq 0$.  
On the one hand, when $\sigma_{j+1}=+$ we have $\mu_{j+1}=\mathscr{M}(\Rceil_{\tau})$  by construction, and so setting $m''=\mu_{j+1}(m')$ we must have $m''\neq 0$. 
On the other hand, when  $\sigma_{j+1}=-$ we have $\mu_{j+1}=\mathscr{M}(\Rfloor_{\tau})$ by construction, and since $0\neq m'\in \im(\mu_{j+1})$ there must exist $m''\in M[j+1]$ with $m''\neq 0$ and $m'=\mu_{j+1}(m'')$. 
In either case it follows that $m\in  \widetilde{\mu}_{\leq j+1}M[j+1]\setminus \widetilde{\mu}_{\leq j+1}0$, as required. 
\end{proof}

In \Cref{prop-prod-and-coprod-work-nice} we note how ray-representations behave well with respect to  products and coproducts. 

\begin{prop}
\label{prop-prod-and-coprod-work-nice}
Let $\overline{\sigma}$ be a fixed ray and  $S$ be an index set. 
For each $s\in S$ let $\mathscr{M}^{s}$ be a $\mathcal{B}$-diagram in $\lMod{R}$ and $(M^{s}[i],\mu^{s}_{i})$ be the corresponding ray-representation. 
    The following statements hold for  $n\in \mathbb{N}$. 
    \begin{enumerate}
        \item The product and coproduct give ray-representations $(M^{\Pi}[i],\mu^{\Pi}_{i}),\,(M^{\oplus}[i],\mu^{\oplus}_{i})$ respectively. 
        \item For the relations $\widetilde{\mu}_{i}^{\Pi}$ associated to $(M^{\Pi}[i],\mu^{\Pi}_{i})$ we have $\widetilde{\mu}^{\Pi}_{\leq n}M^{\Pi}[n]=\prod_{s\in S} \widetilde{\mu}^{s}_{\leq n}M^{s}[n]$. 
        \item For the relations $\widetilde{\mu}^{\oplus}_{i}$ associated to $(M^{\oplus}[i],\mu^{\oplus}_{i})$ we have $\widetilde{\mu}^{\oplus}_{\leq n}M^{\oplus}[n]=\bigoplus_{s\in S} \widetilde{\mu}^{s}_{\leq n}M^{s}[n]$. 
    \end{enumerate}
\end{prop}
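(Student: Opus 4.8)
The plan is to obtain (1) from the pointwise computation of (co)limits of $\mathcal{B}$-diagrams, and then to obtain (2) and (3) by unpacking $\widetilde{\mu}_{\leq n}$ into a finite system of equations that manifestly distributes over products and coproducts; I do not expect a genuine obstacle, only some finite-support bookkeeping in the coproduct case.

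For (1): a $\mathcal{B}$-diagram in $\lMod{R}$ is a functor $\mathcal{B}\to\lMod{R}$, and since $\lMod{R}$ is complete and cocomplete the product $\prod_{s}\mathscr{M}^{s}$ and coproduct $\bigoplus_{s}\mathscr{M}^{s}$ in the category of such diagrams are formed vertexwise, with componentwise structure maps (the same reasoning used for colimits in $\lRep{R}(\Kron)$ before \Cref{lem-directed-colimits-closure}). Since these (co)limits are computed vertex by vertex, restricting to the ray $\overline{\sigma}$ preserves them; hence $(M^{\Pi}[i],\mu^{\Pi}_{i})$ with $M^{\Pi}[i]=\prod_{s}M^{s}[i]$ and $\mu^{\Pi}_{i}$ componentwise is the ray-representation of $\prod_{s}\mathscr{M}^{s}$, and symmetrically $(M^{\oplus}[i],\mu^{\oplus}_{i})$ is the ray-representation of $\bigoplus_{s}\mathscr{M}^{s}$.

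For (2) and (3): recall from the start of the proof of \Cref{lem-finite-covering-cases} that, for a ray-representation $(M[i],\mu_{i})$, one has $m\in\widetilde{\mu}_{\leq n}M[n]$ if and only if there is a tuple $(m_{0},\dots,m_{n})\in M[0]\times\cdots\times M[n]$ with $m_{0}=m$ and, for each $0<h\leq n$, $m_{h-1}=\mu_{h}(m_{h})$ when $\sigma_{h}=-$ and $m_{h}=\mu_{h}(m_{h-1})$ when $\sigma_{h}=+$. As each $\mu^{\Pi}_{h}$ (respectively $\mu^{\oplus}_{h}$) acts componentwise, for a tuple $m_{h}=(m^{s}_{h})_{s}$ the step-$h$ equation holds precisely when the corresponding equation holds in $M^{s}$ for every $s$. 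Therefore a witnessing tuple for $(m^{s})_{s}\in\widetilde{\mu}^{\Pi}_{\leq n}M^{\Pi}[n]$ restricts coordinatewise to a witnessing tuple for each $m^{s}\in\widetilde{\mu}^{s}_{\leq n}M^{s}[n]$, and conversely choosing (by the axiom of choice) one witnessing tuple per $s$ and assembling them coordinatewise gives a witnessing tuple for $(m^{s})_{s}$; this is (2). For (3) the same coordinatewise argument applies, and the only extra point is that all modules involved sit inside the corresponding products as their submodules of finitely supported tuples: forwards, $(m^{s})_{s}\in M^{\oplus}[0]$ has finite support $F=\{s:m^{s}\neq 0\}$; backwards, one uses the zero tuple as witness for each $s\notin F$, so each assembled $m_{h}=(m^{s}_{h})_{s}$ has support inside the finite set $F$ and hence lies in $M^{\oplus}[h]$. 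The whole argument needs no exactness of infinite (co)limits — $\widetilde{\mu}_{\leq n}$ is a finite composite of graphs and inverse graphs of componentwise maps — and the only delicate point is exactly this finite-support check in the coproduct case.
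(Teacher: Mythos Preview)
Your proposal is correct and follows essentially the same route as the paper: pointwise computation of (co)limits for (1), and the coordinatewise unpacking of the tuple description of $\widetilde{\mu}_{\leq n}M[n]$ for (2) and (3). Your treatment of (3) is in fact slightly more careful than the paper's one-line reduction to (2), since you make explicit the choice of zero witnesses outside the finite support to keep the assembled tuple inside $M^{\oplus}[h]$.
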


\begin{proof}
    (1) 
Let $M^{\Pi}[i]=\prod_{s\in S}M^{s}[i]$, $M^{\oplus}[i]=\bigoplus_{s\in S}M^{s}[i]$, $\mu^{\Pi}_{i}((m^{s}))=(\mu^{s}_{i}(m^{s}))$ and $\mu^{\oplus}_{i}((\ell^{s}))=(\mu^{s}_{i}(\ell^{s}))$ for each $i\in \mathbb{N}$ and $\ell^{s},m^{s}\in M^{s}[i]$ where $\ell^{s}=0$ for all but finitely many $s$. 
    
    Since $\sigma^{i}$ is fixed as $s$ varies, each $(M^{s}[i],\mu^{s}_{i})$ is a diagram in $\lMod{R}$ of the same shape. 
    The claim follows since limits and colimits in the category of functors $\overline{\sigma}\to \lMod{R}$ are computed pointwise. 

    (2) Let $(m^{s}_{0})\in \widetilde{\mu}^{\Pi}_{\leq n}M^{\Pi}[n]$, meaning there are elements $(m^{s}_{i})\in M^{\Pi}[i]$ such that $(m^{s}_{i-1})\in \widetilde{\mu}^{\Pi}_{i}(m^{s}_{i})$ for each $1\leq i \leq n$. 
    For each $i$ and $s$ write $\pi_{i}^{s}\colon M^{\Pi}[i]\to M^{s}[i]$ for the canonical projection. 
    If  $\sigma_{i}=-$ we have 
    \[
    \begin{array}{c}
    m^{s}_{i-1}
    =
    \pi_{i}^{s}((m^{s}_{i-1}))
    =
    \pi_{i}^{s}(\mu^{\Pi}_{i}((m^{s}_{i})))
    =
    \pi_{i}^{s}((\mu^{s}_{i}(m^{s}_{i}))
    =\mu_{i}^{s}(m^{s}_{i}),
    \end{array}
    \]
    and, similarly, if $\sigma_{i}=+$ then $m^{s}_{i}=\mu_{i-1}^{s}(m^{s}_{i-1})$ for any $s$ and $i$ with $1\leq i\leq n$. 
    Thus $m_{0}^{s}\in \widetilde{\mu}_{\leq n}^{s}M^{s}[n]$. 
    
    Conversely, if $m_{0}^{s}\in \widetilde{\mu}_{\leq n}^{s}M^{s}[n]$ for each $s$ one can use a similar argument to prove $(m^{s}_{0})\in \widetilde{\mu}^{\Pi}_{\leq n}M^{\Pi}[n]$. 

    (3) The argument in (2) holds when, for each $i$, we have $m_{i}^{s}=0$ for all but finitely many $s$. 
    \end{proof}
In the sequel we adapt the \emph{realization lemma}  \cite[Lemma~10.2]{Cra2018} to the language of relations. 
This combines the notion of linear compactness recalled in \Cref{defn-linea-compactness} with the following concept. 

\begin{defn}
    \label{def-compact-cont-haus}
   A ray-representation $(M[i],\mu_{i})$  is \emph{compact}-\emph{continuous}-\emph{Hausdorff} if, for each $i\in \mathbb{N}$,  $M[i]$ is filtered and linearly compact, $\mu_{i}$ is continuous and  $\bigcap_{n\geq0}M_{n}[i]=0$ (so the topology is Hausdorff). 
\end{defn}

In \Cref{lem-compact-cont-haus-ainf-reps-exist} we provide sufficient conditions for \Cref{def-compact-cont-haus} to hold. 

\begin{lem}
\label{lem-compact-cont-haus-ainf-reps-exist}
    Let $(M[i],\mu_{i})$ be an  ray-representation and assume that $R$ and each $M[i]$ have the  $\jacob$-adic filtration where $\jacob$ is the jacobson radical of $R$. 
    If $R$  is $J$-adically complete, noetherian and semilocal\footnote{This means that $S=R/\jacob$ is a semisimple ring. Since $R$ is commutative, this is equivalent to requiring that $R$ has  finitely many maximal ideals, or to requiring that $S$ is a finite product of fields.} and if  each $M[i]$ is finitely generated, then $(M[i],\mu_{i})$  is compact-continuous-Hausdorff. 
\end{lem}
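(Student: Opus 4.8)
The plan is to verify the three requirements of \Cref{def-compact-cont-haus} for each $M[i]$ in turn, drawing on standard commutative algebra facts recalled in \Cref{rem-basic-comm-alg-properties} and the literature on linear compactness cited around \Cref{defn-linea-compactness}. The overall structure is: (a) the Hausdorff condition $\bigcap_{n\geq 0}\jacob^{n}M[i]=0$; (b) continuity of each $\mu_{i}$; (c) linear compactness of each $M[i]$.

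First I would dispatch continuity. Since $R$, $M[i-1]$ and $M[i]$ all carry the $\jacob$-adic filtration, \Cref{rem-basic-comm-alg-properties}(\ref{rem-adic-cont-item}) applies verbatim: every $R$-linear map between $\jacob$-adically filtered modules is continuous, taking $d(n)=n$. As each $\mu_{i}$ (being $\mathscr{M}(\Rceil_{\sigma^{i}})$ or $\mathscr{M}(\Rfloor_{\sigma^{i}})$) is $R$-linear, it is continuous. Next, for the Hausdorff condition I would invoke \Cref{rem-basic-comm-alg-properties}(\ref{rem-krull-item}): $R$ is noetherian and each $M[i]$ is finitely generated, so by Krull's intersection theorem (the Artin--Rees consequence cited there) $\bigcap_{n\geq 0}\jacob^{n}M[i]=0$, which by \Cref{rem-basic-comm-alg-properties}(\ref{rem-hausdorff-item}) is exactly the statement that the $\jacob$-adic topology on $M[i]$ is Hausdorff.

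The substantive step, and the one I expect to be the main obstacle, is linear compactness of each finitely generated $M[i]$. The strategy is to reduce to the base ring: $R$ itself, with its $\jacob$-adic topology, is linearly compact. This is where completeness, the noetherian hypothesis, and semilocality all get used — $R$ is $\jacob$-adically complete (hence Hausdorff and complete as a topological ring), noetherian, and semilocal, so $R\cong \varprojlim R/\jacob^{n}$ is an inverse limit of (finite products of) artinian rings; a complete noetherian semilocal ring with the $\jacob$-adic topology is linearly compact by the results of Zelinsky \cite{Zel1953} (linear compactness of complete, Hausdorff, finitely generated modules over such rings). Concretely I would argue: $R/\jacob^{n}$ is artinian, hence a module of finite length, hence linearly compact as a discrete module; an inverse limit of linearly compact modules is linearly compact; and the $\jacob$-adic completeness of $R$ identifies $R$ with $\varprojlim R/\jacob^{n}$ via \Cref{rem-basic-comm-alg-properties}(\ref{rem-hausdorff-item}). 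Then, since $M[i]$ is finitely generated, choose a surjection $R^{k}\twoheadrightarrow M[i]$; the product $R^{k}$ of finitely many copies of the linearly compact module $R$ is linearly compact (finite products of linearly compact modules are linearly compact, \cite[Proposition~2 or the finite-products remark]{Zel1953}), and $M[i]$ is a continuous quotient — indeed the $\jacob$-adic topology on $M[i]$ agrees with the quotient topology from $R^{k}$, since $\jacob^{n}M[i]$ is the image of $\jacob^{n}R^{k}$ — so by \cite[Proposition~2]{Zel1953} (quotients of linearly compact modules by closed submodules are linearly compact, and here the kernel is closed because $M[i]$ is Hausdorff) $M[i]$ is linearly compact.

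The one point requiring care is ensuring the quotient topology on $M[i]$ from $R^{k}$ coincides with its given $\jacob$-adic topology, so that the Zelinsky quotient result genuinely applies; this holds because $\jacob^{n}M[i]$ is precisely the image of $\jacob^{n}(R^{k}) = (\jacob^{n})^{k}$ under the surjection, giving the fundamental system of neighbourhoods of $0$ on both sides. With that observed, assembling (a), (b), (c) shows $(M[i],\mu_{i})$ is compact-continuous-Hausdorff, completing the proof.
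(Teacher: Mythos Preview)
Your proposal is correct and follows essentially the same route as the paper: continuity and the Hausdorff property are dispatched via \Cref{rem-basic-comm-alg-properties}(\ref{rem-adic-cont-item}) and (\ref{rem-krull-item}), and linear compactness of each $M[i]$ is obtained by showing $R/\jacob^{n}$ is artinian hence linearly compact, passing to the inverse limit to get $R$ linearly compact, then to finite products $R^{k}$, and finally to the finitely generated quotient $M[i]$ using Zelinsky's permanence results. Your explicit check that the quotient topology on $M[i]$ from $R^{k}$ coincides with the $\jacob$-adic topology is a detail the paper leaves implicit but is indeed needed.
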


\begin{proof}
Note that here we are assuming that $R_{n}=\jacob^{n}$ and $M_{n}[i]=\jacob^{n}M[i]$ for each $n\geq 0$.  

By \Cref{rem-basic-comm-alg-properties}(\ref{rem-adic-cont-item}) we have that each $\mu_{i}$ is continuous, and by \Cref{rem-basic-comm-alg-properties}(\ref{rem-krull-item}) we have that each $M[i]$ is Hausdorff. 
Thus it suffices to prove finitely generated $R$-modules must be linearly compact. 

Firstly, observe that artinian filtered modules are linearly compact, since by \cite[Proposition~5]{Zel1953}, any filtered module with the descending chain condition on closed submodules must be linearly compact. 
Secondly, observe that the inverse limit of a system of filtered (respectively,  linearly compact) modules is filtered (respectively,  linearly compact) modules; 
see for example \cite[\S8.2.2]{Singh-balwant-basic-comm-alg} (respectively, 
 \cite[Proposition~4]{Zel1953}).

Since $R$ is noetherian and semilocal we have that $R/J^{n}$ has finite-length for each $n>0$; see for example \cite[Exercise~7.16]{Singh-balwant-basic-comm-alg}.  
Furthermore by \Cref{rem-basic-comm-alg-properties}(\ref{rem-sub-quo-top-item}) $R/J^{n}$ is filtered, and hence linearly compact by our first observation. 
Since $R$ is $\jacob$-adically complete, as an $R$-module it is linearly compact by our second observation. 
As noted in \cite[Proposition~1]{Zel1953}, any product of linearly compact modules is again linearly compact, and so $R^{d}$ is a linearly compact $R$-module for any $d>0$. 
By \Cref{rem-basic-comm-alg-properties}(\ref{rem-adic-cont-item})  and  \cite[Proposition~2]{Zel1953} it follows that any finitely generated $R$-module is linearly compact, as required. 
\end{proof}

In \Cref{lem-closure-lin-varieties-with-relations} we consider consequences of \Cref{lem-combining-lin-compact-with-relations-prelim} in terms of \Cref{def-compact-cont-haus}. 
This result may be considered an infinite version of the phenomena discussed in \Cref{example-unpacking-a-inf-diagrams}. 

\begin{lem}
\label{lem-closure-lin-varieties-with-relations}
Let $(M[i],\mu_{i})$ be a compact-continuous-Hausdorff ray-representation and let $i\in \mathbb{N}$. 
\begin{enumerate}
\item If $m\in M[i+1]$ then $\widetilde{\mu}_{i}m$ is closed in $M[i]$ and $\widetilde{\mu}_{i}m$ is a linear variety if it is non-empty.  
\item If $N$ is a closed submodule of $M[i+1]$ then $\widetilde{\mu}_{i}N$ is a closed submodule of $M[i]$. 
\end{enumerate} 
Consequently, we have the following equality of $R$-submodules of $M[0]$
\[
\begin{array}{c}
\bigcap_{i\geq0} \widetilde{\mu}_{\leq i}M[i]=\left\{m\in M[0]\left\vert\, \begin{array}{c}
\text{there exists }(m_{i})\in \prod_{i\geq 0}M[i]\text{ such that }
\\
 m=m_{0}\text{ and }(m_{i},m_{i-1})\in \widetilde{\mu}_{i}\text{ for }i>0
\end{array}\right.\right\}.
\end{array}
\]
\end{lem}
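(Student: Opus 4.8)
The proof has two clearly separated components. First I would establish the local statements (1) and (2), which are routine applications of \Cref{lem-combining-lin-compact-with-relations-prelim} together with \Cref{rem-converse-to-relations-prelim-lin-compact}, split according to whether $\sigma_i=-$ or $\sigma_i=+$. Then I would prove the displayed equality by a compactness argument, using (2) to show the relevant submodules are closed and (1) to set up a finite-intersection-property family inside the linearly compact module $M[0]$.

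\textbf{The local statements.} Fix $i\in\mathbb N$. If $\sigma_i=-$, then by \Cref{notn-for-rays} we have $\widetilde\mu_i=\graph{\mu_i}^{-1}$ where $\mu_i=\mathscr M(\Rfloor_{\sigma^i})\colon M[i+1]\to M[i]$; wait, I must be careful with indexing: $\widetilde\mu_i$ is a relation from $M[i-1]$ to $M[i]$, so here $\mu_i\colon M[i]\to M[i-1]$ and $\widetilde\mu_i m = \{m'\in M[i-1]\mid \mu_i(m')=\dots\}$ — I would instead phrase everything as: $\widetilde\mu_i m = \mu_i^{-1}(m)$ when $\sigma_i=-$ (with $\mu_i\colon M[i]\to M[i-1]$), and $\widetilde\mu_i m = \{\mu_i(m)\}$ when $\sigma_i=+$ (with $\mu_i\colon M[i-1]\to M[i]$, so $\widetilde\mu_i$ is $\graph{\mu_i}^{-1}$). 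In the first case, (1) is exactly \Cref{lem-combining-lin-compact-with-relations-prelim}(1) applied to $f=\mu_i$: since the topology on $M[i-1]$ is Hausdorff ($\bigcap_n M_n[i-1]=0$) and $\mu_i$ is continuous, $\mu_i^{-1}(m)$ is closed, and it is a linear variety (a coset of $\ker\mu_i$) when non-empty. For (2), $\widetilde\mu_i N = \mu_i^{-1}(N)$, which is a submodule since $\mu_i$ is $R$-linear and closed since $\mu_i$ is continuous and $N$ is closed — this is \Cref{rem-converse-to-relations-prelim-lin-compact}. In the second case ($\sigma_i=+$): $\widetilde\mu_i m=\{\mu_i(m)\}$ is a singleton, hence a non-empty linear variety, closed by Hausdorffness; and $\widetilde\mu_i N=\mu_i(N)$ is a submodule, closed by \Cref{lem-combining-lin-compact-with-relations-prelim}(2) since $M[i-1], M[i]$ are linearly compact and $N$ is closed.

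\textbf{The displayed equality.} Write $P$ for the right-hand set. The inclusion $P\subseteq\bigcap_{i\ge0}\widetilde\mu_{\le i}M[i]$ is immediate: given $(m_i)$ with $m_0=m$ and $(m_i,m_{i-1})\in\widetilde\mu_i$ for $i>0$, the truncation $(m_0,\dots,m_i)$ witnesses $m\in\widetilde\mu_{\le i}M[i]$ for every $i$, using the unpacking of $\widetilde\mu_{\le i}M[i]$ recalled at the start of the proof of \Cref{lem-finite-covering-cases}. For the reverse inclusion, fix $m\in\bigcap_{i\ge0}\widetilde\mu_{\le i}M[i]$. For each $i\ge0$ consider
\[
V_i \coloneqq \{\, m_i\in M[i] \;\mid\; \text{there is } (m_0,\dots,m_i)\text{ with } m_0=m,\ (m_h,m_{h-1})\in\widetilde\mu_h \text{ for } 0<h\le i \,\},
\]
the set of ``$i$-th coordinates of length-$i$ witnesses for $m$.'' Then $m\in\widetilde\mu_{\le i}M[i]$ says exactly $V_i\ne\emptyset$. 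Each $V_i$ is obtained from the previous data by finitely many applications of $\widetilde\mu_h(-)$ and preimages along projections onto consecutive coordinates, i.e. $V_i = \bigcap$ of finitely many sets of the form (preimage of a closed linear variety under a continuous map); by parts (1) and (2) above and \Cref{rem-finite-intersections-of-closed-linear-varieties}, each $V_i$ is a non-empty closed linear variety in the linearly compact Hausdorff module $M[i]$. Now I would build an element of $P$ by a back-and-forth / inverse-limit argument: the images $\mu$ of the maps in the ray send $V_{i+1}$ into $V_i$ (when $\sigma_{i+1}=+$) or $V_i$ is the $\mu$-image/preimage correspondingly; the key point is that the system $(V_i)$ is a cofiltered system of non-empty closed linear varieties whose inverse limit is non-empty. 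Concretely: for each $j$, the set $W_j^{(i)}\subseteq M[i]$ of $i$-th coordinates that extend to a length-$j$ witness (for $j\ge i$) is a descending chain of closed linear varieties in $M[i]$, non-empty for all $j$ since $V_j\ne\emptyset$; by linear compactness of $M[i]$, $\bigcap_{j\ge i}W_j^{(i)}\ne\emptyset$, and these intersections are compatible under the ray maps, so one can choose $m_i\in\bigcap_j W_j^{(i)}$ coherently, yielding a sequence $(m_i)$ witnessing $m\in P$.

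\textbf{Main obstacle.} The routine parts are the local statements and the easy inclusion. The real work is the compactness argument for the hard inclusion: one must organize the finitely-many-operations description of $V_i$ carefully enough to invoke \Cref{lem-combining-lin-compact-with-relations-prelim} and \Cref{rem-finite-intersections-of-closed-linear-varieties} so that each $V_i$ (and each $W_j^{(i)}$) is genuinely a \emph{closed} linear variety, and then produce a \emph{coherent} choice of $m_i$ across all $i$ — i.e. ensure the chosen coordinates actually assemble into a single sequence with $(m_i,m_{i-1})\in\widetilde\mu_i$, rather than just existing separately at each level. The cleanest route is probably to apply linear compactness of $M[0]$ directly to the family $\{\widetilde\mu_{\le j}0\text{-translates}\}$ — no: rather, apply it to the descending family of closed linear varieties $\widetilde\mu_{\le j}(\text{point})$ obtained by fixing partial witnesses — and then iterate the selection down the ray, at each stage using that the relevant fibre is a non-empty closed linear variety in a linearly compact module. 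I expect this coherent-selection step to be the crux and to require the most care in the write-up.
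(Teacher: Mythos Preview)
Your approach is correct and, by the end, converges on the paper's method. For (1) and (2) the paper simply cites \Cref{lem-combining-lin-compact-with-relations-prelim}, \Cref{rem-converse-to-relations-prelim-lin-compact} and the definitions, exactly as you do (your indexing hesitation is justified: the lemma statement itself has an off-by-one in the subscript of $\widetilde\mu$). For the hard inclusion the paper carries out precisely the iterative selection you describe in your final sentences, but organises it so that the coherence problem never has to be solved as a separate step: having chosen $m_0,\dots,m_{j-1}$, it sets
\[
L(j,\ell)=\widetilde\mu_{j+1}\cdots\widetilde\mu_\ell M[\ell],\qquad
N(j)=\widetilde\mu_j^{-1}m_{j-1}\cap\bigcap_{\ell>j}L(j,\ell),
\]
and shows $N(j)\ne\emptyset$ by linear compactness of $M[j]$; the finite intersection property holds because the $L(j,\ell)$ are nested by \Cref{lem-finite-covering-cases}(1) and $m_{j-1}\in N(j-1)\subseteq L(j-1,d)=\widetilde\mu_j L(j,d)$ for every $d$. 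Any $m_j\in N(j)$ then automatically satisfies $(m_j,m_{j-1})\in\widetilde\mu_j$ \emph{and} lies in every $L(j,\ell)$, so the next step goes through identically.

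Your $W_j^{(i)}$ route is workable but heavier: to show each $W_j^{(i)}$ is a closed linear variety you must control the ``reachable from $m$ in $i$ steps'' part, which is a forward image under a composed relation and so needs (1) and (2) iterated in both directions, with \Cref{rem-finite-intersections-of-closed-linear-varieties} at each stage. The paper's $N(j)$ sidesteps this by folding the already-made choices into the single closed linear variety $\widetilde\mu_j^{-1}m_{j-1}$, leaving only the forward-extension submodules $L(j,\ell)$---which are closed by repeated application of (2)---to be intersected.
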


\begin{proof}
For the proof of (1) and (2) one combines \Cref{lem-combining-lin-compact-with-relations-prelim}, \Cref{rem-converse-to-relations-prelim-lin-compact} and \Cref{defn-A-inf-reps}. 
It remains to show the equality holds. 
Given $m$ in the righthand-side with $(m_{i})\in \prod_{i\geq 0}M[i]$ such that $m_{i-1}\in \widetilde{\mu}_{i}m_{i}$ for each $i$, for each $\ell \in\mathbb{N}$ we have the following string of containments proving $m$ lies in the lefthand-side
\[
m=m_{0}\in \widetilde{\mu}_{1}m_{1}\subseteq \widetilde{\mu}_{1}\widetilde{\mu}_{2}m_{2}\subseteq\cdots\subseteq \widetilde{\mu}_{1}\dots \widetilde{\mu}_{\ell -1}m_{\ell -1}\subseteq \widetilde{\mu}_{1}\dots \widetilde{\mu}_{\ell }m_{\ell }\subseteq \widetilde{\mu}_{1}\dots \widetilde{\mu}_{\ell }M[\ell ]=\widetilde{\mu}_{\leq \ell }M[\ell ]
\]
It remains to show the lefthand-side is contained in the righthand-side of the required equality. 
Let 
\[
\begin{array}{cc}
L(j,\ell )\coloneqq \widetilde{\mu}_{j+1}\dots \widetilde{\mu}_{\ell }M[\ell ] \quad (\ell > j),
&
N(j)\coloneqq \widetilde{\mu}_{j}^{-1}m_{j-1}\cap \bigcap_{\ell >j} L(j,\ell )  \quad (j>0).
\end{array}
\]   
Let  $N(0)\coloneqq  \bigcap_{\ell >0} L(0,\ell )$ and note that $L(0,\ell )=\widetilde{\mu}_{\leq \ell }M[\ell ]$.  
For any $j\in \mathbb{N}$ note that $L(j,\ell )$ and $N(j)$ are subsets of $M[j]$ and that $L(j,\ell )$ is an $R$-submodule. 
Starting from setting $m_{0}\coloneqq m\in N(0)$, we shall construct $(m_{i})\in \prod_{i\geq 0}N(i)$  iteratively. 
So let $j>0$ and assume $m_{i}\in N(i)$ has been defined for $i=0,\dots,j-1$. 
We require $m_{j}\in N(j)$. 
It remains to prove that $N(j)\neq \emptyset$. 

By \Cref{lem-closure-lin-varieties-with-relations} we have that $N(j)$ is an intersection of closed linear varieties. 
Since $M[j]$ is linearly compact, it suffices to prove that these linear varieties have the finite intersection property. 
Let $S$ be a finite non-empty subset of integers $\ell >j$ and let $d\coloneqq \max S$. 
The intersection of the linear varieties $L(j-1,\ell )$ with $\ell \in S$ is  $\bigcap_{\ell \in S}L(j-1,\ell )=L(j-1, d)\ni 0$, and so it suffices to prove that $\widetilde{\mu}_{j}^{-1}m_{j-1}\cap L(j,d)\neq \emptyset$.  
Indeed, $m_{j-1}\in \widetilde{\mu}_{j}L(j,d)$ and so there is some $m'\in L(j,d)$ such that $m_{j-1}\in \widetilde{\mu}_{j}m'$  as required.  
\end{proof}

We now come to our adaptation of the covering property  \cite[Lemma~10.2]{Cra2018} to the language of relations.

\begin{thm}
\label{thm-main-3}
    Let $R$ be a $J$-adically complete noetherian semilocal ring.  
    Let $\mathscr{M}$ be a $\mathcal{B}$-diagram in $\lMod{R}$ such that $\mathscr{M}(\sigma)$ is finitely generated and such that   $\im (\mathscr{M}(\Rfloor_{\sigma}))\subseteq \ker(\mathscr{M}(\Rceil_{\sigma}))$ for each $\sigma\in\mathcal{B}_{0}$. 
    If $0\neq m\in \mathscr{M}(\emptyset)$ then there exists a ray $\overline{\sigma}$ in $\mathcal{B}$
such that exactly one of the following statements holds.
\begin{enumerate}
\item There exists $j\in \mathbb{N}$ such that $ m\in \widetilde{\mu}_{\leq j} \ker(\mathscr{M}(\Rceil_{\sigma^{j}}))\setminus \widetilde{\mu}_{\leq j}  \im (\mathscr{M}(\Rfloor_{\sigma^{j}}))$. 
\item There exists $(m_{i})\in \prod _{i\in \mathbb{N}}M[i]$ such that $m_{0}=m$ and $m_{i-1}\in \widetilde{\mu}_{i}m_{i}$ when $i>0$, but $m\notin \bigcup_{i\in \mathbb{N}}\widetilde{\mu}_{\leq i}0$. 
\end{enumerate}
\end{thm}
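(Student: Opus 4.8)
The plan is to combine the finitary weak covering result, \Cref{weak-covering-lemma}, with the compactness input of \Cref{lem-closure-lin-varieties-with-relations}. First I would observe that under the hypotheses here --- $R$ is $J$-adically complete, noetherian and semilocal, and each $\mathscr{M}(\sigma)$ is finitely generated --- \Cref{lem-compact-cont-haus-ainf-reps-exist} applies to any ray-representation of $\mathscr{M}$ (equipping $R$ and every $M[i]$ with the $J$-adic filtration), so that every ray-representation $(M[i],\mu_i)$ of $\mathscr{M}$ is compact-continuous-Hausdorff. This is the key extra ingredient not available in the setting of \Cref{weak-covering-lemma} alone.

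Next I would apply \Cref{weak-covering-lemma} to the given $0\neq m\in\mathscr{M}(\emptyset)$: since $\im(\mathscr{M}(\Rfloor_\sigma))\subseteq\ker(\mathscr{M}(\Rceil_\sigma))$ for all $\sigma$, there is a ray $\overline\sigma=(\sigma^i)$ for which exactly one of the two alternatives of that lemma holds. If alternative (1) of \Cref{weak-covering-lemma} holds, it is verbatim alternative (1) of the present theorem, and we are done --- except that we must also check alternative (2) of the present theorem then fails; but this follows because alternative (1) here implies alternative (1) of \Cref{weak-covering-lemma}, which is mutually exclusive with its alternative (2), and alternative (2) here implies (in particular) $m\in\bigcap_i\widetilde\mu_{\leq i}M[i]\setminus\bigcup_i\widetilde\mu_{\leq i}0$, i.e.\ alternative (2) of \Cref{weak-covering-lemma}.

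So the substantive case is when alternative (2) of \Cref{weak-covering-lemma} holds for the chosen ray: $m\in\bigcap_{i\in\mathbb{N}}\widetilde\mu_{\leq i}M[i]\setminus\bigcup_{i\in\mathbb{N}}\widetilde\mu_{\leq i}0$. Here I would invoke the ``Consequently'' clause of \Cref{lem-closure-lin-varieties-with-relations}: because $(M[i],\mu_i)$ is compact-continuous-Hausdorff, membership of $m$ in $\bigcap_{i\geq0}\widetilde\mu_{\leq i}M[i]$ is equivalent to the existence of a compatible sequence $(m_i)\in\prod_{i\geq0}M[i]$ with $m_0=m$ and $m_{i-1}\in\widetilde\mu_i m_i$ for all $i>0$. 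That is precisely the existence asserted in alternative (2) of the present theorem, and the condition $m\notin\bigcup_i\widetilde\mu_{\leq i}0$ is carried over directly from \Cref{weak-covering-lemma}(2). The mutual exclusivity in this case is exactly the mutual exclusivity already proved in \Cref{weak-covering-lemma}, since both alternatives here imply the corresponding alternatives there.

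The main obstacle --- really the only non-bookkeeping point --- is making sure the hypotheses of \Cref{lem-compact-cont-haus-ainf-reps-exist} are genuinely met along an arbitrary ray, i.e.\ that finite generation of each $\mathscr{M}(\sigma)$ is preserved and that the $J$-adic filtrations on source and target of each $\mu_i$ are the ones making continuity and the Hausdorff property hold; this is immediate from \Cref{rem-basic-comm-alg-properties}(\ref{rem-adic-cont-item}) and (\ref{rem-krull-item}), but it is the place where the completeness/noetherian/semilocal hypotheses on $R$ are actually consumed. Everything else is a matter of matching up the two alternatives of \Cref{weak-covering-lemma} with the two alternatives of the statement and transporting the already-established mutual exclusivity.
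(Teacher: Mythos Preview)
Your proposal is correct and matches the paper's own proof, which simply records that the theorem is the combination of \Cref{weak-covering-lemma}, \Cref{lem-compact-cont-haus-ainf-reps-exist} and \Cref{lem-closure-lin-varieties-with-relations}. You have spelled out the bookkeeping (mutual exclusivity, how the alternatives line up) more explicitly than the paper does, but the argument is identical.
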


\begin{proof}
This is the combination of \Cref{weak-covering-lemma}, \Cref{lem-compact-cont-haus-ainf-reps-exist} and \Cref{lem-closure-lin-varieties-with-relations}. 
\end{proof}

\begin{acknowledgements}
   This work was supported by the Deutsche Forschungsgemeinschaft (SFB-TRR 358/1 2023- 491392403). 
   The author is grateful for this support.
   The author is also grateful to Francesca Fedele and Rosie Laking for helpful and encouraging conversations.  
\end{acknowledgements}

\bibliography{biblio}

\begin{thebibliography}{10}

\bibitem{Hugel-abundance-of-silting}
L.~Angeleri~H\"ugel.
\newblock On the abundance of silting modules.
\newblock In {\em Surveys in representation theory of algebras}, volume 716 of
  {\em Contemp. Math.}, pages 1--23. Amer. Math. Soc., [Providence], RI, [2018]
  \copyright 2018.

\bibitem{auslander-smalo-preprojectives-over-artin-algs}
M.~Auslander and S.~O. Smal\o.
\newblock Preprojective modules over {A}rtin algebras.
\newblock {\em J. Algebra}, 66(1):61--122, 1980.

\bibitem{Bas1960}
H.~Bass.
\newblock Finitistic dimension and a homological generalization of semi-primary
  rings.
\newblock {\em Trans. Amer. Math. Soc.}, 95:466--488, 1960.

\bibitem{Bass-algebraic-K-theory}
H.~Bass.
\newblock {\em Algebraic {$K$}-theory}.
\newblock W. A. Benjamin, Inc., New York-Amsterdam, 1968.

\bibitem{Ben2018}
R.~{Bennett-Tennenhaus}.
\newblock {\em Functorial filtrations for semiperfect generalisations of gentle
  algebras}.
\newblock PhD thesis, University of Leeds,
  http://etheses.whiterose.ac.uk/19607/, 2018.

\bibitem{BenCra2018}
R.~{Bennett-Tennenhaus} and W.~{Crawley-Boevey}.
\newblock {$\Sigma$}-pure-injective modules for string algebras and linear
  relations.
\newblock {\em J. Algebra}, 513:177--189, 2018.

\bibitem{Bucur-Deleanu-intro}
I.~Bucur and A.~Deleanu.
\newblock {\em Introduction to the theory of categories and functors}.
\newblock A Wiley-Interscience Publication. John Wiley \& Sons, Ltd.,
  London-New York-Sydney, 1968.
\newblock With the collaboration of Peter J. Hilton and Nicolae Popescu, Pure
  and Applied Mathematics, Vol. XIX.

\bibitem{buhler-exact}
T.~B\"uhler.
\newblock Exact categories.
\newblock {\em Expo. Math.}, 28(1):1--69, 2010.

\bibitem{colpi-fuller-tilting-objects}
R.~Colpi and K.~R. Fuller.
\newblock Tilting objects in abelian categories and quasitilted rings.
\newblock {\em Trans. Amer. Math. Soc.}, 359(2):741--765, 2007.

\bibitem{crawley-boevey-inf-dim-reps}
W.~Crawley-Boevey.
\newblock Infinite-dimensional modules in the representation theory of
  finite-dimensional algebras.
\newblock In {\em Algebras and modules, {I} ({T}rondheim, 1996)}, volume~23 of
  {\em CMS Conf. Proc.}, pages 29--54. Amer. Math. Soc., Providence, RI, 1998.

\bibitem{Cra2018}
W.~{Crawley-Boevey}.
\newblock Classification of modules for infinite-dimensional string algebras.
\newblock {\em Trans. Amer. Math. Soc.}, 370(5):3289--3313, 2018.

\bibitem{Dickson-torsion-theory-for-abelian-cats}
S.~E. Dickson.
\newblock A torsion theory for {A}belian categories.
\newblock {\em Trans. Amer. Math. Soc.}, 121:223--235, 1966.

\bibitem{diestel-graph}
R.~Diestel.
\newblock {\em Graph theory}, volume 173 of {\em Graduate Texts in
  Mathematics}.
\newblock Springer-Verlag, Berlin, third edition, 2005.

\bibitem{enochs-covers}
E.~E. Enochs.
\newblock Injective and flat covers, envelopes and resolvents.
\newblock {\em Israel J. Math.}, 39(3):189--209, 1981.

\bibitem{fossum-griffith-reiten}
R.~Fossum, P.~Griffith, and I.~Reiten.
\newblock Trivial extensions of abelian categories and applications to rings:
  an expository account.
\newblock In {\em Ring theory ({P}roc. {C}onf., {P}ark {C}ity, {U}tah, 1971)},
  pages 125--151. Academic Press, New York-London, 1972.

\bibitem{Gobel-Simson-embeddings}
R.~G\"obel and D.~Simson.
\newblock Embeddings of {K}ronecker modules into the category of prinjective
  modules and the endomorphism ring problem.
\newblock {\em Colloq. Math.}, 75(2):213--244, 1998.

\bibitem{haghany-varadarajan-formal-triangular-matrix-ring}
A.~Haghany and K.~Varadarajan.
\newblock Study of modules over formal triangular matrix rings.
\newblock {\em J. Pure Appl. Algebra}, 147(1):41--58, 2000.

\bibitem{JenLen1989}
C.~Jensen and H.~Lenzing.
\newblock {\em Model-theoretic algebra with particular emphasis on fields,
  rings, modules}, volume~2 of {\em Algebra, Logic and Applications}.
\newblock Gordon and Breach Science Publishers, New York, 1989.

\bibitem{Kap1958}
I.~Kaplansky.
\newblock Projective modules.
\newblock {\em Ann. of Math (2)}, 68:372--377, 1958.

\bibitem{Lam1991}
T.~Lam.
\newblock {\em A first course in noncommutative rings}, volume 131 of {\em
  Graduate Texts in Mathematics}.
\newblock Springer-Verlag, New York, 1991.

\bibitem{mueller-marianne-rings-of-quotients}
M.~M\"uller.
\newblock Rings of quotients of generalized matrix rings.
\newblock {\em Comm. Algebra}, 15(10):1991--2015, 1987.

\bibitem{prest-model-theory-of-modules}
M.~Prest.
\newblock {\em Model theory and modules}, volume 130 of {\em London
  Mathematical Society Lecture Note Series}.
\newblock Cambridge University Press, Cambridge, 1988.

\bibitem{Prest-definable-and-monoidal}
M.~Prest.
\newblock Definable categories and monoidal categories.
\newblock In {\em Functor categories, model theory, algebraic analysis and
  constructive methods}, volume 450 of {\em Springer Proc. Math. Stat.}, pages
  197--222. Springer, Cham, [2024] \copyright 2024.

\bibitem{Pri2007}
P.~P\v{r}\'{\i}hoda.
\newblock Projective modules are determined by their radical factors.
\newblock {\em J. Pure Appl. Algebra}, 210(3):827--835, 2007.

\bibitem{quillen-higher-algebraic-K}
D.~Quillen.
\newblock Higher algebraic {$K$}-theory. {I}.
\newblock In {\em Algebraic {$K$}-theory, {I}: {H}igher {$K$}-theories ({P}roc.
  {C}onf., {B}attelle {M}emorial {I}nst., {S}eattle, {W}ash., 1972)}, volume
  Vol. 341 of {\em Lecture Notes in Math.}, pages 85--147. Springer, Berlin-New
  York, 1973.

\bibitem{rada-saorin-rings-characterised}
J.~Rada and M.~Saorin.
\newblock Rings characterized by (pre)envelopes and (pre)covers of their
  modules.
\newblock {\em Comm. Algebra}, 26(3):899--912, 1998.

\bibitem{Rin1975}
C.~M. Ringel.
\newblock The indecomposable representations of the dihedral {$2$}-groups.
\newblock {\em Math. Ann.}, 214:19--34, 1975.

\bibitem{rump-almost-abelian}
W.~Rump.
\newblock Almost abelian categories.
\newblock {\em Cahiers Topologie G\'eom. Diff\'erentielle Cat\'eg.},
  42(3):163--225, 2001.

\bibitem{Schneiders-quasi-abelian}
J.-P. Schneiders.
\newblock Quasi-abelian categories and sheaves.
\newblock {\em M\'em. Soc. Math. Fr. (N.S.)}, 76:vi+134, 1999.

\bibitem{Singh-balwant-basic-comm-alg}
B.~Singh.
\newblock {\em Basic commutative algebra}.
\newblock World Scientific Publishing Co. Pte. Ltd., Hackensack, NJ, 2011.

\bibitem{Stenstrom-maximal-ring-of-quotients}
B.~Stenstr\"om.
\newblock The maximal ring of quotients of a triangular matrix ring.
\newblock {\em Math. Scand.}, 34:162--166, 1974.

\bibitem{stenstrom-rings-of-quotients}
B.~Stenstr\"om.
\newblock {\em Rings of quotients}, volume Band 217 of {\em Die Grundlehren der
  mathematischen Wissenschaften}.
\newblock Springer-Verlag, New York-Heidelberg, 1975.
\newblock An introduction to methods of ring theory.

\bibitem{tattar-quasi-torion}
A.~Tattar.
\newblock Torsion pairs and quasi-abelian categories.
\newblock {\em Algebr. Represent. Theory}, 24(6):1557--1581, 2021.

\bibitem{telpy-homo-dimension}
M.~L. Teply.
\newblock Homological dimension and splitting torsion theories.
\newblock {\em Pacific J. Math.}, 34:193--205, 1970.

\bibitem{xu-flat-covers-of-modules}
J.~Xu.
\newblock {\em Flat covers of modules}, volume 1634 of {\em Lecture Notes in
  Mathematics}.
\newblock Springer-Verlag, Berlin, 1996.

\bibitem{Zel1953}
D.~Zelinsky.
\newblock Linearly compact modules and rings.
\newblock {\em Amer. J. Math.}, 75:79--90, 1953.

\end{thebibliography}
\bibliographystyle{abbrv}

\end{document}